\theoremstyle{plain}
\newtheorem{theorem}{Theorem}[section]
\newtheorem{corollary}[theorem]{Corollary}
\newtheorem{lemma}[theorem]{Lemma}
\newtheorem{conjecture}[theorem]{Conjecture}
\theoremstyle{definition}
\DeclareMathOperator{\diam}{diam}
\DeclareMathOperator{\trace}{Tr}
\begin{document}
\title{On total regularity of mixed graphs with order close to the Moore bound}
\author{
James Tuite\thanks{Open University, Milton Keynes, UK}\\ \texttt{\small james.tuite@open.ac.uk}
\and Grahame Erskine \footnotemark[1]\\ \texttt{\small grahame.erskine@open.ac.uk}
}

\date{}

\maketitle
\let\thefootnote\relax\footnote{Mathematics subject classification: 05C35}
\let\thefootnote\relax\footnote{Keywords: degree-diameter problem, mixed graphs, almost Moore graphs, total regularity}

\vspace*{-8ex}
\begin{abstract}\noindent
The undirected degree/diameter and degree/girth problems and their directed analogues have been studied for many decades in the search for efficient network topologies.  Recently such questions have received much attention in the setting of mixed graphs, i.e. networks that admit both undirected \emph{edges} and directed \emph{arcs}. The degree/diameter problem for mixed graphs asks for the largest possible order of a network with diameter $k$, maximum undirected degree $\leq r$ and maximum directed out-degree $\leq z$.  It is also of interest to find smallest possible $k$-geodetic mixed graphs with minimum undirected degree $\geq r$ and minimum directed out-degree $\geq z$.  A simple counting argument reveals the existence of a natural bound, the \emph{Moore bound}, on the order of such graphs; a graph that meets this limit is a \emph{mixed Moore graph}.  Mixed Moore graphs can exist only for $k = 2$ and even in this case it is known that they are extremely rare.  It is therefore of interest to search for graphs with order one away from the Moore bound.  Such graphs must be out-regular; a much more difficult question is whether they must be totally regular.  For $k = 2$, we answer this question in the affirmative, thereby resolving an open problem stated in a recent paper of L\'opez and Miret.  We also present partial results for larger $k$.  We finally put these results to practical use by proving the uniqueness of a 2-geodetic mixed graph with order exceeding the Moore bound by one.
\end{abstract}

\tikzset{middlearrow/.style={
        decoration={markings,
            mark= at position 0.9 with {\arrow[scale=2]{#1}} ,
        },
        postaction={decorate}
    }
}

\section{Introduction}

The degree/diameter and degree/girth problems have their roots in the design of efficient interconnection networks.  Applications include the design of computer architectures, planning transportation infrastructure and analysis of social networks.  The undirected degree/diameter problem asks for the maximum possible order of a graph with given maximum degree $d$ and diameter $k$.  The order of such a graph is bounded above by the \emph{Moore bound}
\[ M(d,k) = 1 + d + d(d-1) + d(d-1)^2 + \dots + d(d-1)^{k-1}.\]

The degree/girth problem asks for the smallest graph with minimum degree $d$ and girth $\geq 2k+1$.  For such graphs the Moore bound serves as a lower bound.

A graph attaining this bound is known as a \emph{Moore graph}.  A simple counting argument shows that a graph $G$ is Moore if and only if it is $d$-regular, has diameter $k$ and girth $2k+1$, or, equivalently, if and only if $G$ is $d$-regular and between any pair of vertices $u,v$ there is a unique path of length $\leq k$.  
It was shown by spectral methods in~\cite{BanIto,HofSin} that for $k \geq 2$ and $d \geq 3$ such a graph must have diameter $k = 2$ and degree $d\in \{3,7,57\}$.  There are unique Moore graphs corresponding to $d = 3$ and $7$, given by the Petersen graph and the Hoffman-Singleton graph respectively.  The existence of a Moore graph corresponding to $d=57$ is a famous open problem.

Given the scarcity of Moore graphs, it is of great interest to study graphs with similar structures.  One can search for graphs with maximum degree $d$, diameter $k$ and order $M(d,k)-\delta $ for small $\delta $; $\delta $ is known as the \emph{defect} of the graph.  Alternatively, one can ask for graphs with girth $g \geq 2k + 1$, minimum degree $d$ and order $M(d,k)+\epsilon $, where $\epsilon $ is the (hopefully small) \emph{excess} of the graph.  Unfortunately graphs with defect or excess one exist only trivially in the form of cycles~\cite{BanIto2,ErdFajHof,KurTsu}.  For more information on the history and development of the degree/diameter and degree/girth problems, see the surveys~\cite{ExoJaj,MilSir}.

For digraphs with maximum out-degree $d>1$ and diameter $k>1$ the Moore bound has an even simpler form:
\[M(d,k)= 1+d+d^2+\dots +d^k = \frac{d^{k+1}-1}{d-1}.\]
A digraph is Moore if and only if it is out-regular with degree $d$, has diameter $k$ and is $k$-geodetic, i.e. for any pair of vertices $u,v$ there is at most one directed walk from $u$ to $v$ with length $\leq k$.  It is well known~\cite{BriTou,PlesZnam} that there are no Moore digraphs of degree $d \geq 2$ and diameter $k \geq 2$.  There are digraphs with diameter $k = 2$ and defect $\delta = 1$ for every value of the maximum out-degree $d$~\cite{FioYebAle}; however, no such digraphs exist for diameters $k = 3$ or $4$~\cite{ConGimGonMirMor,ConGimGonMirMor2} or degrees $d = 2$ or $3$ and $k \geq 3$~\cite{BasMilSirSut,MilFri}.  $k$-geodetic digraphs with minimum out-degree $d$ and excess $\epsilon = 1$ do not exist for degree $d = 2$~\cite{Sil} or $k = 3$ or $4$ and $d \geq 2$ or $k = 2$ and $d \geq 8$~\cite{MilMirSil}.  In the directed problem, it is necessary to ask whether digraphs with order one away from the Moore bound are necessarily diregular; this was settled in the affirmative for defect one in~\cite{MilGimSirSla} and for excess one in~\cite{Sil}.

Recently, there has been much interest in these problems in the context of \emph{mixed} graphs, where we allow both undirected edges and directed arcs in the graph.  We can view the case of mixed graphs either as a generalisation of the undirected case (allowing arcs as well as edges) or as a specialisation of the directed case (where we insist that a number of the arcs must be present with their reverses).  The mixed Moore bound for undirected degree $r$ and directed out-degree $z$ is given by~\cite{BusAmiErsMilPer}
\[ M(r,z,k) = A\frac{u_1^{k+1}-1}{u_1-1}+B\frac{u_2^{k+1}-1}{u_2-1},\]
where  $v = (z+r)^2+2(z-r)+1$ and
\[ u_1 = \frac{z+r-1-\sqrt{v}}{2}, u_2 = \frac{z+r-1+\sqrt{v}}{2}\]
\[ A = \frac{\sqrt{v}-(z+r+1)}{2\sqrt{v}}, B = \frac{\sqrt{v}+(z+r+1)}{2\sqrt{v}}.\]
This serves as a natural upper bound for mixed graphs with maximum undirected degree $r$, maximum directed out-degree $z$ and diameter $k$, and as a lower bound for $k$-geodetic mixed graphs with minimum undirected degree $r$ and minimum directed out-degree $z$.  Note that we do not explicitly constrain the in-degree of any vertex.  It is known~\cite{NguMilGim} that no mixed Moore graph of diameter $k \geq 3$ can exist.  For diameter $k = 2$ the Moore bound is  
\[M(r,z,2)=(r+z)^2+z+1.\]
There is a mixed Moore graph with diameter $k = 2$ and undirected degree $r = 1$ for every value of the out-degree $z$; this graph is obtained by collapsing digons in the Kautz digraphs into edges~\cite{NguMilGim}.  Otherwise, just three sporadic mixed Moore graphs have been identified~\cite{Bos,Jor}.  For diameter $k = 2$, a strong necessary condition on the parameters $r$ and $z$ follows from analysis of the eigenvalues of the graph's adjacency matrix~\cite{Bos}.  However, there are infinitely many pairs $r,z$ for which the existence of a Moore graph is not known.  

A mixed graph with undirected degree $r$, directed out-degree $z$, diameter $k$ and defect $\delta = 1$ is known as an \emph{almost mixed Moore graph}.  Only one almost mixed Moore graph is known to exist~\cite{LopMir}; this graph has order $n = 10$ and parameters $r=2, z=1$ and $k = 2$ and is shown in Figure~\ref{fig:almost}.  This graph has the property of total regularity, i.e. its undirected subgraph is regular and its directed subgraph is diregular.  In~\cite{BusLopMir} it is shown that this is the unique almost mixed Moore graph with these parameters; the proof is complicated by the fact that any such graph could not be assumed to be totally regular.  These considerations prompted L\'opez and Miret to ask whether almost mixed Moore graphs with diameter two must be totally regular~\cite{LopMir}.  In this paper we answer this question in the affirmative, thereby greatly simplifying the search for almost mixed Moore graphs.  We also consider the case $r = z =1$ and larger diameters.

The existence of $k$-geodetic mixed graphs with undirected degree $\geq d$, directed out-degree $\geq z$ and order $M(r,z,k)+1$ is also an intriguing problem.  Conditions on $r$ and $z$ for the existence of such graphs for $k = 2$ are given in~\cite{Tui} and small $k$-geodetic mixed graphs are presented in \cite{TuiErs}, along with lower bounds on the order of totally regular $k$-geodetic mixed graphs.  We show that $k$-geodetic mixed graphs with undirected degree $\geq r$, directed out-degree $\geq z$ and excess one are totally regular for $k = 2$; and for $k \geq 3$ we prove total regularity for mixed graphs with directed out-degree $z = 1$.  We also present a $2$-geodetic mixed graph with $r = 2, z = 1$ and excess $\epsilon = 1$ and use our total regularity result to prove its uniqueness.

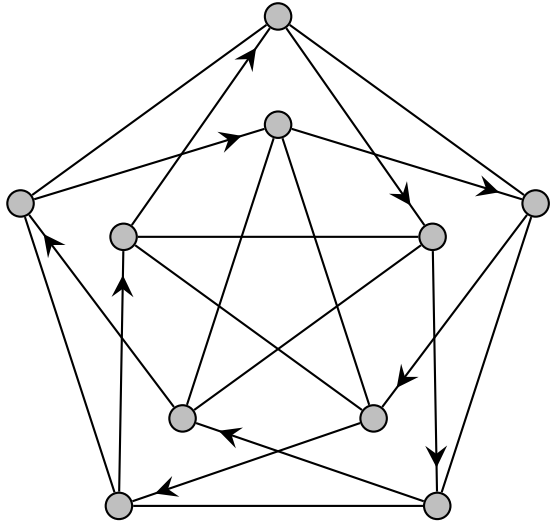
\begin{figure}
\centering
\begin{tikzpicture}[middlearrow=stealth,x=0.2mm,y=-0.2mm,inner sep=0.2mm,scale=1,thick,vertex/.style={circle,draw,minimum size=10,fill=lightgray}]
\node at (380,200) [vertex] (v1) {};
\node at (208.8,324.4) [vertex] (v2) {};
\node at (274.2,525.6) [vertex] (v3) {};
\node at (485.8,525.6) [vertex] (v4) {};
\node at (551.2,324.4) [vertex] (v5) {};
\node at (380,272) [vertex] (v6) {};
\node at (316.5,467.4) [vertex] (v7) {};
\node at (482.7,346.6) [vertex] (v8) {};
\node at (277.3,346.6) [vertex] (v9) {};
\node at (443.5,467.4) [vertex] (v10) {};
\path
	(v1) edge (v2)
	(v1) edge (v5)
	(v2) edge (v3)
	(v3) edge (v4)
	(v4) edge (v5)
	(v6) edge (v7)
	(v6) edge (v10)
	(v7) edge (v8)
	(v8) edge (v9)
	(v9) edge (v10)
	(v2) edge [middlearrow] (v6)
	(v6) edge [middlearrow] (v5)
	(v5) edge [middlearrow] (v10)
	(v10) edge [middlearrow] (v3)
	(v3) edge [middlearrow] (v9)
	(v9) edge [middlearrow] (v1)
	(v1) edge [middlearrow] (v8)
	(v8) edge [middlearrow] (v4)
	(v4) edge [middlearrow] (v7)
	(v7) edge [middlearrow] (v2)
	;
\end{tikzpicture}
\caption{A mixed almost Moore graph}
\label{fig:almost}
\end{figure}

\section{Total regularity of almost mixed Moore graphs with diameter two}

We begin by establishing our notation.  A mixed graph $G$ consists of a set $V(G)$ of vertices, a set $E(G)$ of unordered pairs of vertices called \emph{edges} and a set $A(G)$ of ordered pairs called \emph{arcs}.  If $(u,v) \in E$, then we write $u \sim v$; similarly $(u,v) \in A$ is denoted by $u \to v$.  We do not allow loops or parallel edges or arcs; hence for no vertex $u$ can we have $u \sim u$ or $u \to u$, and $u \sim v$ and $u \to v$ cannot hold simultaneously.  The undirected subgraph $G^U$ and directed subgraph $G^Z$ of a mixed graph $G$ are the subgraphs induced by $E(G)$ and $A(G)$ respectively.  The distance $d(u,v)$ between vertices $u$ and $v$ of $G$ is the length of a shortest path in $G$ from $u$ to $v$; observe that we can have $d(u,v) \not = d(v,u)$.  The diameter of $G$ is defined to be $\diam(G)=\max\{ d(u,v) : u,v \in V(G)\} $.  The distance from $u$ to $v$ in the undirected and directed subgraphs will be written as $d_U(u,v)$ and $d_Z(u,v)$ respectively; we will leave these quantities undefined if $u$ and $v$ lie in separate components of $G^U$ or $G^Z$.  $G$ will always be assumed to be a proper mixed graph, i.e. both $E(G)$ and $A(G)$ are non-empty.  A \emph{walk} $u_0u_1u_2\dots u_t$ of length $t$ in a mixed graph $G$ is an alternating sequence of vertices of $G$ such that for $0 \leq i \leq t-1$ we have $u_i \sim u_{i+1}$ or $u_i  \to u_{i+1}$.  The walk is \emph{non-backtracking} if it does not contain a sub-walk $u_i\sim u_{i+1} \sim u_i$.  We will colloquially call a mixed non-backtracking walk a \emph{mixed path}. $G$ is \emph{$k$-geodetic} if for any ordered pair of (not necessarily distinct) vertices $u, v$ there is at most one mixed path from $u$ to $v$ with length $\leq k$.    

The undirected degree of a vertex $u$ will be denoted by $d(u)$, the directed out-degree of $u$ by $d^+(u)$ and the directed in-degree of $u$ by $d^-(u)$.  A mixed graph is \emph{out-regular} if $d(u) = d(v)$ and $d^+(u) = d^+(v)$ for all vertices $u,v$.  A graph is \emph{totally regular} if it is out-regular and $d^-(u) = d^+(u)$ for every vertex $u$.  Hence a mixed graph $G$ is totally regular if and only if $G^U$ is regular and $G^Z$ is diregular.  

For given maximum directed out-degree $z$, the sets $S$ and $S'$ are defined by
\[ S = \{ v \in V(G) : d^-(v) < z\};\quad S' = \{ v' \in V(G) : d^-(v') > z\} .\]
Informally, $S$ and $S'$ are the sets of vertices with `too small' in-degree and `too large' in-degree respectively.  The undirected and directed neighbourhoods of a vertex $u$ are defined by
\[ U(u) = \{ v \in V(G) : u \sim v\} , Z^+(u) = \{ v \in V(G) : u \to v\}, Z^-(u) = \{ v \in V(G) : v \to u\} .\]
For convenience, we also set
\[ N^+(u) = U(u) \cup Z^+(u) , N^-(u) = U(u) \cup Z^-(u).\]
Thus $N^+(u)$ is the set of vertices that can be reached by a path of length one from $u$ and $N^-(u)$ is the set of vertices that can reach $u$ by a path of length one.  

A graph with maximum undirected degree $r$, maximum directed out-degree $z$, diameter $k$ and defect $\delta $ (and hence order $n = M(r,z,k) - \delta$) is a $(r,z,k;-\delta)$-graph.  An $(r,z,k;0)$-graph is thus a mixed Moore graph and a $(r,z,k;-1)$-graph is an almost mixed Moore graph.  We recall some basic properties of mixed Moore and almost mixed Moore graphs.  To derive the Moore bound for a graph with undirected degree $\leq r$, directed out-degree $\leq z$ and diameter $k = 2$, we pick an arbitrary vertex $u$ and draw a \emph{Moore tree} (as illustrated in Figure~\ref{fig:mooretree} in the case $r=3,z=3$).  In the case of a mixed Moore graph, all paths of lengths $\leq 2$ from the root vertex $u$ reach distinct vertices, and a simple count yields the Moore bound of $M(r,z,2) = (r+z)^2+z+1$.

\begin{figure}
	\centering
	\begin{tikzpicture}[middlearrow=stealth,x=0.2mm,y=-0.2mm,inner sep=0.1mm,scale=2.1,
	thick,vertex/.style={circle,draw,minimum size=10,font=\small,fill=lightgray},every label/.style={font=\scriptsize}]
	
	\node at (200,0) [vertex,label=above:{$0$}] (v0) {};
	
	\node at (25,100) [vertex,label=left:{$1$}] (v1) {};
	\node at (95,100) [vertex,label=left:{$2$}] (v2) {};
	\node at (165,100) [vertex,label=left:{$3$}] (v3) {};
	\node at (235,100) [vertex,label=left:{$4$}] (v4) {};
	\node at (305,100) [vertex,label=left:{$5$}] (v5) {};
	\node at (375,100) [vertex,label=left:{$6$}] (v6) {};
	
	\node at (0,200) [vertex,label=below:{$7$}] (v11) {};
	\node at (10,200) [vertex,label=below:{$8$}] (v12) {};
	\node at (20,200) [vertex,label=below:{$9$}] (v13) {};
	\node at (30,200) [vertex,label=below:{$10$}] (v14) {};
	\node at (40,200) [vertex,label=below:{$11$}] (v15) {};
	\node at (50,200) [vertex,label=below:{$12$}] (v16) {};
	\node at (70,200) [vertex,label=below:{$13$}] (v21) {};
	\node at (80,200) [vertex,label=below:{$14$}] (v22) {};
	\node at (90,200) [vertex,label=below:{$15$}] (v23) {};
	\node at (100,200) [vertex,label=below:{$16$}] (v24) {};
	\node at (110,200) [vertex,label=below:{$17$}] (v25) {};
	\node at (120,200) [vertex,label=below:{$18$}] (v26) {};
	\node at (140,200) [vertex,label=below:{$19$}] (v31) {};
	\node at (150,200) [vertex,label=below:{$20$}] (v32) {};
	\node at (160,200) [vertex,label=below:{$21$}] (v33) {};
	\node at (170,200) [vertex,label=below:{$22$}] (v34) {};
	\node at (180,200) [vertex,label=below:{$23$}] (v35) {};
	\node at (190,200) [vertex,label=below:{$24$}] (v36) {};
	\node at (210,200) [vertex,label=below:{$25$}] (v41) {};
	\node at (220,200) [vertex,label=below:{$26$}] (v42) {};
	\node at (230,200) [vertex,label=below:{$27$}] (v43) {};
	\node at (240,200) [vertex,label=below:{$28$}] (v44) {};
	\node at (250,200) [vertex,label=below:{$29$}] (v45) {};
	\node at (280,200) [vertex,label=below:{$30$}] (v51) {};
	\node at (290,200) [vertex,label=below:{$31$}] (v52) {};
	\node at (300,200) [vertex,label=below:{$32$}] (v53) {};
	\node at (310,200) [vertex,label=below:{$33$}] (v54) {};
	\node at (320,200) [vertex,label=below:{$34$}] (v55) {};
	\node at (350,200) [vertex,label=below:{$35$}] (v61) {};
	\node at (360,200) [vertex,label=below:{$36$}] (v62) {};
	\node at (370,200) [vertex,label=below:{$37$}] (v63) {};
	\node at (380,200) [vertex,label=below:{$38$}] (v64) {};
	\node at (390,200) [vertex,label=below:{$39$}] (v65) {};
	
	\path
	(v0) edge [middlearrow] (v1)
	(v0) edge [middlearrow] (v2)
	(v0) edge [middlearrow] (v3)
	(v0) edge (v4)
	(v0) edge (v5)
	(v0) edge (v6)
	(v1) edge [middlearrow] (v11)
	(v1) edge [middlearrow] (v12)
	(v1) edge [middlearrow] (v13)
	(v1) edge (v14)
	(v1) edge (v15)
	(v1) edge (v16)
	(v2) edge [middlearrow] (v21)
	(v2) edge [middlearrow] (v22)
	(v2) edge [middlearrow] (v23)
	(v2) edge (v24)
	(v2) edge (v25)
	(v2) edge (v26)
	(v3) edge [middlearrow] (v31)
	(v3) edge [middlearrow] (v32)
	(v3) edge [middlearrow] (v33)
	(v3) edge (v34)
	(v3) edge (v35)
	(v3) edge (v36)
	(v4) edge [middlearrow] (v41)
	(v4) edge [middlearrow] (v42)
	(v4) edge [middlearrow] (v43)
	(v4) edge (v44)
	(v4) edge (v45)
	(v5) edge [middlearrow] (v51)
	(v5) edge [middlearrow] (v52)
	(v5) edge [middlearrow] (v53)
	(v5) edge (v54)
	(v5) edge (v55)
	(v6) edge [middlearrow] (v61)
	(v6) edge [middlearrow] (v62)
	(v6) edge [middlearrow] (v63)
	(v6) edge (v64)
	(v6) edge (v65)
	;
	\end{tikzpicture}
	\caption{The Moore tree for $z=3,r=3,k=2$}
	\label{fig:mooretree}
\end{figure}
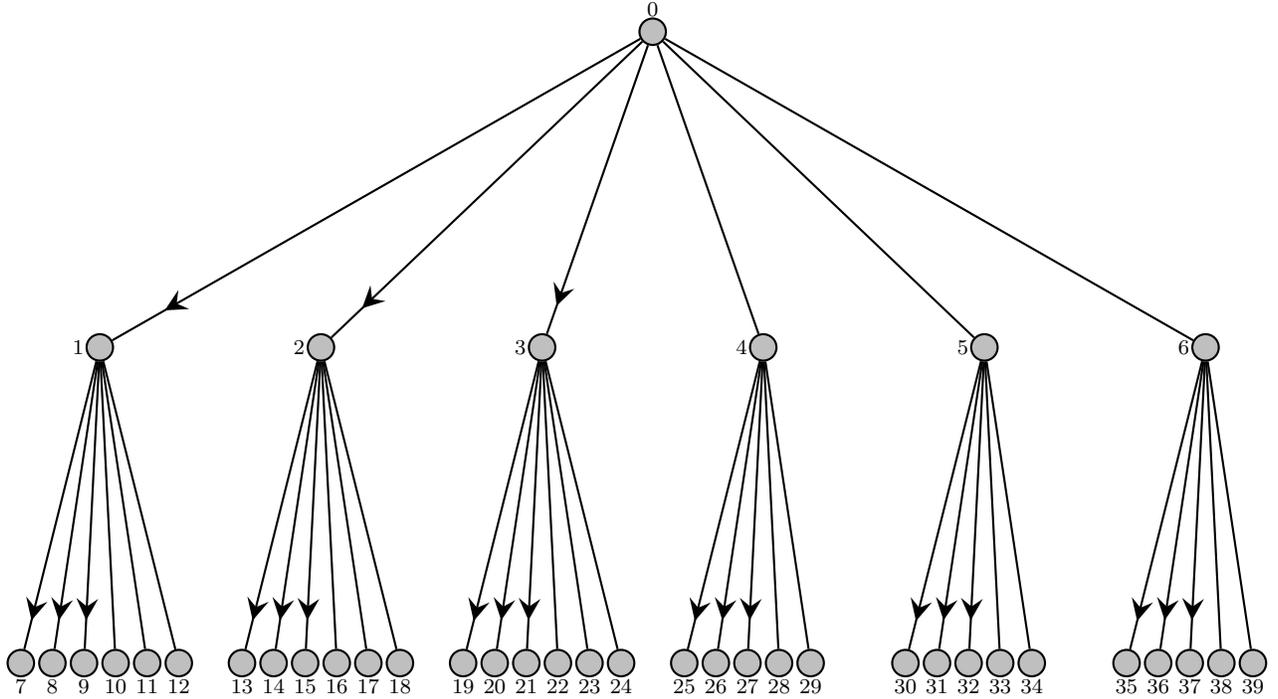

In the case of an almost mixed Moore graph $G$, i.e. a graph with defect $\delta = 1$ and hence order $n = (r+z)^2+z$, for each vertex $u$ there must be exactly one vertex $v$ that appears twice in the Moore tree rooted at $u$; we write $v = r(u)$ and call $v$ the \emph{repeat} of $u$.  The repeat function is an automorphism of $G$ if and only if $G$ is totally regular~\cite{LopMir}. If $W$ is a subset of $V(G)$ we denote the set of repeats of vertices in $W$ by $r(W)$.

We will now proceed to show that any $(r,z,2;-1)$-graph is totally regular.  Suppose that $G$ is an $(r,z,2;-1)$-graph that is not totally regular.  Our strategy is to use a purely combinatorial argument to deduce structural information about $G$ and then apply spectral theory to obtain a contradiction.  First we show that $G$ must be out-regular.

\begin{lemma}\label{lem:outregular}
$G$ is out-regular with undirected degree $r$ and directed out-degree $z$.
\end{lemma}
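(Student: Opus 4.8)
The plan is to use a double-counting argument based on the defect. Since $G$ is an $(r,z,2;-1)$-graph of order $n=(r+z)^2+z$, fix any vertex $u$ and draw the Moore tree rooted at $u$. If $u$ had undirected degree strictly less than $r$ or directed out-degree strictly less than $z$, then the number of vertices reachable from $u$ by a mixed path of length $\leq 2$ would be at most $M(r,z,2)-2 = n-1$, so at least one vertex is unreachable from $u$, contradicting $\diam(G) = 2$. Hence $d(u) = r$ and $d^+(u) = z$ for every vertex $u$, which is exactly out-regularity with the claimed degrees. The only subtlety is making the counting in the Moore tree airtight: reducing either $d(u)$ or $d^+(u)$ by one removes a whole branch (or sub-branch) of length-one and length-two paths, and I would verify that even after accounting for the single repeat permitted by defect one, a vertex still fails to be covered.

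First I would set up the count precisely: from $u$ there are $d(u)$ length-one edge-paths and $d^+(u)$ length-one arc-paths; from each first-level vertex $w$ reached via an edge there are (at most) $r+z-1$ non-backtracking length-two extensions, and from each $w$ reached via an arc there are (at most) $r+z$ extensions. So the total number of (not necessarily distinct) endpoints of mixed paths of length $\leq 2$ from $u$ is at most
\[ 1 + d(u) + d^+(u) + d(u)(r+z-1) + d^+(u)(r+z), \]
and if $(d(u),d^+(u)) = (r,z)$ this equals $M(r,z,2) = (r+z)^2+z+1$. Next I would observe that since $G$ has diameter $2$, every one of the $n = M(r,z,2)-1$ vertices must appear at least once among these endpoints, so the number of "collisions" (endpoints counted with multiplicity minus distinct vertices covered) is at most $M(r,z,2) - n = 1$ when $(d(u),d^+(u))=(r,z)$, consistent with the single repeat $r(u)$.

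Then I would run the contrapositive: if $d(u) \leq r-1$ or $d^+(u) \leq z-1$, the displayed bound drops by at least
\[ \min\{\, 1 + (r+z-1),\ 1 + (r+z)\,\} = r+z \geq 2 \]
(here using that $G$ is proper, so $r,z \geq 1$ and $r+z \geq 2$), giving at most $M(r,z,2)-2 = n-1$ endpoints with multiplicity, hence at most $n-1$ distinct vertices covered — so some vertex lies at distance $>2$ from $u$, the desired contradiction. I expect the main (minor) obstacle to be bookkeeping the non-backtracking condition correctly when the first step is an edge versus an arc, and confirming that the degree deficit of a single vertex cannot be "hidden" by the defect: since the defect budget is exactly one and a genuine degree deficiency costs at least $r+z \geq 2$ in the count, there is no way to reconcile the two. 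A secondary point worth a sentence is that this argument shows simultaneously that $d(u)=r$ and $d^+(u)=z$ for all $u$, so no separate argument is needed to rule out, say, $d(u)=r$ but $d^+(u)<z$.
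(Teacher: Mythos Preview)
Your proof is correct and follows exactly the same approach as the paper: both argue that a deficiency in either $d(u)$ or $d^+(u)$ causes the Moore tree rooted at $u$ to lose at least $r+z \geq 2$ leaves, forcing defect at least two. The paper's version is simply a one-line sketch of your more detailed count.
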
 
\begin{proof}
Suppose that for some vertex $u$, $d(u) \leq r-1$ or $d^+(u) \leq z-1$.  Drawing the Moore tree rooted at $u$,  it is evident that the defect of $G$ is at least $r+z \geq 2$.  
\end{proof}  

We now prove two fundamental lemmas that show the relationship between the sets $S$, $S'$, out-neighbourhoods and repeats.  The result that the repeat function is an automorphism for totally regular almost mixed Moore graphs can be viewed as a `limiting case' of these results.

\begin{lemma}\label{lem:fundamental}
If $v \in S$, then $d^-(v) = z-1$ and for all $u \in V(G)$ we have $S \subseteq N^+(r(u))$.
\end{lemma}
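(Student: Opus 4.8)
The plan is to compress the defect hypothesis into a single identity for the adjacency matrix, differentiate it once by commuting with $A$, and then read off the lemma from a short size argument.

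First I would record the matrix form of the Moore bound. Let $A$ be the adjacency matrix of $G$, so $A_{uw}=1$ exactly when $u\sim w$ or $u\to w$. By Lemma~\ref{lem:outregular} the graph is out-regular, so the Moore tree rooted at any vertex has exactly $M(r,z,2)=n+1$ nodes; since $\diam(G)\le 2$ every vertex occurs in it, hence exactly one vertex occurs twice. In particular $r(\cdot)$ is a well-defined map on $V(G)$, and one checks in passing that $r(u)=u$ iff $u$ lies on a digon. Now count mixed paths of length at most $2$ from $u$ to $w$: the length-$0$, length-$1$ and length-$2$ contributions are $[w=u]$, $A_{uw}$ and $(A^2)_{uw}$, with the correction that the only backtracking walks of length $2$ are the $r$ walks $u\sim x\sim u$, which occur only when $w=u$. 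Equating this with $1+[w=r(u)]$ gives, for all $u,w$,
\[ A_{uw}+(A^2)_{uw}=1+[w=r(u)]+(r-1)[w=u]. \]
Introducing the $0$--$1$ matrix $P$ with $P_{uw}=1$ iff $w=r(u)$, and $I,J$ the identity and all-ones matrices, this says $A^2+A+(1-r)I=J+P$.

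Next I would commute $A$ with this identity. Since $A$ commutes with $A^2+A+(1-r)I$, it commutes with $J+P$, so $AP-PA=JA-AJ$. Out-regularity gives $AJ=(r+z)J$, while the $(u,v)$ entry of $JA$ is the column sum $\sum_k A_{kv}=d^-(v)+r$. Comparing $(u,v)$ entries, and using $(AP)_{uv}=|N^+(u)\cap r^{-1}(v)|$ and $(PA)_{uv}=A_{r(u)v}=[\,v\in N^+(r(u))\,]$, we obtain
\[ [\,v\in N^+(r(u))\,]-\bigl|N^+(u)\cap r^{-1}(v)\bigr|=z-d^-(v) \qquad\text{for all }u,v\in V(G). \]
The lemma then follows immediately: if $v\in S$ then $z-d^-(v)\ge 1$, so for every $u$ the identity forces $[\,v\in N^+(r(u))\,]\ge 1$; as an indicator is at most $1$, we must have $[\,v\in N^+(r(u))\,]=1$, $|N^+(u)\cap r^{-1}(v)|=0$ and $z-d^-(v)=1$. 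The last equality is $d^-(v)=z-1$, and the first says $v\in N^+(r(u))$ for every $u$; since $v\in S$ was arbitrary this is exactly $S\subseteq N^+(r(u))$ for all $u$. (As a bonus the middle equality, with $\bigcup_u N^+(u)=V(G)$ since $r\ge 1$, yields $r^{-1}(v)=\emptyset$, i.e.\ no vertex of $S$ is a repeat, which will streamline later arguments.)

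The one step needing care is the first: getting the constant $(1-r)I$ right by handling the diagonal (non-backtracking and digon) terms correctly, and confirming that defect exactly one, out-regularity and $\diam(G)\le 2$ together make $r(\cdot)$ single-valued. Everything after that is formal matrix algebra; in particular no eigenvalue information is used, so the argument stays within the combinatorial half of the overall strategy, with the spectral input reserved for the final contradiction.
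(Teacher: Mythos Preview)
Your proof is correct, and the bookkeeping on the diagonal term is right: the only length-$2$ walks that are not mixed paths are the $r$ backtracks $u\sim x\sim u$, so $A^2+A+(1-r)I=J+P$ holds in general (the paper only writes this identity after establishing $r=2$, where it becomes $I+A+A^2=J+2I+P$). The commutator step $AP-PA=JA-AJ$ and the entrywise reading are sound, and the inequality $[v\in N^+(r(u))]-|N^+(u)\cap r^{-1}(v)|\ge 1$ indeed forces all three equalities you claim.

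The route is genuinely different from the paper's. The paper argues combinatorially by pigeonhole on the Moore tree: for $v\in S$ and any $u$, each of the $r+z$ branches must contain an in-neighbour of $v$, but $v$ has at most $r+z-1$ in-neighbours, so some in-neighbour is repeated and hence $v\in N^+(r(u))$; defect one then forces $d^-(v)=z-1$. Your argument packages the same information into the single identity $[v\in N^+(r(u))]-|N^+(u)\cap r^{-1}(v)|=z-d^-(v)$ by commuting $A$ with the defect equation. The paper's version is shorter and requires no setup beyond drawing the tree; your version is more uniform (no case split on $d(u,v)$), delivers the companion statement $r^{-1}(v)=\varnothing$ for $v\in S$ in the same breath, and dovetails nicely with the spectral endgame since the matrix $J+P$ is already in hand. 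Either style proves Lemma~\ref{lem:fundamental2} by the same mechanism with the sign of $z-d^-(v)$ reversed.
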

\begin{proof}
Let $v \in S$ and $u \in V(G)$.  Consider the Moore tree rooted at $u$.  Suppose that $d(u,v) = 2$.  As each vertex of $N^+(u)$ can reach $v$ by paths of length $\leq 2$, each branch of the Moore tree must contain an element of $N^-(v)$.  However, there are $r + z$ branches, whereas $v$ has $\leq r+z-1$ in-neighbours, so at least one in-neighbour must be repeated in the tree.  Hence $v$ is an out-neighbour of $r(v)$.  Evidently $d^-(v) = z-1$, for otherwise more than one in-neighbour of $v$ would be repeated and the defect of $G$ would be at least two.  The cases $u \sim v, u \to v$ and $u = v$ can be dealt with similarly. 
\end{proof}

\begin{corollary}\label{cor:averageindegree}
$\displaystyle\sum _{v' \in S' }(d^-(v')-z) = \sum _{v \in S} (z - d^-(v)) = |S|$.
\end{corollary}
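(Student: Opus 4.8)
The plan is to reduce everything to a double-counting of arcs. By Lemma~\ref{lem:outregular}, $G$ is out-regular with directed out-degree $z$, so if $n = |V(G)|$ then the number of arcs of $G$ is exactly $\sum_{u \in V(G)} d^+(u) = nz$. On the other hand, every arc contributes $1$ to exactly one in-degree, so $\sum_{v \in V(G)} d^-(v) = nz$ as well. Subtracting, I would conclude that $\sum_{v \in V(G)} \bigl(d^-(v) - z\bigr) = 0$.

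Next I would split this sum according to the definitions of $S$ and $S'$. Vertices $v$ with $d^-(v) = z$ contribute $0$; vertices in $S$ contribute the negative quantity $d^-(v) - z = -(z - d^-(v))$; and vertices in $S'$ contribute the positive quantity $d^-(v') - z$. Hence $\sum_{v' \in S'}(d^-(v') - z) - \sum_{v \in S}(z - d^-(v)) = 0$, which gives the first claimed equality $\sum_{v' \in S'}(d^-(v') - z) = \sum_{v \in S}(z - d^-(v))$.

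Finally, for the last equality I would invoke Lemma~\ref{lem:fundamental}, which asserts that every $v \in S$ satisfies $d^-(v) = z - 1$; thus each term $z - d^-(v)$ equals $1$, and the sum over $S$ is simply $|S|$. Combining the two displayed identities completes the argument. There is no real obstacle here: the statement is a bookkeeping corollary of out-regularity together with the in-degree computation in Lemma~\ref{lem:fundamental}, and the only point requiring a word of care is that the middle term of the $V(G)$-sum (vertices with in-degree exactly $z$) vanishes, so that $S$ and $S'$ account for the entire discrepancy.
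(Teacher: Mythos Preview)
Your proof is correct and follows essentially the same approach as the paper: the paper simply notes that out-regularity forces the average directed in-degree to be $z$ (your double-counting of arcs), and then applies Lemma~\ref{lem:fundamental} for the final equality. Your version just spells out the arithmetic more explicitly.
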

\begin{proof}
By Lemma~\ref{lem:outregular}, the average directed in-degree must be $z$.  The final equality follows from Lemma~\ref{lem:fundamental}.  
\end{proof}

\begin{lemma}\label{lem:fundamental2}
For all $u \in V(G)$ we have $S' \subseteq r(N^+(u))$.
\end{lemma}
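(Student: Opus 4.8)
The plan is to mirror the proof of Lemma~\ref{lem:fundamental}, the difference being that a vertex of $S'$ has \emph{too many}, rather than too few, in-neighbours. Fix $u \in V(G)$ and $v' \in S'$; the goal is to produce a vertex $w \in N^+(u)$ with $r(w) = v'$. Draw the Moore tree rooted at $u$: it has $r+z$ branches, the $i$-th hanging from a vertex $w_i \in N^+(u)$, and since $G$ has defect one, $r(u)$ is the unique vertex occurring twice in the whole tree; in particular each vertex occurs at most once inside any single branch (using also that $G$ has no loops or parallel edges or arcs).

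The key elementary step is: if some branch $B_i$ whose head $w_i$ is not $v'$ contains two \emph{distinct} in-neighbours $x_1,x_2$ of $v'$, then $r(w_i)=v'$. Indeed, at most one of $x_1,x_2$ equals $w_i$, so we obtain two distinct mixed paths of length $\leq 2$ from $w_i$ to $v'$ (namely $w_i,x_j,v'$ when $x_j$ is a child of $w_i$, or the length-one path $w_i,v'$ when $x_j=w_i$), and these are non-backtracking precisely because $w_i\neq v'$; hence $v'$ occurs twice in the Moore tree rooted at $w_i$, forcing $r(w_i)=v'$ and so $v'\in r(N^+(u))$.

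It remains to locate such a branch. Since $\diam(G)=2$, every in-neighbour of $v'$ occurs in the tree, and $|N^-(v')| = r + d^-(v') \geq r+z+1$ as $v'\in S'$. If $d(u,v')=2$ then $u\notin N^-(v')$, so these $\geq r+z+1$ distinct vertices all lie in the $r+z$ branches; by the pigeonhole principle some branch contains two of them, its head is automatically different from $v'$ (because $v'\notin N^+(u)$), and the previous paragraph finishes the job.

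The hard part will be the remaining cases $d(u,v')\leq 1$, i.e.\ $v'\in N^+(u)$ or $v'=u$, which I expect to be the main obstacle. If $v'\in N^+(u)$, exactly one branch is headed by $v'$ itself, and although a short Moore-tree count shows $|N^+(u)\cap N^-(v')|\leq 1$ (so at most two in-neighbours of $v'$ lie within distance one of $u$), the branch headed by $v'$ can legitimately absorb further in-neighbours of $v'$ — namely the edge-neighbours of $v'$ — so the crude pigeonhole no longer forces two in-neighbours of $v'$ into a branch not headed by $v'$. I would first dispose of the possibility that $v'$ lies on arcs $v'\to x$ and $x\to v'$, since then $v'$ reappears at depth two in its own Moore tree, whence $r(v')=v'$ and, as $v'\in N^+(u)$, we are done immediately; in the complementary situation one needs a finer accounting of how many in-neighbours of $v'$ can be children of $v'$ as opposed to children of the other $r+z-1$ branch heads — again exploiting $|N^+(u)\cap N^-(v')|\leq 1$ — in order to push two in-neighbours of $v'$ into a common branch not headed by $v'$. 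The companion case $v'=u$ is treated in the same spirit, completing the argument as for Lemma~\ref{lem:fundamental}.
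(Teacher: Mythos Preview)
Your treatment of the case $d(u,v')=2$ is correct and matches the paper's argument exactly. However, you substantially overestimate the difficulty of the remaining cases, which the paper dismisses as ``similar''---and they really are. Your proposal leaves these cases as an unfinished sketch, and the sketch misses the one observation that makes them go through.

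The case $u=v'$ is in fact the \emph{easiest}, not a hard companion case: since $v'$ cannot be its own in-neighbour, all $\geq r+z+1$ in-neighbours of $v'$ lie at levels~1 or~2 of the Moore tree rooted at $v'$, hence in the $r+z$ branches. No branch head equals $v'$, so every occurrence of an in-neighbour $x$ in branch $w_j$ yields a genuine non-backtracking path from $w_j$ to $v'$. Pigeonhole immediately gives a branch with two such occurrences, and since distinct positions in a branch are distinct vertices, $r(w_j)=v'$.

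For $v'\in N^+(u)$, after you correctly dispose of the digon sub-case, the missing idea is this: every \emph{undirected} branch head $w_j$ (i.e.\ $u\sim w_j$) already possesses the path $w_j\sim u - v'$ to $v'$, because $u\in N^-(v')$. Hence if any \emph{directed} in-neighbour of $v'$ lands in an undirected branch, that branch head has two paths to $v'$ and we are done. Otherwise the directed in-neighbours of $v'$ other than $u$---of which there are at least $z+1$ when $u\sim v'$, and at least $z$ when $u\to v'$---are confined (absent digons) to the directed branches not headed by $v'$, of which there are respectively $z$ or $z-1$. Pigeonhole again finishes. Your ``finer accounting'' never needed the bound $|N^+(u)\cap N^-(v')|\leq 1$; it needed only to separate directed in-neighbours from undirected ones and directed branches from undirected ones.
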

\begin{proof}
Let $v' \in S', u \in V(G)$.  Suppose that $d(u,v') = 2$.  Then $u \not \in N^-(v')$.  There are $r+z$ branches of the Moore tree at $u$, but $v'$ has $\geq r+z+1$ in-neighbours, so at least one branch contains more than one in-neighbour of $v'$, so that $v' \in r(N^+(u))$.  The remaining cases are similar. 
\end{proof}

Lemmas~\ref{lem:fundamental} and~\ref{lem:fundamental2} not only yield important information on the structure of $G$, but also show that the order of both sets $S,S'$ is bounded above by $r+z$.  A counting argument will now allow us to ascertain the exact size of $S$.

\begin{lemma}
$|S| = r+z$.
\end{lemma}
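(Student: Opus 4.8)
The plan is to fix a vertex whose in‑degree is too small and count, in two different ways, the mixed paths of length $\leq 2$ that terminate at it. Since $G$ is out‑regular by Lemma~\ref{lem:outregular} but, by our standing assumption, not totally regular, the average directed in‑degree is $z$, and hence $S$ is non‑empty; fix $v \in S$. By Lemma~\ref{lem:fundamental} we have $d^-(v) = z-1$, so $|N^-(v)| = d(v) + d^-(v) = r + z - 1$.

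Now I would carry out the double count. First, because $G$ has diameter $2$, every vertex reaches $v$ by a mixed path of length $\leq 2$ (a shortest walk never backtracks), and since the defect is exactly one, from each $u \in V(G)$ there are exactly two such paths when $v = r(u)$ and exactly one otherwise. Writing $c = |\{u \in V(G) : r(u) = v\}| \geq 0$, the number of mixed paths of length $\leq 2$ ending at $v$ is thus $n + c$. Second, I count the same paths by length: there is one of length $0$, there are $|N^-(v)| = r+z-1$ of length $1$, and a path of length $2$ ending at $v$ has the form $u\,x\,v$ with $x \in N^-(v)$ and $u \in N^-(x)$; the only such walks that backtrack are $v\,x\,v$ with $v \sim x$, one for each of the $r$ undirected neighbours of $v$ (no backtrack arises through an in‑arc‑neighbour $x$, since $x \to v$ excludes $x \sim v$). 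Using out‑regularity, the number of length‑$2$ mixed paths ending at $v$ is $\sum_{x \in N^-(v)} |N^-(x)| - r = \sum_{x \in N^-(v)}(r + d^-(x)) - r$. Equating the two counts and substituting $n = (r+z)^2 + z$ and $|N^-(v)| = r+z-1$, the terms not involving the $d^-(x)$ cancel and one is left with
\[ c \;=\; \sum_{x \in N^-(v)}\bigl(d^-(x) - z\bigr)\;-\;(r+z). \]

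Since $c \geq 0$ this gives $\sum_{x \in N^-(v)}(d^-(x) - z) \geq r+z$. Conversely, discarding the non‑positive summands and then invoking Corollary~\ref{cor:averageindegree},
\[ \sum_{x \in N^-(v)}\bigl(d^-(x) - z\bigr) \;\leq\; \sum_{\substack{x \in N^-(v)\\ d^-(x) > z}}\bigl(d^-(x) - z\bigr) \;\leq\; \sum_{v' \in S'}\bigl(d^-(v') - z\bigr) \;=\; |S|, \]
because every $x$ with $d^-(x) > z$ lies in $S'$. Hence $|S| \geq r+z$, and since Lemmas~\ref{lem:fundamental} and~\ref{lem:fundamental2} already force $|S| \leq r+z$, we conclude $|S| = r+z$.

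The one delicate point is the structural count of length‑$2$ paths: one must verify that exactly one backtracking walk $v\,x\,v$ is removed for each undirected neighbour $x$ of $v$ and that no other length‑$2$ walk ending at $v$ backtracks, which is precisely where the rule that an edge and an arc cannot join the same pair of vertices is used. A variant that avoids the explicit enumeration is to argue, as in the proof of Lemma~\ref{lem:fundamental}, with the Moore tree rooted at a vertex at distance $2$ from $v$ and track how many of its $r+z$ branches meet $N^-(v)$.
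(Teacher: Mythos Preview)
Your proof is correct and follows essentially the same route as the paper's: both fix $v\in S$, count mixed paths of length $\leq 2$ terminating at $v$, and compare with $n$ to obtain $|S|\geq r+z$, then combine with the upper bound from Lemma~\ref{lem:fundamental}. Your version is slightly more explicit in that you track the exact path count $n+c$ (with $c=|\{u:r(u)=v\}|\geq 0$) rather than passing directly to the inequality $n\leq\ldots$ as the paper does, but the underlying idea and the use of Corollary~\ref{cor:averageindegree} to bound $\sum_{x\in N^-(v)}(d^-(x)-z)$ by $|S|$ are the same.
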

\begin{proof}
Let $v \in S$.  By Lemma~\ref{lem:fundamental} we have $d^-(v) = z-1$.  We obtain an upper bound on the number of vertices that are initial points of paths of length $\leq 2$ that terminate at $v$ by assuming that $S' \subseteq N^-(v)$.  As $G$ has diameter two, this yields by Corollary \ref{cor:averageindegree}   
\[ n \leq 1+r+(z-1)+r(r+z-1)+(z-1)(r+z)+|S|. \]
Rearranging, $|S| \geq r+z$.  Combined with the result of Lemma~\ref{lem:fundamental}, we see that $|S| = r+z$. 
\end{proof}

\begin{corollary}\label{cor:neighbourhood}
$S = N^+(r(u))$ for all $u \in V(G)$.
\end{corollary}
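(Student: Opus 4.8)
The plan is to combine the two bounds we now have on $|S|$ with the containment from Lemma~\ref{lem:fundamental}. Fix an arbitrary vertex $u$. By Lemma~\ref{lem:fundamental} we have $S \subseteq N^+(r(u))$, and we have just shown $|S| = r+z$. On the other hand, $r(u)$ has undirected degree $r$ and directed out-degree $z$ by Lemma~\ref{lem:outregular}, so $|N^+(u)| \leq |U(r(u))| + |Z^+(r(u))| = r + z$; in fact equality holds in $|N^+(r(u))| = r+z$ since $U$ and $Z^+$ are disjoint (no vertex is both an edge-neighbour and an out-neighbour). Therefore $S \subseteq N^+(r(u))$ with both sets of size exactly $r+z$, which forces $S = N^+(r(u))$.

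So the argument is essentially a one-line squeeze: a subset relation between two finite sets of equal cardinality is an equality. The only things that need to be assembled are (i) $|S| = r+z$, which is the preceding lemma; (ii) $S \subseteq N^+(r(u))$ for every $u$, which is the second assertion of Lemma~\ref{lem:fundamental}; and (iii) $|N^+(w)| = r+z$ for every vertex $w$ — in particular for $w = r(u)$ — which follows from out-regularity (Lemma~\ref{lem:outregular}) together with the fact that the sets $U(w)$ and $Z^+(w)$ are disjoint, a consequence of the standing assumption that $u \sim v$ and $u \to v$ cannot hold simultaneously.

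I do not anticipate any genuine obstacle here; the corollary is a bookkeeping consequence of the lemmas already proved. The only point worth stating carefully is that $N^+(r(u))$ really does have size $r+z$ and not merely at most $r+z$, so that the inclusion of the $(r+z)$-element set $S$ is forced to be an equality rather than a proper one. I would therefore write the proof as: ``By Lemma~\ref{lem:fundamental}, $S \subseteq N^+(r(u))$ for every $u \in V(G)$. By Lemma~\ref{lem:outregular} the vertex $r(u)$ has undirected degree $r$ and out-degree $z$, and since no vertex can be simultaneously joined to $r(u)$ by an edge and by an arc, $|N^+(r(u))| = r+z$. As $|S| = r+z$ by the previous lemma, the inclusion $S \subseteq N^+(r(u))$ must be an equality.'' This is short enough that no separate ``key steps'' breakdown is needed beyond the three ingredients listed above.
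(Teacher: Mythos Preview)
Your argument is correct and is exactly the reasoning the paper intends: the corollary is stated without proof immediately after the lemma $|S|=r+z$, since combining that with $S\subseteq N^+(r(u))$ from Lemma~\ref{lem:fundamental} and $|N^+(r(u))|=r+z$ from out-regularity forces equality. (One minor typo: in your first paragraph you write $|N^+(u)|$ where you mean $|N^+(r(u))|$.)
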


We say that a vertex $w$ is a \emph{repeat} in $G$ if there exists a vertex $u$ such that $r(u) = w$.  The preceding corollary allows us to determine both the value of the undirected degree $r$ and the number of distinct repeats in $G$.

\begin{lemma}\label{lem:2repeats}
$r = 2$ and there are exactly two repeats.
\end{lemma}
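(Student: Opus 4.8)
Write $R=\{\,r(u):u\in V(G)\,\}$ for the set of repeats and let $M_0$ denote the set of vertices of $G$ with in-degree exactly $z$. The plan is to turn the neighbourhood description of Corollary~\ref{cor:neighbourhood} into a rigid combinatorial picture of $G$ by a two-way count of short paths, and then to pin down the numerical parameters by a global count. Two facts come for free. By Corollary~\ref{cor:neighbourhood}, $N^+(w)=S$ for every $w\in R$, so $R\subseteq\bigcap_{v\in S}N^-(v)$; since $d^-(v)=z-1$ for $v\in S$ by Lemma~\ref{lem:fundamental} and $G$ is out-regular (Lemma~\ref{lem:outregular}), each such $v$ has $|N^-(v)|=r+z-1$, whence $|R|\le r+z-1$ and no repeat lies in $S$. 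Also, applying Lemma~\ref{lem:fundamental2} with $u$ a repeat and using $N^+(u)=S$ gives $S'\subseteq r(S)\subseteq R$, and $S'\ne\emptyset$ because the common value in Corollary~\ref{cor:averageindegree} is $|S|=r+z\ge1$. Hence $R=S'\sqcup(R\cap M_0)$, and Corollary~\ref{cor:averageindegree} yields $\sum_{w\in R}d^-(w)=z\,|R|+(r+z)$.

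The heart of the argument is a path count performed at every vertex $x$. Since $G$ has diameter $2$ and defect $1$, each vertex of $G$ reaches $x$ by exactly one mixed path of length $\le2$, except the vertices of $r^{-1}(x)$, which reach it by two; so the number of mixed paths of length $\le2$ terminating at $x$ equals $n+|r^{-1}(x)|$. Counting the same paths directly --- splitting the length-two paths according to whether the final step is an arc or an edge, the latter carrying a backtracking correction --- and using out-regularity, one obtains a linear identity expressing $\sum_{y\in N^-(x)}d^-(y)$ through $d^-(x)$ and $|r^{-1}(x)|$. Three specialisations do the work. Taking $x=v\in S$ gives $\sum_{y\in N^-(v)}d^-(y)=z(r+z)+r$; since $R\subseteq N^-(v)$ contributes $z|R|+(r+z)$ to this sum and each vertex of $N^-(v)\setminus R$ has in-degree in $\{z-1,z\}$ (it is not in $S'$, as $S'\subseteq R$), every vertex of $N^-(v)\setminus R$ must have in-degree exactly $z$; consequently $N^-(v)\cap S=\emptyset$ for all $v\in S$, so $S$ spans no edges and no arcs, and $N^-(v)$ is the disjoint union of $R$ with $r+z-1-|R|$ vertices of $M_0\setminus R$. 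Taking $x$ of in-degree $z$ gives: if $x\notin R$ then $x$ has exactly one in-neighbour in $S$; if $x\in R$ then $x$ has no in-neighbour in $S$ and $|r^{-1}(x)|=1$. Taking $x=v'\in S'$ gives $|N^-(v')\cap S|=(r+z+1)\bigl(d^-(v')-z\bigr)+1-|r^{-1}(v')|$, and since $0\le|N^-(v')\cap S|\le|S|=r+z$ this forces $|r^{-1}(v')|\ge2$.

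To conclude I would inspect the Moore tree rooted at a repeat $w$. Its level $1$ is $S$ and, because $S$ is independent, its level $2$ lies in $V\setminus S$; hence the unique repeated vertex $r(w)$ either equals $w$ --- which requires a digon from $w$ into $S$, so that $w$ has an in-neighbour in $S$ --- or occurs twice at level $2$, in which case $r(w)$ again has an in-neighbour in $S$. As a repeat with an in-neighbour in $S$ must lie in $S'$ (by the two specialisations above), this shows $r(R)\subseteq S'$. Feeding this into the accounting $\sum_{w\in R}|r^{-1}(w)|=n$ (with $|r^{-1}(w)|=1$ for $w\in R\cap M_0$ and $|r^{-1}(w)|\ge2$ for $w\in S'$), together with $\sum_{v'\in S'}\bigl(d^-(v')-z\bigr)=r+z$, the exact values of $|N^-(v')\cap S|$, and the $S$-to-$S'$ incidence count forced by $N^+(v)\subseteq S'\sqcup(M_0\setminus R)$ for $v\in S$, one is left with so little slack against $|R|\le r+z-1$ that $|R|=2$ and there are exactly two repeats; the decomposition $r+z-1=|N^-(v)|$ into $|R|$ repeats plus in-degree-$z$ in-neighbours of $v$, combined with the fibre arithmetic, then forces $r=2$.

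The step I expect to be genuinely delicate is this last one: the first-moment identities produced by the path count turn out to be mutually consistent, so excluding $|R|=1$ and $|R|\ge3$ (equivalently $r\ne2$) cannot follow from any single averaging estimate and must instead be squeezed out of the exact fibre sizes $|r^{-1}(w)|$ and the two-sided bound $0\le|N^-(v')\cap S|\le r+z$, perhaps together with a closer analysis of how the map $r$ permutes the repeats inside $S'$.
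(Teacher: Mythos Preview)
Your proposal builds a great deal of correct machinery but, as you yourself acknowledge in the final paragraph, does not close the argument: you end with ``one is left with so little slack \dots\ that $|R|=2$'' and then concede that the first-moment identities are mutually consistent and that excluding $|R|=1$ and $|R|\ge 3$ ``cannot follow from any single averaging estimate''. That is the gap, and it is not merely cosmetic: the counting identities you derive (for instance $\sum_{v'\in S'}|N^-(v')\cap S|=r+|R|$, which follows from your formulas) really are compatible with a range of values of $|R|$ and $r$, so no amount of bookkeeping with fibre sizes alone will finish the job.

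What you are missing is a one-line structural observation that the paper exploits directly. Take any two distinct repeats $R_1,R_2$. By Corollary~\ref{cor:neighbourhood} you have $N^+(R_1)=N^+(R_2)=S$; in particular $U(R_2)\subseteq S=N^+(R_1)$. Hence for each of the $r$ vertices $w\in U(R_2)$ there is a mixed path $R_1\to w\sim R_2$ or $R_1\sim w\sim R_2$ of length two. These $r$ paths are distinct, so if $r\ge 3$ then $R_2$ appears at least three times in the Moore tree rooted at $R_1$, contradicting $\delta=1$. The case $r=1$ is killed by parity ($G^U$ would be a perfect matching but $n=z^2+3z+1$ is odd), so $r=2$. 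With $r=2$ the two paths force $r(R_1)=R_2$, and symmetrically $r(R_2)=R_1$; a third repeat $R_3$ would give $r(R_1)=R_3$ as well, which is impossible. Finally, $|R|\ge 2$ because a unique repeat $R$ would satisfy $r(R)=R$, forcing a digon $R\to u\to R$ with $u\in S$, whence $u$ is also a repeat. This replaces your entire path-counting apparatus.
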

\begin{proof}
Suppose that there is only one repeat, call it $R$.  As $R$ is the only repeat, we must have $r(R) = R$.  It is easily seen that there must be $u \in N^+(R)$ such that $R \to u \to R$.  But $u$ now lies in a 2-cycle and hence is a repeat, contradicting our hypothesis.  Hence $G$ has at least two repeats, call them $R_1$ and $R_2$.

If $r = 1$, then $G$ contains a perfect matching, so that $G$ must have even order, whereas $|V(G)| = z^2+3z+1$ would be odd.  Suppose that $r \geq 3$.  By Corollary~\ref{cor:neighbourhood}, $N^+(R_1) = N^+(R_2) = S$.  $R_1$ thus has at least $r \geq 3$ paths of length two to $R_2$, as shown in Figure \ref{fig:r3}, contradicting $\delta = 1$. Therefore $r = 2$.  By the foregoing reasoning, $r(R_1) = R_2$ and $r(R_2) = R_1$.  If there were a third repeat $R_3$, this argument could be repeated with $R_3$ in place of $R_2$ to give $r(R_1) = R_3$, so that $R_2 = R_3$, a contradiction.  Therefore there are exactly two repeats $R_1$ and $R_2$.    
\end{proof}

\begin{figure}\centering
	\begin{tikzpicture}[middlearrow=stealth,x=0.2mm,y=-0.2mm,inner sep=0.1mm,scale=1.35,
	thick,vertex/.style={circle,draw,minimum size=10,font=\small,fill=lightgray},every label/.style={font=\small}]
	
	\node at (200,0) [vertex,label=above:{$R_1$}] (v0) {};
	\node at (25,100) [vertex,label=left:{$v_1$}] (v1) {};
	\node at (95,100) [vertex,label=left:{$v_2$}] (v2) {};
	\node at (165,100) [vertex,label=left:{$v_3$}] (v3) {};
	\node at (235,100) [vertex,label=left:{$v_4$}] (v4) {};
	\node at (305,100) [vertex,label=left:{$v_5$}] (v5) {};
	\node at (375,100) [vertex,label=left:{$v_6$}] (v6) {};
	\node at (200,200) [vertex,label=below:{$R_2$}] (v7) {};

	\path
	(v0) edge [middlearrow] (v1)
	(v0) edge [middlearrow] (v2)
	(v0) edge [middlearrow] (v3)
	(v0) edge (v4)
	(v0) edge (v5)
	(v0) edge (v6)
	(v7) edge (v1)
	(v7) edge (v2)
	(v7) edge [middlearrow] (v3)
	(v7) edge [middlearrow] (v4)
	(v7) edge [middlearrow] (v5)
	(v7) edge (v6)
	;
	\end{tikzpicture}
	\caption{If $r \geq 3$}
	\label{fig:r3}
\end{figure}
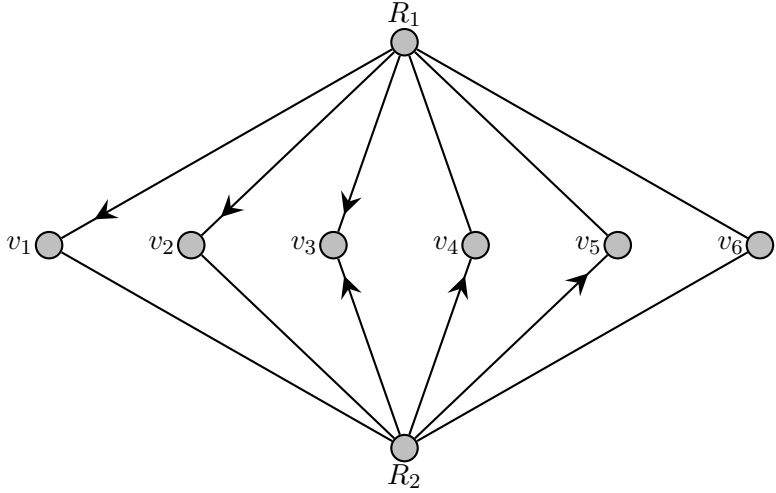

This provides us with all of the structural information necessary to complete our proof.  We now adopt a spectral approach.

\begin{theorem}\label{main theorem}
Almost mixed Moore graphs with diameter two are totally regular.
\end{theorem}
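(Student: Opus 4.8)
The plan is to combine the combinatorial scaffolding already assembled with an eigenvalue argument for the adjacency matrix $\mathbf{A}$ of $G$. By Lemmas~\ref{lem:outregular}--\ref{lem:2repeats} and Corollary~\ref{cor:neighbourhood} we know that $G$ is out-regular with $r = 2$, that there are exactly two repeats $R_1, R_2$ with $r(R_1) = R_2$, $r(R_2) = R_1$, and that $N^+(R_1) = N^+(R_2) = S$ with $|S| = z + 2$. Since the only defect is the double occurrence of the repeat, for every vertex $u$ the Moore-tree identity reads $\mathbf{I} + \mathbf{A} + \mathbf{A}^2 = \mathbf{J} + \mathbf{P}$ as an equation of operators on $\mathbb{C}^{V(G)}$, where $\mathbf{J}$ is the all-ones matrix and $\mathbf{P}$ is the $0$--$1$ matrix encoding the repeat relation: $\mathbf{P}_{u,v} = 1$ exactly when $v$ appears twice in the Moore tree at $u$. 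The first step is to pin down $\mathbf{P}$ precisely: Corollary~\ref{cor:neighbourhood} forces the \emph{column} support of $\mathbf{P}$ to be $\{R_1, R_2\}$ (only repeats occur as repeats), and a counting/symmetry argument shows every row of $\mathbf{P}$ has exactly one $1$, so $\mathbf{P}$ is the characteristic matrix of a function $V(G) \to \{R_1, R_2\}$; in particular $\mathbf{P}$ has rank $2$, with row sums all equal to $1$ but column sums $|r^{-1}(R_1)|$ and $|r^{-1}(R_2)|$ which need not be equal — this asymmetry is exactly the obstruction to total regularity and is what we must kill.

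Next I would extract spectral consequences. Apply the matrix identity to the all-ones vector $\mathbf{1}$: since $G$ is out-regular, $\mathbf{A}\mathbf{1} = (r+z)\mathbf{1}$, so the left side gives $(1 + (r+z) + (r+z)^2)\mathbf{1}$, while $\mathbf{J}\mathbf{1} = n\mathbf{1}$ and $\mathbf{P}\mathbf{1} = \mathbf{1}$ (row sums one); this recovers $n = (r+z)^2 + z$ consistently and gives no contradiction yet. The real leverage comes from restricting to the orthogonal complement of $\mathbf{1}$ and, more importantly, from $\mathbf{P}$ not being symmetric. I would compute $\trace(\mathbf{A}^2) = 2|E(G)| + (\text{number of digons})$ in two ways, and more decisively take traces in the identity after multiplying by suitable matrices. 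Because $\mathbf{A}$ has real spectrum on its symmetric part but is not normal in general, the cleaner route is: the identity shows $\mathbf{A}^2 + \mathbf{A} + \mathbf{I} - \mathbf{P}$ has rank $1$ (it equals $\mathbf{J}$), so every eigenvalue $\lambda$ of $\mathbf{A}$ on the hyperplane $\mathbf{1}^\perp$, restricted appropriately, satisfies a quadratic obtained from $\lambda^2 + \lambda + 1 = (\text{eigenvalue of }\mathbf{P})$; and the eigenvalues of $\mathbf{P}$, being those of a rank-$2$ idempotent-like stochastic-by-rows matrix, are $1$ (multiplicity $1$, eigenvector $\mathbf{1}$) and $0$ (multiplicity $n-1$) — \emph{unless} the two fibres have unequal size, in which case $\mathbf{P}$ still has eigenvalues $0$ and $1$ but fails to be diagonalizable orthogonally, injecting a nontrivial nilpotent/Jordan block that is incompatible with $\mathbf{A}^2 + \mathbf{A} + \mathbf{I}$ being a polynomial in the (diagonalizable on $\mathbf{1}^\perp$) operator $\mathbf{A}$ plus a rank-one piece.

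Concretely, on $\mathbf{1}^\perp$ we get $\mathbf{A}^2 + \mathbf{A} + \mathbf{I} = \mathbf{P}$, and since $\mathbf{P}$ has the single eigenvalue $0$ there (its image is spanned by the characteristic vector of $\{R_1,R_2\}$-fibres, which lies in $\mathbf{1}^\perp$ only after projection), $\mathbf{P}|_{\mathbf{1}^\perp}$ is nilpotent of index $\le 2$; hence $(\mathbf{A}^2 + \mathbf{A} + \mathbf{I})^2 = \mathbf{0}$ on a codimension-one subspace, forcing the minimal polynomial of $\mathbf{A}$ on $\mathbf{1}^\perp$ to divide $(x^2 + x + 1)^2$. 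Thus every eigenvalue of $\mathbf{A}$ other than $r+z$ is a primitive cube root of unity $\omega$ or $\bar\omega$. Now impose integrality/trace conditions: $\trace(\mathbf{A}) = 0$ (no loops) gives $(r+z) + a\,\omega + b\,\bar\omega + (\text{contributions from Jordan blocks}) = 0$ with $a + b = n - 1$ counting algebraic multiplicities, forcing $a = b$ and $r + z = -\,a(\omega + \bar\omega) = a$; combined with $n = (r+z)^2 + z$ and $a + b = 2a = n-1$ we get $2(r+z) = (r+z)^2 + z - 1$, a Diophantine constraint that, together with $r = 2$, pins $z$ to at most one value — and then the Jordan-block count from the unequal-fibre scenario must be zero (a genuine Jordan block would make $\trace(\mathbf{A}^j)$ non-integral or violate the count), which is precisely the statement that the two fibres $r^{-1}(R_1), r^{-1}(R_2)$ have equal size, i.e. $d^-(R_1) = d^-(R_2)$, i.e. $G$ is totally regular. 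The main obstacle I anticipate is handling the non-normality of $\mathbf{A}$ rigorously — one cannot simply diagonalize, so the argument must be phrased via minimal polynomials and rational canonical form, and one must be careful that the rank-one correction $\mathbf{J}$ and the non-symmetric $\mathbf{P}$ interact only on the $\mathbf{1}$-line; establishing that clean splitting (that $\mathbf{1}$ is simultaneously an eigenvector of $\mathbf{A}$, $\mathbf{J}$, and $\mathbf{P}$, so the whole identity block-diagonalizes as $1 \oplus (n-1)$) is the linchpin, after which the cube-root-of-unity dichotomy and the trace bookkeeping finish the proof.
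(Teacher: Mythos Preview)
Your spectral strategy is the right idea, but three concrete errors derail it. First, the matrix identity is off by $rI$: the diagonal entry $(A^2)_{u,u}$ counts the $r=2$ backtracking walks $u\sim w\sim u$ along edges, which are invisible in the (non-backtracking) Moore tree, so the correct relation is $I+A+A^2 = J+2I+P$. The relevant quadratic on the quotient by $\langle\mathbf{1}\rangle$ is therefore $\lambda^2+\lambda-1$, whose roots are $\tfrac{-1\pm\sqrt5}{2}$, not primitive cube roots of unity. Second, $\sigma(P)\neq\{1,0^{(n-1)}\}$: since $r(R_1)=R_2$ and $r(R_2)=R_1$, the $2\times 2$ block of $P$ on the $R_1,R_2$ rows and columns is the swap $\left(\begin{smallmatrix}0&1\\1&0\end{smallmatrix}\right)$, and $P$ is block lower-triangular with this as its only nonzero diagonal block; hence $\sigma(P)=\{1,-1,0^{(n-2)}\}$, \emph{independently of the fibre sizes} $m_i=|r^{-1}(R_i)|$. (Note also that $\mathbf{1}^\perp$ is invariant under neither $A$ nor $P$ --- indeed $A^T\mathbf{1}$ is the in-degree vector, nonconstant by hypothesis --- so ``restrict to $\mathbf{1}^\perp$'' must be replaced by ``pass to the quotient $\mathbb{C}^n/\langle\mathbf{1}\rangle$''.)

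Third, and most seriously, the endgame targets the wrong statement. You aim to prove $m_1=m_2$ and then assert this is equivalent to total regularity; neither implication is substantiated, and in fact the fibre sizes play no role in the contradiction. The paper instead computes directly that $J+2I+P$ has spectrum $\{n+3,\,1,\,2^{(n-2)}\}$ (valid for arbitrary $m_1,m_2$), so exactly one eigenvalue of $A$ satisfies $\lambda^2+\lambda-(n+2)=0$ (giving $\lambda\in\{z+2,-z-3\}$), exactly one satisfies $\lambda^2+\lambda=0$, and the remaining $n-2$ satisfy $\lambda^2+\lambda-1=0$. Rationality of $\trace A=0$ forces the two irrational roots $\tfrac{-1\pm\sqrt5}{2}$ to occur with equal multiplicity, after which none of the four possible values of $\lambda_1+\lambda_2$ makes the trace vanish for $z\geq 1$. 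That is the entire contradiction; no Jordan-block analysis or fibre comparison is needed.
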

\begin{proof}
Suppose that there are $m_1$ vertices with repeat $R_1$ and $m_2$ vertices with repeat $R_2$; we shall call vertices with repeat $R_i$ \emph{Type i}.  Let us label the vertices of $G$ $u_1,u_2,\dots , u_n$, so that $u_1 = R_1, u_2 = R_2$, $u_{2+s}$ is Type 2 for $1 \leq s \leq m_2-1$ and $u_{1+m_2+t}$ is Type 1 for $1 \leq t \leq m_1-1$.  If $A$ is the adjacency matrix of $G$, the $(i,j)$-entry of the sum $I + A + A^2$ is the number of distinct walks of length $\leq 2$ from vertex $u_i$ to vertex $u_j$.  Hence
\[ I + A + A^2 = J + 2I + P,\]
where $I$ is the $n \times n$ identity matrix, $J$ is the $n \times n$ all-one matrix and $P$ is the matrix with entries given by 
\begin{equation}
  P_{ij}=\begin{cases}
    1, & \text{if $r(u_i) = u_j$}.\\
    0, & \text{otherwise}.
  \end{cases}
\end{equation}
Observe that all non-zero entries of $P$ occur in the first two columns.  Inspection shows that the matrix $J+P$ has the following eigenvalues:  
\begin{enumerate}[label=\roman*)]
	\item eigenvalue $n+1$ with multiplicity one.\newline
	Corresponding eigenvector: all-one vector of length $n$.
	\item eigenvalue $-1$ with multiplicity one.\newline 
	Corresponding eigenvector: 
	\begin{equation}
		f(u_j)=
			\begin{cases}
				1, & \text{if $u_j$ is Type 1 },\\
				-(m_1+1)/(m_2+1), & \text{if $u_j$ is Type 2 }.
			\end{cases}
	\end{equation} 
	
	\item eigenvalue $0$ with multiplicity $n-2$.\newline 
	Corresponding eigenvectors:
	\begin{equation}
		f_i(u_j)=
			\begin{cases}
				1, & \text{if $j = i+2$},\\
				-1, & \text{if $j=n$}, \\
				0, & \text{otherwise}. 
			\end{cases}
	\end{equation} 
	for $1 \leq i \leq n-3$ and
	\begin{equation}
		g(u_j)=
			\begin{cases}
				1, & \text{if $j = 1$ or $2$},\\
				-3, & \text{if $j = n$}, \\
				0, & otherwise.
			\end{cases}
	\end{equation} 
	
\end{enumerate}

It follows that $I+A+A^2$ has spectrum $\{ n+3, 1, 2^{(n-2)}\} $, so that $G$ must have spectrum $\sigma (G) = \{ \lambda_1 , \dots , \lambda_n\} $, where 
\begin{itemize}
\item $\lambda _1$ is a solution of $\lambda ^2 + \lambda -(n+2) = 0$
\item $\lambda _2$ is a solution of $\lambda ^2 + \lambda = 0$
\item $\lambda _i$ is a solution of $\lambda ^2 + \lambda - 1 = 0$ for $3 \leq i \leq n$.
\end{itemize}
The solutions of the first equation are $\lambda _1 = \frac{-1 \pm \sqrt{4n+9}}{2}$.  By Lemma~\ref{lem:2repeats}, the order of $G$ is $n = z^2+5z+4$, so
$4n+9 = 4z^2+20z+25 = (2z+5)^2$, yielding $\lambda _1 = z+2$ or $-z-3$.  Trivially $\lambda _2 = 0$ or $-1$.  Finally the third equation has solutions $\frac{-1 \pm \sqrt{5}}{2}$.  

$G$ has no loops, so the trace of its adjacency matrix is $\trace(A) = 0$.  It follows that the sum of the eigenvalues of $G$ is also zero.  In order that this sum be rational, one half of the eigenvalues $\lambda _3, \dots , \lambda _n$ must take the plus sign and half the negative sign.  The sum of the eigenvalues $\lambda_3, \dots , \lambda _n$ is thus $-(n/2)+ 1$.  Depending upon the values of $\lambda _1, \lambda _2$, this leaves us with four possibilities for the sum of the eigenvalues, namely $z+3-(n/2)$, $z+2-(n/2)$, $-z-2-(n/2)$ and $-z-3-(n/2)$.  The final two are clearly strictly negative, whilst the first two yield no feasible solutions for $z>0$.
\end{proof}

\section{Total regularity of almost mixed Moore graphs with $r = z = 1$ and diameter $k \geq 3$}

The problem of the total regularity of almost mixed Moore graphs with diameter $k \geq 3$ is difficult and we consider it here only in the case $r = z = 1$.  Let $G$ be a $(1,1,k;-1)$-graph that is not totally regular.  Trivially, $G$ is out-regular, so that every vertex $u$ has a unique undirected neighbour, which we will denote by $u^*$, and a unique directed out-neighbour, which will be written $u^+$.  For these parameters $S = \{ v \in V(G) : d^-(v) = 0\} $, $S' = \{ v' \in V(G) : d^-(v') \geq 2\} $; hence vertices in $S$ have no directed in-neighbours, i.e. $Z^-(v) = \varnothing $ for $v \in S$.  

Let $F_0 = F_1 = 1, F_2 = 2, F_3 = 3, F_4 = 5 \dots $ be the sequence of Fibonacci numbers.  It is easily verified that $G$ has order $n =  F_{k+3}-3$.  Since the order of $G$ must be even, no almost mixed Moore graph exists with these parameters for $k \equiv 2 \pmod 3$.  We will draw a Moore tree rooted at a vertex $u$ as shown in Figure \ref{fig:rz1Mooretree} for $k = 5$.  In general, for $0 \leq \ell \leq k-1$ the children in level $\ell+1$ of a vertex at level $\ell$ of the tree are drawn below it, with the undirected neighbour (if the vertex has its undirected neighbour at level $\ell+1$ and not $\ell-1$) to the left and the directed out-neighbour on the right, where vertices are labelled $u_0, u_1 \dots $ etc. in increasing  order from top to bottom and left to right.  This process is continued until the tree reaches depth $k$.

\begin{figure}\centering
	\begin{tikzpicture}[middlearrow=stealth,x=0.2mm,y=-0.2mm,inner sep=0.1mm,scale=1.35,
	thick,vertex/.style={circle,draw,minimum size=10,font=\small,fill=lightgray},every label/.style={font=\small}]
	
	\node at (0,0) [vertex,label=above:{$u_0$}] (u0) {};
	
	\node at (-150,60) [vertex,label=left:{$u_1$}] (u1) {};
	\node at (150,60) [vertex,label=right:{$u_2$}] (u2) {};
	
	\node at (-150,120) [vertex,label=left:{$u_3$}] (u3) {};
	\node at (50,120) [vertex,label=left:{$u_4$}] (u4) {};
	\node at (250,120) [vertex,label=right:{$u_5$}] (u5) {};

	\node at (-200,180) [vertex,label=left:{$u_6$}] (u6) {};
	\node at (-100,180) [vertex,label=right:{$u_7$}] (u7) {};
	\node at (50,180) [vertex,label=left:{$u_8$}] (u8) {};
	\node at (200,180) [vertex,label=left:{$u_9$}] (u9) {};
	\node at (300,180) [vertex,label=right:{$u_{10}$}] (u10) {};

	\node at (-200,240) [vertex,label=left:{$u_{11}$}] (u11) {};
	\node at (-125,240) [vertex,label=left:{$u_{12}$}] (u12) {};
	\node at (-75,240) [vertex,label=right:{$u_{13}$}] (u13) {};
	\node at (25,240) [vertex,label=left:{$u_{14}$}] (u14) {};
	\node at (75,240) [vertex,label=right:{$u_{15}$}] (u15) {};
	\node at (200,240) [vertex,label=left:{$u_{16}$}] (u16) {};
	\node at (275,240) [vertex,label=left:{$u_{17}$}] (u17) {};
	\node at (325,240) [vertex,label=right:{$u_{18}$}] (u18) {};

	\node at (-215,300) [vertex,label=below:{$u_{19}$}] (u19) {};
	\node at (-185,300) [vertex,label=below:{$u_{20}$}] (u20) {};
	\node at (-125,300) [vertex,label=below:{$u_{21}$}] (u21) {};
	\node at (-90,300) [vertex,label=below:{$u_{22}$}] (u22) {};
	\node at (-60,300) [vertex,label=below:{$u_{23}$}] (u23) {};
	\node at (25,300) [vertex,label=below:{$u_{24}$}] (u24) {};
	\node at (60,300) [vertex,label=below:{$u_{25}$}] (u25) {};
	\node at (90,300) [vertex,label=below:{$u_{26}$}] (u26) {};
	\node at (185,300) [vertex,label=below:{$u_{27}$}] (u27) {};
	\node at (215,300) [vertex,label=below:{$u_{28}$}] (u28) {};
	\node at (275,300) [vertex,label=below:{$u_{29}$}] (u29) {};
	\node at (310,300) [vertex,label=below:{$u_{30}$}] (u30) {};
	\node at (340,300) [vertex,label=below:{$u_{31}$}] (u31) {};

	\path
	(u0) edge (u1)
	(u0) edge [middlearrow] (u2)
	
	(u1) edge [middlearrow] (u3)
	(u2) edge (u4)
	(u2) edge [middlearrow] (u5)
	
	(u3) edge (u6)
	(u3) edge [middlearrow] (u7)
	(u4) edge [middlearrow] (u8)
	(u5) edge (u9)
	(u5) edge [middlearrow] (u10)
	
	(u7) edge (u12)
	(u8) edge (u14)
	(u10) edge (u17)
	(u6) edge [middlearrow] (u11)
	(u7) edge [middlearrow] (u13)
	(u8) edge [middlearrow] (u15)
	(u9) edge [middlearrow] (u16)
	(u10) edge [middlearrow] (u18)

	(u11) edge (u19)
	(u11) edge [middlearrow] (u20)
	(u12) edge [middlearrow] (u21)
	(u13) edge (u22)
	(u13) edge [middlearrow] (u23)
	(u14) edge [middlearrow] (u24)
	(u15) edge (u25)
	(u15) edge [middlearrow] (u26)
	(u16) edge (u27)
	(u16) edge [middlearrow] (u28)
	(u17) edge [middlearrow] (u29)
	(u18) edge (u30)
	(u18) edge [middlearrow] (u31)
	;
	\end{tikzpicture}
	\caption{The Moore tree for $r = z = 1$ and $k = 5$}
	\label{fig:rz1Mooretree}
\end{figure}
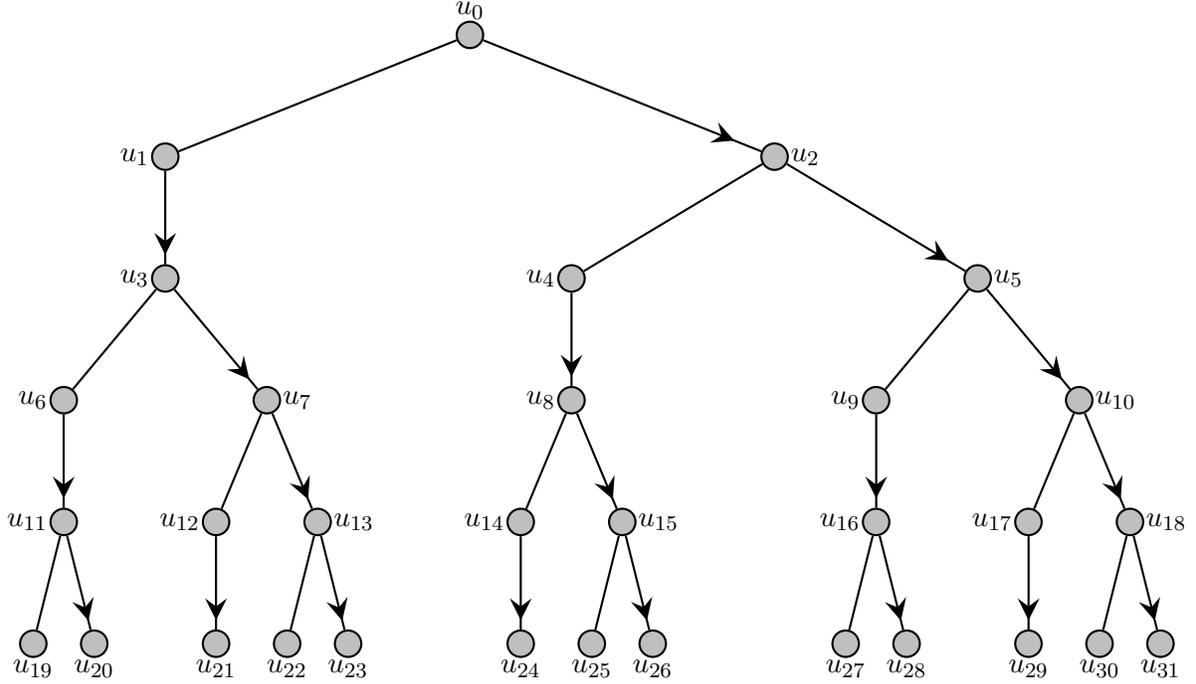
We begin with an elementary consideration on the length of the paths from a vertex to its repeat. 

\begin{lemma}\label{lem:shortpaths}
If there are two distinct mixed paths $P_1, P_2$ from $x$ to $y$ of length $\leq k$, then at least one of them has length $k$.
\end{lemma}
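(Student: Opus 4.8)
The plan is to argue by contradiction: assume that both $P_1$ and $P_2$ have length at most $k-1$ and produce a loop in $G$, which is forbidden.

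First I would record the only structural input needed. Since $G$ has defect one and diameter $k$, in the Moore tree rooted at any vertex $w$ every vertex of $G$ occurs, and exactly one vertex — the repeat $r(w)$ — occurs twice, all others exactly once. The two distinct mixed paths $P_1, P_2$ from $x$ to $y$ are non-backtracking walks of length $\le k$ starting at $x$, so they appear (at distinct nodes) in the Moore tree rooted at $x$; as they both end at $y$, the vertex $y$ occurs at least twice in that tree, and hence $y = r(x)$.

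Next I would extend both paths. By out-regularity, $y$ has a unique directed out-neighbour $y^{+}$. Append the arc $y \to y^{+}$ to each of $P_1$ and $P_2$, obtaining walks $P_1^{+}$ and $P_2^{+}$. Appending an arc cannot create a sub-walk of the form $a \sim b \sim a$, so $P_1^{+}$ and $P_2^{+}$ are again mixed paths; since $P_1 \neq P_2$ and both are extended by the same final arc, $P_1^{+} \neq P_2^{+}$, and since $P_1, P_2$ have length $\le k-1$, both $P_1^{+}, P_2^{+}$ have length $\le k$. Applying the observation of the previous paragraph to the common endpoint $y^{+}$, the two distinct mixed paths $P_1^{+}, P_2^{+}$ of length $\le k$ from $x$ to $y^{+}$ force $y^{+} = r(x)$. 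Combining this with $y = r(x)$ gives $y^{+} = y$, i.e. $y \to y$ is a loop of $G$, contradicting the standing assumption that $G$ has no loops. This completes the proof.

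The argument is entirely elementary; the only point requiring a moment's care is checking that appending an arc to a mixed path yields a mixed path, which is immediate since backtracking is defined solely in terms of undirected edges. I do not expect any genuine obstacle here: this lemma is the easy warm-up on which the more delicate later arguments of the section will be built.
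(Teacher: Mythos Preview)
Your argument is correct and is essentially the same as the paper's: both observe that if $P_1,P_2$ have length $\le k-1$ then not only $y$ but also $y^{+}$ occurs twice in the Moore tree rooted at $x$. The paper states the contradiction directly as ``defect at least two'' (two distinct vertices repeated), whereas you route it through the uniqueness of $r(x)$ to conclude $y=y^{+}$ is a loop; these are the same observation in different wording.
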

\begin{proof}
Suppose that there are mixed paths $P_1 \not = P_2$ from $x$ to $y$  and that both paths have length $\leq k-1$.  Draw the Moore tree of depth $k$ rooted at $x$.  $y$ appears twice in this tree, as does $y^+$, so that the defect would be at least two.
\end{proof}
Colloquially, there cannot be two `short' mixed paths between any pair of vertices.  

\begin{corollary}\label{cor:vnotrepeat}
No $v \in S$ is a repeat and $S$ is an independent set.
\end{corollary}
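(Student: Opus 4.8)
The common theme is the elementary observation that a vertex $v\in S$ has no directed in-neighbour, so the only way a mixed path can enter $v$ is along the unique edge $v^*\sim v$, traversed from $v^*$ to $v$. The first assertion will follow from this together with Lemma~\ref{lem:shortpaths}, and the second from this together with the fact that $G$ has diameter exactly $k$.

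For the first claim, suppose $v\in S$ and $v=r(u)$ for some $u\in V(G)$. Then $v$ occurs twice in the Moore tree of depth $k$ rooted at $u$, i.e. there are two distinct mixed paths $P_1\neq P_2$ from $u$ to $v$, each of length $\leq k$. If both have positive length, then by the observation above each ends with the edge $v^*\sim v$; deleting this last edge from $P_1$ and from $P_2$ produces two mixed paths from $u$ to $v^*$ of length $\leq k-1$, and they are still distinct because $P_1\neq P_2$, contradicting Lemma~\ref{lem:shortpaths}. The remaining case is $u=v$, when one of the two paths, say $P_2$, is trivial: here $P_1$ is a mixed path from $v$ to $v$ of length $\geq 2$ (there are no loops), its truncation $Q$ is a mixed path from $v$ to $v^*$ of length between $1$ and $k-1$, and the single edge $v\sim v^*$ is a second such path; these two are genuinely distinct, since $Q$ being the single edge $v\sim v^*$ would force $P_1$ to contain $v\sim v^*\sim v$ and hence to backtrack. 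Again Lemma~\ref{lem:shortpaths} is contradicted, so no $v\in S$ is a repeat.

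For the second claim, observe first that two vertices of $S$ can never be joined by an arc, since such an arc would give one of them a directed in-neighbour; hence it suffices to exclude an edge $v_1\sim v_2$ with $v_1,v_2\in S$. In that case out-regularity (undirected degree $1$) forces $v_1^*=v_2$ and $v_2^*=v_1$, and since neither vertex has an in-arc one checks, exactly as above, that the only mixed paths ending at $v_1$ are the trivial path (starting at $v_1$) and the single edge $v_2\sim v_1$ (starting at $v_2$): a longer path would have to re-enter $v_2$ along the edge $v_1\sim v_2$ and thus backtrack. Consequently no vertex other than $v_1$ or $v_2$ can reach $v_1$ by any mixed path at all. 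But $G$ has diameter $k$, so every vertex reaches $v_1$ along a shortest walk, which is automatically non-backtracking and of length $\leq k$; since $n=F_{k+3}-3\geq 10$ there is such a vertex outside $\{v_1,v_2\}$, a contradiction. Hence $S$ is independent.

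I do not expect any real difficulty: the whole argument is a short deduction from Lemma~\ref{lem:shortpaths}, the "no in-arc" property of $S$, and (for independence) the fact that the diameter equals $k$. The only place that needs attention is the bookkeeping around trivial and degenerate paths — the case $u=v$ in the first part, and making sure in each application that the two mixed paths produced are genuinely distinct rather than accidentally equal — but in each instance the would-be coincidence is ruled out by the non-backtracking condition.
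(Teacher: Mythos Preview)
Your proof is correct and follows essentially the same approach as the paper's: for the first claim you truncate the two $\leq k$-paths to $v$ at the forced final edge $v^*\sim v$ to obtain two short paths to $v^*$ (handling $u=v$ separately), and for independence you observe that an edge inside $S$ would isolate its endpoints from the rest of a diameter-$k$ graph. The paper's write-up is terser but the logic is identical.
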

\begin{proof}
Let $v \in S$.  Suppose that there exists a vertex $u$ such that $v = r(u)$.  If $u \not = v$, then $u$ must have two paths of length $\leq k-1$ to $v^*$, contradicting Lemma~\ref{lem:shortpaths}.  If $u = v$, then $v$ is contained in a mixed cycle of length $\leq k$, so that there are again two short paths from $v$ to $v^*$.

By definition no arc can have both endpoints in $S$.  If there are $v, w \in S$ such that $v \sim w$, then no other vertex of $G$ would be able to reach $v$ or $w$.  Hence $S$ is independent.  
\end{proof}

We next deduce the relationship between repeats and the undirected neighbours of elements of $S$.

\begin{lemma}\label{lem:repeats}
For every vertex $u$ of $G$ and $v \in S$, either $v^* = r(u)$ or $v^* = r(u^*)$.
\end{lemma}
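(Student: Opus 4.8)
The plan is to reduce everything to counting short paths that end at $v^*$, making essential use of the fact that a vertex $v\in S$ has no directed in-neighbour. Throughout write $P_t(x,y)$ for the number of mixed paths of length at most $t$ from $x$ to $y$; the defect-one hypothesis says $P_k(x,y)\in\{1,2\}$ for all $x,y$, with $P_k(x,y)=2$ exactly when $y=r(x)$. Fix $v\in S$. Since $v$ is not a repeat by Corollary~\ref{cor:vnotrepeat}, $P_k(w,v)=1$ for every $w$. The first step is to show that for $w\ne v$ one has $P_{k-1}(w,v^*)=P_k(w,v)=1$. Indeed, because $d^-(v)=0$, every nontrivial path ending at $v$ finishes with the edge $v^*\sim v$, so deleting this last edge maps paths of length at most $k$ from $w$ to $v$ onto paths of length at most $k-1$ from $w$ to $v^*$; this map is injective, and for surjectivity the only point to check is that no path from $w$ to $v^*$ ends with the edge $v\sim v^*$ — but for $w\ne v$ such a path would be a nontrivial path to $v$ followed by $v\sim v^*$, which backtracks since that path already ends with $v^*\sim v$. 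Hence each $w\ne v$ has a unique path $Q_w$ to $v^*$ of length at most $k-1$; in particular $d(w,v^*)\le k-1$ for every vertex $w$, and for $w\ne v$ we have $r(w)=v^*$ if and only if there is a path of length exactly $k$ from $w$ to $v^*$.

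Next I would dispose of the degenerate cases $u\in\{v,v^*\}$ by proving $r(v)=v^*$. In the Moore tree rooted at $v$, the vertex $v^*$ appears at depth $1$ as the child of the root reached along the edge $v\sim v^*$; it also appears inside the branch below the child $v^+$ reached along the arc $v\to v^+$, and that branch is nothing but the Moore tree of $v^+$ truncated to depth $k-1$, in which $v^*$ occurs because $P_{k-1}(v^+,v^*)=1$. These are two distinct positions of the tree, so by defect one $v^*=r(v)$. Since $\{u,u^*\}=\{v,v^*\}$ in both degenerate cases, the conclusion of the lemma holds there.

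For the main case, take $u$ with $u,u^*\ne v$ and suppose, for a contradiction, that $r(u)\ne v^*$ and $r(u^*)\ne v^*$. The engine of the argument is the sub-claim: if $w\ne v$ and $r(w)\ne v^*$, then $Q_w$ begins with the arc $w\to w^+$. To see this, note $w^+\ne v$ since $d^-(v)=0$; prefixing the arc $w\to w^+$ to $Q_{w^+}$ produces a non-backtracking walk from $w$ to $v^*$ of length at most $k$, which cannot have length exactly $k$ (that would be a second, longer path from $w$ to $v^*$, forcing $r(w)=v^*$), hence has length at most $k-1$ and therefore equals $Q_w$. Applying the sub-claim to $u$ and to $u^*$, both $Q_u$ and $Q_{u^*}$ begin with an arc. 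Now prepend the edge $u^*\sim u$ to $Q_u$; since $Q_u$ starts with the arc $u\to u^+$, this is a non-backtracking walk $W$ from $u^*$ to $v^*$ of length $|Q_u|+1\le k$. If $|Q_u|+1=k$, then $W$ is a path of length $k$ from $u^*$ to $v^*$, distinct from the shorter path $Q_{u^*}$, so $r(u^*)=v^*$, a contradiction; if $|Q_u|+1\le k-1$, then $W=Q_{u^*}$ by uniqueness, contradicting that $W$ begins with an edge whereas $Q_{u^*}$ begins with an arc. Either way we obtain a contradiction, so $r(u)=v^*$ or $r(u^*)=v^*$.

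I expect the main obstacle to be the bijective reduction $P_{k-1}(w,v^*)=P_k(w,v)$ of the first paragraph, with its careful book-keeping about backtracking at $v^*$: this is exactly where the hypothesis $v\in S$ does its work, turning defect one into the statement that $v^*$ is within distance $k-1$ of \emph{every} vertex. Once that is available, the rest runs on the elementary observation that the edge $u\sim u^*$ can be traversed to lengthen a path by one, so that the joint failure $r(u),r(u^*)\ne v^*$ produces either a second path of length at most $k-1$ from $u^*$ to $v^*$ or a path of length $k$ from $u^*$ to $v^*$, each of which contradicts the first paragraph.
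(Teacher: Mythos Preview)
Your proof is correct, but it follows a genuinely different route from the paper's argument.

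The paper argues directly in the Moore tree rooted at $u$. It first notes that $u_2=u^+\notin S$, so $d(u^+,v^*)\le k-1$, and then splits into cases according to where $v$ sits in the tree: $u=v$; $v$ in the $u_1$-branch; $v$ only in the $u_2$-branch with $d(u_2,v)=k-1$; and $v$ only in the $u_2$-branch with $d(u_2,v)\le k-2$. Each case produces two appearances of $v^*$ in the tree rooted at $u$ or at $u^*$, yielding $v^*=r(u)$ or $v^*=r(u^*)$ directly.

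Your argument instead first promotes the observation ``every path to $v$ ends with the edge $v^*\sim v$'' to the global counting statement $P_{k-1}(w,v^*)=1$ for all $w\ne v$, crucially using Corollary~\ref{cor:vnotrepeat} to get equality rather than just $\ge 1$. This single fact then drives a uniform contradiction argument: assuming both $r(u)\ne v^*$ and $r(u^*)\ne v^*$, the sub-claim forces the unique short paths $Q_u$ and $Q_{u^*}$ to begin with arcs, and prepending the edge $u^*\sim u$ to $Q_u$ manufactures a second path from $u^*$ to $v^*$ (either too short, violating uniqueness, or of length exactly $k$, forcing $r(u^*)=v^*$). The paper never proves or needs the exact value $P_{k-1}(w,v^*)=1$; it only uses $d(u^+,v^*)\le k-1$, which is weaker.

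What each approach buys: the paper's case analysis is shorter and more concrete, and it never needs to invoke Corollary~\ref{cor:vnotrepeat}. Your argument is more conceptual and avoids any case split on the position of $v$; the bijection $P_{k-1}(w,v^*)=P_k(w,v)$ and the sub-claim are reusable tools that make the final contradiction entirely symmetric in $u$ and $u^*$.
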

\begin{proof}
Draw the Moore tree rooted at $u$.  As $u_2 = u^+$ has a directed in-neighbour, obviously $u_2 \not \in S$.  $G$ has diameter $k$ and so, since $u_2$ can reach $v$ only through $v^*$, we have $d(u_2,v^*) \leq k-1$.  If $u = v$, then $u_1 = v^*$, so that $v^*$ occurs in both branches and $v^* = r(u)$.  If $v$ belongs to the $u_1$-branch, then either $v^*$ also lies in that branch or $u = v^*$, so that again $v^* = r(u)$.  We may thus assume that $v$ occurs only in the $u_2$-branch.

Suppose that $d(u_2,v) = k-1$.  Then $u_1$ cannot reach $v$ through $u$ and so $v^*$ occurs in both branches of the Moore tree.  Finally assume that $d(u_2,v) \leq k-2$, so that $d(u_2,v^*) \leq k-3$.  As $d(u_3,v) \leq k$, it follows that $d(u_3,v^*) \leq k-1$, so that $u_1=u^*$ has two mixed paths to $v^*$ with length $\leq k$, one through $u_3$ and the other through $u$ and $u_2$, so that $v^* = r(u^*)$.  
\end{proof}
This yields an upper bound on the size of $S$.

\begin{corollary}\label{cor:sizeofS}
$|S| \leq 2$.
\end{corollary}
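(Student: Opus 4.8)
The plan is to read the bound off directly from Lemma~\ref{lem:repeats}. Since $G$ is out-regular with undirected degree $r=1$, every vertex has a unique undirected neighbour, so $G^U$ is a perfect matching and the assignment $v \mapsto v^*$ is a (fixed-point-free) involution of $V(G)$; in particular it is injective. Consequently $|S| = |\{\, v^* : v \in S \,\}|$, and it suffices to bound the number of distinct vertices that can arise as $v^*$ with $v \in S$.

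Now fix an arbitrary vertex $u \in V(G)$. By Lemma~\ref{lem:repeats}, for every $v \in S$ we have $v^* \in \{\, r(u), r(u^*) \,\}$. Hence $\{\, v^* : v \in S \,\} \subseteq \{\, r(u), r(u^*) \,\}$, a set of cardinality at most two. Combining this with the injectivity of $v \mapsto v^*$ noted above yields $|S| \leq 2$.

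There is essentially no obstacle here: the corollary is an immediate consequence of the preceding lemma once one observes that distinct elements of $S$ have distinct undirected neighbours. The only point meriting a moment's care is the degenerate case $r(u) = r(u^*)$, which merely strengthens the conclusion to $|S| \leq 1$ and so does not affect the stated bound. (One could equally phrase the argument without fixing $u$: Lemma~\ref{lem:repeats} forces $\{\, v^* : v \in S \,\}$ to lie in the common two-element set $\{\, r(u), r(u^*) \,\}$ for \emph{every} $u$, which is a stronger statement than needed.)
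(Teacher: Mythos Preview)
Your proof is correct and follows essentially the same route as the paper: use the injectivity of $v \mapsto v^*$ (the paper merely records $|S| \leq |\{\,v^* : v \in S\,\}|$, which your equality of course implies), fix a vertex $u$, and apply Lemma~\ref{lem:repeats} to trap $\{\,v^* : v \in S\,\}$ inside $\{r(u), r(u^*)\}$. The only difference is that you spell out why the map $v \mapsto v^*$ is injective, whereas the paper treats this as obvious.
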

\begin{proof}
Obviously $|S| \leq |\{ v^* : v \in S\} |$.  Fix a vertex $u$.  By Lemma \ref{lem:repeats}, $\{ v^* : v \in S\} \subseteq \{ r(u),r(u^*)\} $.
\end{proof}
As the average directed in-degree of vertices in $G$ is one, Corollary \ref{cor:sizeofS} shows that there are three possible situations:
\begin{enumerate}[label=\roman*)]
	\item $S = \{ v\}$, $S' = \{ v'\}$, $d^-(v') = 2$;
	\item $S = \{ v,w\}$, $S' = \{ v'\}$, $d^-(v') = 3$;
	\item $S = \{ v,w\}$, $S' = \{ v',w'\}$, $d^-(v') = d^-(w') = 2$. 
\end{enumerate}

We complete the proof of the theorem by examining these cases in turn.  Observe that in the final two cases, $v^* \not = w^*$, $\{ v^*,w^*\} \cap \{ v,w\} = \varnothing $ by Corollary \ref{cor:vnotrepeat} and $r(u) \in \{ v^*,w^*\} $ for all $u \in V(G)$ by Lemma \ref{lem:repeats}. 

\begin{theorem}
$(1,1,k;-1)$-graphs are totally regular for $k \geq 3$.
\end{theorem}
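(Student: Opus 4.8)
The plan is to dispose of the three configurations~i)--iii) one at a time, in each case first determining the repeat function $r$ completely and then forcing a contradiction of one of two kinds: some $v\in S$ turns out to be a repeat, contradicting Corollary~\ref{cor:vnotrepeat}, or some vertex acquires two distinct repeats, contradicting $\delta=1$. Throughout write $a=v^{*}$, and in cases~ii) and~iii) also $b=w^{*}$; recall from the remark preceding the theorem that then $a\neq b$, that $a,b\notin S$, and that $r(u)\in\{a,b\}$ for every vertex $u$.

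The first step is to read off the global shape of $r$. In cases~ii) and~iii) I would apply Lemma~\ref{lem:repeats} to \emph{both} $v$ and $w$: since for every $u$ we have $a\in\{r(u),r(u^{*})\}$ and $b\in\{r(u),r(u^{*})\}$, and $r$ is a function, exactly one of $u,u^{*}$ has repeat $a$ and the other has repeat $b$. Hence $r$ is onto $\{a,b\}$, its fibres $r^{-1}(a),r^{-1}(b)$ are complementary transversals of the perfect matching $G^{U}$ (each of size $n/2$), and $r(u^{*})$ is always $r(u)$ with $a$ and $b$ interchanged. In case~i) Lemma~\ref{lem:repeats} gives only that $r^{-1}(a)$ is a vertex cover of $G^{U}$, so $|r^{-1}(a)|\geq n/2$; here an extra preliminary step is to show that $a$ is the only repeat.

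The engine of the argument is an observation about cycles. A mixed cycle of length $\leq k$ through $a=v^{*}$ cannot pass through $v$: since $v$ has no in-arc, such a cycle would enter $v$ along the edge $a\sim v$ and leave it along the arc $v\to v^{+}$, so rotating it to begin at $v$ produces a closed mixed path of length $\leq k$ at $v$, making $v$ a repeat and contradicting Corollary~\ref{cor:vnotrepeat}. A mixed cycle of length $\ell\leq k$ through $a$ that avoids $v$, on the other hand, when prefixed by the edge $v\sim a$ gives a mixed path from $v$ to $a$ of length $1+\ell$ distinct from that edge, forcing $a=r(v)$ when $\ell\leq k-1$ (and, by traversing the cycle twice, the defect to be at least $2$ when $1+2\ell\leq k$); in cases~ii)--iii), where $r(v)=b\neq a$, this leaves no such cycle of length $\leq k-1$, so one is reduced to a cycle of length exactly $k$ through $a$ avoiding $v$. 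To kill the remaining configurations I would use that every vertex reaches $v$ and, since $v$ has no in-arc, does so along the edge $a\sim v$; hence every vertex other than $v$ and $a$ reaches $a$ by a mixed path of length $\leq k-1$ ending in an arc, so the in-arcs of $a$, the arcs out of $a$, and the locations of the vertices of $S'$ --- whose directed in-degrees exceed $1$ by a total of $|S|$ --- are tightly interlocked. Pushing this bookkeeping through, and in case~i) invoking also the exact value $n=F_{k+3}-3$, should in each sub-configuration produce a mixed cycle of length $\leq k$ through $a$, $b$, $v$ or $w$, which is the desired contradiction; if some residual configuration proves recalcitrant, the fallback is a spectral argument modelled on Theorem~\ref{main theorem}, replacing $I+A+A^{2}$ by the matrix that records the number of non-backtracking mixed walks of length $\leq k$ between pairs of vertices.

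The chief obstacle is that, unlike the depth-two Moore tree of Section~2, the Moore tree for $k\geq 3$ branches in a Fibonacci pattern --- an arc-step vertex has two children and an edge-step vertex only one, reflecting that two edge-steps can never be consecutive when $r=1$ --- so a vertex with a prescribed in-degree can reappear at many depths and along many routes, and controlling these multiplicities needs both the level recursion and careful edge/arc accounting. The genuinely delicate point is the sub-case of~ii)/iii) in which there really are two distinct repeats $a\neq b$: there one has to bound the lengths of mixed paths from $a$ to $b$ and from $b$ to $a$ and splice them into a closed mixed walk of length $\leq k$, and it is precisely the hypothesis $k\geq 3$ --- as opposed to $k=2$, already handled in Section~2 --- that makes such a splice short enough to conclude.
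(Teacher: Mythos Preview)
Your proposal is a plan rather than a proof: the decisive step in every branch is ``pushing this bookkeeping through \ldots\ should in each sub-configuration produce a mixed cycle'', and that step is never carried out. The cycle engine you describe is correct as far as it goes --- a short closed mixed walk through $a=v^{*}$ either forces $v$ to be a repeat or forces $r(v)=a$ --- but you have not shown that such a cycle must exist, and you have not justified the assertion $r(v)=b$ in cases~ii)--iii). From your transversal observation you only know $\{r(v),r(a)\}=\{a,b\}$; nothing you have written excludes $r(v)=a$, $r(a)=b$. The spectral fallback is not a safe parachute either: for $k\geq 3$ the analogue of $I+A+A^{2}=J+2I+P$ is a polynomial of degree $k$ in $A$ whose coefficients obey a Fibonacci recursion, the perturbation $P$ still has rank at most $2$, but the resulting eigenvalue equations are degree-$k$ polynomials with no obvious rationality obstruction, so the trace argument from Theorem~\ref{main theorem} does not transplant.

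The paper's proof is quite different in character and avoids cycles altogether. The key structural input you do not use is that every $v'\in S'$ is itself a repeat (because in the Moore tree rooted at $v'$ two of its in-neighbours must share a branch), which combined with $r(u)\in\{v^{*},w^{*}\}$ forces $S'\subseteq\{v^{*},w^{*}\}$. In case~i) a direct count of vertices that can reach $v$ (necessarily through $v^{*}$) gives order at most $2F_{k+1}-4<F_{k+3}-3$. In cases~ii) and~iii), once small $k$ is dispatched by the same kind of reverse count toward $v$ or $w$, the argument proceeds by fixing a root and looking at the vertices $u_{3},u_{7},u_{11},u_{13}$ on the leftmost spine of the Moore tree: each such vertex has all its in-neighbours already in the $u_{1}$-branch, so $u_{2}=u^{+}$ can only reach it if either $r(u)$ is one of those in-neighbours or the vertex lies in $S'$. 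Since the candidate in-neighbour sets are essentially disjoint by Lemma~\ref{lem:shortpaths} and $|S'|\leq 2$, this pigeonholes $S'$ and $r(u)$ into an impossible configuration. That Moore-tree bookkeeping, not any global cycle argument, is what closes the proof.
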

\begin{proof}
We refer the reader to Figure \ref{fig:rz1Mooretree} for clarity.  Suppose that option i) holds.  By Corollary \ref{cor:vnotrepeat} each vertex has a unique mixed path of length $\leq k$ to $v$.  We therefore obtain an upper bound on the order of $G$ by assuming that $v' \sim v$ and counting the vertices with paths of length $\leq k$ to $v$.  There are $M(1,1,k-2) = F_{k+1}-3$ vertices that can reach each of the directed in-neighbours of $v'$ by paths of length $\leq k-2$, so, counting also $v$ and $v'$, the maximum possible order of $G$ would be $2(F_{k+1}-3)+2 = 2F_{k+1}-4$, which is too small for $k \geq 3$. 

Now let us examine option ii).  We can see that $v'$ is a repeat as follows.  Draw the Moore tree rooted at $v'$.  We can assume that $r(v') \not = v'$, so that all directed in-neighbours of $v'$ lie at distance $k$ from $v'$.  One branch of the Moore tree must contain two elements of $Z^-(v')$, so that $v'$ is the repeat of a vertex in $N^+(v')$.

As all repeats lie in $\{ v^*,w^*\} $, we can set $v' = v^*$.  Hence $d^-(w^*) = 1$.  If $k = 3$, then $w$ has a unique in-neighbour $w^*$, which in turn has a unique directed in-neighbour; hence at most 6 vertices can reach $w$ by paths of length $\leq 3$, this bound being achieved when $v' \in N^-(w^*)$, whereas $M(1,1,3)-1 = 10$.  Thus $k \geq 4$.  Draw the Moore tree rooted at $v$.  Then $v_1 = v'$.  Neither $v_3$ nor $v_7$ is equal to $v'$, or there would be a cycle of length $\leq 2$ through $v'$, contradicting Lemma~\ref{lem:shortpaths}.  Hence, since $v_2$ can reach both of these vertices by paths of length $\leq k$, but all vertices in $N^-(v_3) = \{ v_1,v_6\} $ and $N^-(v_7) = \{ v_3,v_{12}\} $ already appear in the $v_1$-branch, $r(v)$ must occur in both $N^-(v_3)$ and $N^-(v_7)$, so that $v_1 = v'$ has two short paths to $r(v)$ in violation of Lemma~\ref{lem:shortpaths}.

Finally assume that iii) holds.  As in ii), we can show that both $v'$ and $w'$ are repeats, so we can take $v^* = v'$ and $w^* = w'$.  If $k = 3$, counting initial vertices of paths of length $\leq 3$ to $v$ shows that the order of $G$ is at most 9, whereas $|G| = 10$.  Similarly, if $k = 4$ the order is at most $16$, whereas $|G| = 18$.  Assume that $k \geq 5$ and let $u$ be an arbitrary vertex.  Consider $u_3, u_7, u_{11}$ and $u_{13}$.  A directed in-neighbour and the undirected neighbour of each of these vertices already occur in the $u_1$-branch.  However, $u_2$ can reach all of these vertices by paths of length $\leq k$.  Therefore
\begin{itemize}
	\item if $u_3 \not \in S'$, then $r(u) \in \{ u_1,u_6\} $;
	\item if $u_7 \not \in S'$, then $r(u) \in \{ u_3,u_{12}\} $;
	\item if $u_{11} \not \in S'$, then $r(u) \in \{ u_6,u_{19}\} $;
	\item if $u_{13} \not \in S'$, then $r(u) \in \{ u_7,u_{22}\} $.
\end{itemize}

By Lemma~\ref{lem:shortpaths} the above sets are disjoint, with the exception of $\{ u_1,u_6\} $ and $\{ u_6,u_{19}\} $.  Hence the five sets $\{ u_3,u_7\} , \{ u_3,u_{13}\} , \{ u_7,u_{11}\} , \{ u_7,u_{13}\} $ and $\{ u_{11},u_{13}\} $ intersect $S'$.  These vertices are distinct and $|S'| = 2$, so the only solution is $S' = \{ u_7,u_{13}\} $, $u_3,u_{11} \not \in S'$.  Thus $r(u) \in \{ u_1,u_6\} \cap \{ u_6,u_{19} \} $, so $r(u) = u_6$.  All repeats lie in $\{ v^*,w^*\} = S' = \{ u_7,u_{13}\} $, so $u_6 \in \{ u_7,u_{13}\} $, contradicting Lemma~\ref{lem:shortpaths}
\end{proof}

\section{Total regularity of $2$-geodetic mixed graphs with excess one}

We turn now to the question of the total regularity of mixed graphs with small excess.  A mixed graph is an $(r,z,k;+\epsilon )$-graph if it has minimum undirected degree $\geq r$, minimum directed out-degree $\geq z$, is $k$-geodetic and has order equal to $M(r,z,k) + \epsilon $; $\epsilon$ is the \emph{excess} of $G$.  We will show that $(r,z,2;+1)$-graphs are totally regular.  

Let $G$ be an $(r,z,2;+1)$-graph that is not totally regular.  $G$ has order $(r+z)^2+z+2$ and it can be shown in the manner of Lemma~\ref{lem:outregular} that $G$ is out-regular.  It follows that for every vertex $u$ of $G$ there is a unique vertex $o(u)$ that cannot be reached by a path of length $\leq 2$ from $u$.  We call $o(u)$ the \emph{outlier} of $u$.  The proof of the next lemma is similar to that of Lemma~\ref{lem:fundamental} and hence is omitted.  

\begin{lemma}\label{lem:fundamental3}
$\displaystyle S\subseteq\bigcap_{u \in V(G)} o(N^+(u))$, $\displaystyle S' \subseteq \bigcap _{u \in V(G)} N^+(o(u))$ and $d^-(v') = z+1$ for all $v' \in S'$.
\end{lemma}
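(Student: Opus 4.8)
The plan is to mimic the proofs of Lemmas~\ref{lem:fundamental} and~\ref{lem:fundamental2}, exploiting the duality between the two settings: where the almost Moore argument produced the repeat $r(u)$, the excess argument will produce (or forbid the avoidance of) the outlier $o(u)$, and the role of the Moore property is played by $2$-geodecity, which here is a \emph{strict} condition. I will use freely that $G$ is out-regular with undirected degree $r$ and directed out-degree $z$, and that the depth-$2$ Moore tree rooted at any vertex $u$ carries $M(r,z,2)$ pairwise distinct vertices (by $2$-geodecity), so that exactly one vertex of $G$ — namely $o(u)$ — is absent from it and the $r+z$ branches of the tree are pairwise vertex-disjoint.

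For the inclusion $S\subseteq\bigcap_u o(N^+(u))$ I would fix $u$ and $v\in S$ and suppose, for contradiction, that no $w\in N^+(u)$ has $o(w)=v$; then every $w\in N^+(u)$ reaches $v$ by a (unique) mixed path of length $\le 2$. Tracing this path, the branch rooted at $w$ must contain an in-neighbour of $v$ — either $w$ itself, or the penultimate vertex, which (when $d(u,v)\ge 2$) is a genuine child of $w$ rather than the backtrack at the root. Disjointness of the $r+z$ branches then yields $r+z$ distinct in-neighbours of $v$, contradicting $|N^-(v)| = r + d^-(v)\le r+z-1$.

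For the inclusion $S'\subseteq\bigcap_u N^+(o(u))$ together with $d^-(v')=z+1$ I would fix $u$ and $v'\in S'$ and treat first the main case $d(u,v')=2$, so that $u\notin N^-(v')$ and every in-neighbour of $v'$ either lies in some branch or equals $o(u)$. The key point is that by \emph{strict} $2$-geodecity no branch can contain two in-neighbours $y_1,y_2$ of $v'$: prolonging the path from $w$ to each $y_i$ by the final step $y_i\to v'$ (or $y_i\sim v'$) would give two short mixed paths from $w$ to $v'$. Hence $|N^-(v')|\le (r+z) + [\,o(u)\in N^-(v')\,]\le r+z+1$; comparing with $|N^-(v')|=r+d^-(v')\ge r+z+1$ forces equality, so $d^-(v')=z+1$, each branch carries exactly one in-neighbour of $v'$, and, crucially, $o(u)\in N^-(v')$, i.e.\ $v'\in N^+(o(u))$.

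The main obstacle — and the reason the published proof is omitted as ``similar'' — is the bookkeeping in the degenerate positions of $v,v'$ relative to the root, namely $u=v$, $v\in N^+(u)$ and their primed analogues. There the clean appeal to branch-disjointness fails, since a short path from $w$ to $v$ may return through the root $u$, so one must argue directly with the backtracking convention of the Moore tree, exactly as in the omitted cases of Lemmas~\ref{lem:fundamental} and~\ref{lem:fundamental2}; for the primed statement it helps that the coarse bound $d^-(v')\le z+2$ obtained in these cases already guarantees the existence of a vertex at distance $2$ from $v'$, so that the sharp equality $d^-(v')=z+1$ from the main case can be fed back in.
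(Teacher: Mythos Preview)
Your reconstruction is correct and is exactly the argument the paper has in mind when it omits the proof as ``similar to that of Lemma~\ref{lem:fundamental}'': the same branch-disjointness count in the depth-$2$ Moore tree, with outliers playing the role of repeats and $2$-geodecity replacing the diameter bound. The only wrinkle is that your feedback manoeuvre via a ``coarse bound $d^-(v')\le z+2$'' is unnecessary --- the case $u=v'$ already yields $|N^-(v')|\le (r+z)+1$ directly (each of the $r+z$ branches carries at most one in-neighbour of $v'$, and $o(v')$ at most one more), which pins down $d^-(v')=z+1$ before you treat the remaining positions of $u$.
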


\begin{lemma}\label{lem:outliersS'}
$S' = N^+(o(u))$ for all $u \in V(G)$ and $S \subseteq N^-(v')$ for all $v' \in S'$.
\end{lemma}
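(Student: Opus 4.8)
The plan is to extract both assertions from a single counting identity applied to a vertex $v' \in S'$, which exists because $G$ is not totally regular. The first move is to observe that $v'$ is reached by a mixed path of length $\leq 2$ from \emph{every} vertex of $G$: by Lemma~\ref{lem:fundamental3} we have $v' \in N^+(o(x))$ for all $x$, so $o(x) \in N^-(v')$; if $o(x)$ were equal to $v'$ then $v'$ would be an undirected neighbour or an out-neighbour of itself, which is impossible. Hence no vertex has $v'$ as its outlier, and the number of distinct initial vertices of mixed paths of length $\leq 2$ terminating at $v'$ is exactly $n$.

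Next I would evaluate that count directly. Since $G$ is $2$-geodetic it contains no mixed cycle of length $\leq 2$ (in particular no digon), and no vertex starts two distinct mixed paths of length $\leq 2$ into $v'$; these facts make the contributions of the root $v'$, of $N^-(v')$, and of the length-two paths through each $w \in N^-(v')$ pairwise disjoint and exhaustive. Using out-regularity we have $|U(v')| = r$ and $|N^-(w)| = r + d^-(w)$, and a vertex $w \in N^-(v')$ contributes $r + d^-(w) - 1$ valid non-backtracking length-two paths if $w \sim v'$ (one must discard $v'$, which is an undirected neighbour of $w$ and would give a backtrack) and $r + d^-(w)$ if $w \to v'$ (here $v' \notin N^-(w)$, else a digon or a forbidden double adjacency would arise). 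Plugging in $d^-(v') = z+1$ and $|N^-(v')| = r+z+1$ from Lemma~\ref{lem:fundamental3}, and writing $\Delta := \sum_{w \in N^-(v')}(z - d^-(w))$, the sum collapses to $n = n + (r+z) - \Delta$, so $\Delta = r+z$.

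On the other hand every $w \in N^-(v') \setminus S$ has $d^-(w) \geq z$, so those terms of $\Delta$ are non-positive, giving $\Delta \leq \sum_{w \in N^-(v') \cap S}(z - d^-(w)) \leq \sum_{w \in S}(z - d^-(w))$. As $G$ is out-regular the mean directed in-degree is $z$, so $\sum_{w \in S}(z - d^-(w)) = \sum_{v'' \in S'}(d^-(v'') - z) = |S'|$ by Lemma~\ref{lem:fundamental3}. Hence $|S'| \geq \Delta = r+z$, while Lemma~\ref{lem:fundamental3} also gives $|S'| \leq |N^+(o(u))| = r+z$ for every $u$; therefore $|S'| = r+z$, and since $S' \subseteq N^+(o(u))$ we conclude $S' = N^+(o(u))$ for all $u$. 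Finally, equality throughout the chain above forces $\sum_{w \in N^-(v') \cap S}(z - d^-(w)) = \sum_{w \in S}(z - d^-(w))$ with every summand on the right strictly positive, which is possible only if $S \subseteq N^-(v')$; as $v' \in S'$ was arbitrary, this gives the second statement.

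The step I expect to be the main obstacle is the exact evaluation of the path count feeding the identity $n = n + (r+z) - \Delta$: one must verify carefully that $2$-geodeticity genuinely rules out every overlap — an in-neighbour of $v'$ reappearing among the distance-two vertices, a vertex reached along two different length-two paths, a backtrack of the form $v' \sim w \sim v'$, and a digon through $v'$ — so that the naive sum is an \emph{equality} rather than merely an upper bound. Once this is nailed down the remainder is routine manipulation using out-regularity and the average-in-degree identity.
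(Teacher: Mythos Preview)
Your proof is correct and follows essentially the same route as the paper: both arguments show that $v'$ is never an outlier, then count paths of length $\leq 2$ terminating at $v'$ to obtain $|S'| \geq r+z$, combine this with the upper bound $|S'| \leq |N^+(o(u))| = r+z$ from Lemma~\ref{lem:fundamental3}, and read off $S \subseteq N^-(v')$ from the equality case. The paper phrases the count as a one-line lower bound $|V(G)| \geq 1 + r + (z+1) + r(r-1+z) + (z+1)(r+z) - |S'|$, whereas you make the same computation explicit via the quantity $\Delta = \sum_{w \in N^-(v')}(z - d^-(w))$ and the identity $n = n + (r+z) - \Delta$; the substance is identical.
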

\begin{proof}
Fix $v' \in S'$.  By $2$-geodecity, every vertex has at most one path of length $\leq 2$ to $v'$.  By Lemma~\ref{lem:fundamental3}, $S'$ is contained in the out-neighbourhood of any outlier, so by $2$-geodecity no element of $S'$ is an outlier.  Hence we obtain a lower bound for the order of $G$ by assuming that $S \subseteq N^-(v')$ and counting paths of length $\leq 2$ to $v'$, yielding 
\[ |V(G)| \geq 1+r+z+1+r(r-1+z)+(z+1)(r+z)-|S'|.\]
Rearranging, we obtain $|S'| \geq r+z$.  By Lemma~\ref{lem:fundamental3}, $r+z$ is also an upper bound on the size of $S'$.  The resulting equality implies the second half of the result.
\end{proof}

\begin{theorem}\label{thm:totreg}
All $(r,z,2;+1)$-graphs are totally regular.
\end{theorem}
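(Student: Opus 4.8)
The plan is to take the structural facts already in hand to the point where $r$ is forced to equal $1$, and then derive a parity contradiction from the order formula; this shows that no $(r,z,2;+1)$-graph can fail to be totally regular.

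Since $G$ is assumed not to be totally regular and $d^+(u)=z$ for all $u$ (out-regularity), some vertex has $d^-(u)\neq z$; as the in-degrees average to $z$, this forces $S'\neq\varnothing$. Thus Lemma~\ref{lem:outliersS'} applies and yields $|S'|=r+z$ together with $S'=N^+(o(u))$ for every $u\in V(G)$. In other words, every vertex that occurs as an outlier has out-neighbourhood exactly $S'$.

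Next I would show that $G$ has at least two distinct outliers. Indeed, if a single vertex $w$ were the outlier of \emph{every} vertex, then no vertex could reach $w$ by a path of length one, so $w$ would have no undirected neighbour and no directed in-neighbour; this contradicts out-regularity, since $d(w)=r\geq 1$. So choose distinct vertices $w_1,w_2$, each of which is the outlier of some vertex. By the previous step $N^+(w_1)=N^+(w_2)=S'$, so in particular each of the $r$ undirected neighbours $x$ of $w_2$ lies in $S'=N^+(w_1)$. For each such $x$ the sequence $w_1\,x\,w_2$ is a mixed path of length two from $w_1$ to $w_2$ — it is non-backtracking precisely because $w_1\neq w_2$. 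These $r$ paths are pairwise distinct, so $2$-geodecity forces $r\leq 1$, whence $r=1$.

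Finally, if $r=1$ then $G^U$ is $1$-regular, hence a perfect matching, so $|V(G)|$ is even; but $|V(G)|=(1+z)^2+z+2=z^2+3z+3$ is odd for every $z\geq 1$. This contradiction shows that no non-totally-regular $(r,z,2;+1)$-graph exists. Essentially all the work is carried by Lemmas~\ref{lem:fundamental3} and~\ref{lem:outliersS'}; in the remaining argument the only point needing care is the verification that the length-two walks through the undirected neighbours of $w_2$ are genuine, pairwise distinct, non-backtracking mixed paths, after which $2$-geodecity immediately caps $r$. It is worth noting that, unlike the defect-one case of Section~2, where the analogous count rules out only $r\geq 3$ and $r=2$ must be excluded spectrally, here $2$-geodecity is strong enough to eliminate every $r\geq 2$, so the proof terminates at the parity step with no spectral input.
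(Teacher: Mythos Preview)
Your proof is correct and follows essentially the same route as the paper's: two distinct outliers share the same out-neighbourhood $S'$, so the $r$ undirected neighbours of one give $r$ distinct length-two mixed paths to the other, whence $r=1$ and the parity of $(1+z)^2+z+2$ finishes. Your write-up is more explicit than the paper's (which dismisses the existence of two outliers as ``trivial'' and leaves the construction of the two paths implicit), and your justification for two outliers via $d(w)=r\geq 1$ is fine---an even shorter alternative is simply that $o(w)\neq w$ always, so the outlier function cannot have singleton range.
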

\begin{proof}
Trivially, $G$ contains at least two distinct outliers $o_1$ and $o_2$. If $G$ is not totally regular, then by Lemma~\ref{lem:outliersS'}, $N^+(o_1) = N^+(o_2)\,(=S')$.  Suppose that $r \geq 2$; then there are two distinct paths of length two from $o_1$ to $o_2$, contradicting $2$-geodecity.  Therefore $r = 1$.  However, this implies that $G$ contains a perfect matching and hence has even order, whereas the order $z^2+3z+3$ is odd.  Therefore $G$ must be totally regular.      
\end{proof}

\section{Total regularity of $(r,1,k;+1)$-graphs for $k \geq 3$}

As in the case of almost mixed Moore graphs, total regularity for $k \geq 3$ is a more difficult problem.  We solve it for $z = 1$.  Let $G$ be an $(r,1,k;+1)$-graph that is not totally regular, where $k \geq 3$.  Trivially $G$ is out-regular.  As before, for any vertex $u$ the unique directed out-neighbour of $u$ will be written $u^+$.  We first make an observation concerning the relationship between the position of vertices in the Moore tree rooted at $u$ and the outlier of the vertex $u^+$.

\begin{lemma}\label{lem:Mooretrees}
Draw the Moore tree of depth $k$ for an arbitrary vertex $u$.  Let $w$ be such that $d(u,w) \leq k-1$ and the mixed path from $u$ to $w$ does not begin with a directed arc (this includes the possibility $w = u$).  Then if either
\newline i) $w \in S$ or
\newline ii) $w \not \in S'$ and $w$ appears in the Moore tree as the endpoint of an arc,
\newline then $w = o(u^+)$.
\end{lemma}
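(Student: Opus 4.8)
The plan is to track carefully how a vertex $w$ with a ``short'' incoming path from $u$ interacts with the tree rooted at $u$, and to exploit $k$-geodecity to force $w$ to be the outlier of $u^+$. Write $u_1 = u^*$ for the undirected neighbour of $u$ (the left child in the Moore tree) and $u_2 = u^+$ for the directed out-neighbour (the right child). Since $G$ has excess one and is out-regular, every vertex has exactly one outlier. The key observation is that $u^+$ can reach, by a path of length $\leq k$, every vertex that the Moore tree at $u$ reaches by a path of length $\leq k-1$ that starts at $u$ with an arc, together with every vertex reached from $u_1$ by paths of length $\leq k-1$ continuing into the $u_1$-branch; the one vertex $u^+$ provably \emph{cannot} reach within distance $k$ is $u$ itself, unless some coincidence of paths occurs. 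So I would argue by contradiction: assume $w \neq o(u^+)$, i.e. $u^+$ reaches $w$ by some mixed path $Q$ of length $\leq k$.

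First I would handle case (i). Suppose $w \in S$, so $Z^-(w) = \varnothing$ when $z=1$ (no vertex has an arc into $w$). The hypothesis says the path $P$ from $u$ to $w$ of length $\leq k-1$ does not begin with an arc, so either $w = u$ or $P$ begins with the edge $u \sim u^*$. Now consider the path $Q$ from $u^+$ to $w$: prepending the arc $u \to u^+$ gives a mixed walk from $u$ to $w$ of length $\leq k+1$; I need to promote this to a genuine second short mixed \emph{path} contradicting $k$-geodecity. The point is that $Q$ cannot end by entering $w$ along an arc (as $w \in S$), so $Q$ ends with an edge into $w$, say through $w^*$; hence $u^+$ reaches $w^*$ in $\leq k-1$ steps. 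Combined with $u \to u^+$ this gives a path $u \to u^+ \rightsquigarrow w^* \sim w$ which is non-backtracking (it begins with an arc, so it differs from $P$, which begins with an edge or is trivial), of length $\leq k$; but $P$ followed by nothing — or rather, $P$ itself — is a second such path to $w$ (and $P$ to $w^*$ is handled by noting $w^*$ then appears twice). One must be slightly careful that the prepended walk is non-backtracking at the junction $u^+$; since the step out of $u^+$ in $Q$ cannot be $u^+ \sim u$ (that is an edge, and we need to check $Q$'s first step), but in fact the potential backtrack $u \to u^+ \sim u$ involves an arc followed by an edge, which is not a forbidden $\sim,\sim$ pattern, so non-backtracking is automatic here. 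This yields the contradiction, so $w = o(u^+)$.

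For case (ii), $w \notin S'$ means $d^-(w) \leq z = 1$, and $w$ appears in the Moore tree at $u$ as the endpoint of an arc, say $x \to w$ with $x$ at level $\leq k-2$ and $x$ on the path $P$ from $u$. Since $d^-(w) \leq 1$, this arc $x \to w$ is the \emph{only} arc into $w$. I would now run the same contradiction: if $u^+$ reaches $w$ by a path $Q$ of length $\leq k$, then either $Q$ enters $w$ via the arc $x \to w$ — forcing $u^+$ to reach $x$ in $\leq k-1$ steps, and then the path $u \to u^+ \rightsquigarrow x \to w$ together with $P$ (which reaches $w$ via $u \rightsquigarrow x \to w$ as well, but starting with an edge or trivially) gives two short mixed paths to $w$, or more cleanly forces $x$ to be reached twice within distance $k-1$ from $u$ — or $Q$ enters $w$ via an edge $w^* \sim w$, and then as in case (i) we get a path from $u$ through $u^+$ to $w$ beginning with an arc, contradicting the existence of $P$. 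In either subcase $k$-geodecity (or the excess-one bound applied to the tree at $u$, seeing a vertex twice) is violated.

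I expect the main obstacle to be the careful bookkeeping of \emph{which} path enters $w$ by an arc and which by an edge, and ensuring that the two paths one produces are genuinely distinct and genuinely non-backtracking at every junction — in particular at the splice point $u^+$ and at the vertex immediately before $w$. The structural facts from Lemma~\ref{lem:fundamental3} and Lemma~\ref{lem:outliersS'} (that elements of $S$ have no in-arc when $z=1$, that elements of $S'$ have in-degree exactly $z+1=2$) are what make the ``only one arc into $w$'' reductions work, so I would invoke those freely. The case analysis on whether the path from $u$ to $w$ extends through $u$ into $u^+$'s reachable set versus being ``trapped'' in the $u_1$-branch is routine once the splicing lemma pattern is set up, but it is where an incautious write-up would go wrong.
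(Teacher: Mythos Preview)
Your approach is essentially the paper's, run as a contradiction, but your main line in case (i) contains an arithmetic slip: the path $u \to u^+ \rightsquigarrow w^* \sim w$ has length at most $k+1$, not $k$ (you are prepending one step to $Q$, which already had length $\leq k$), so it does not produce two $\leq k$-paths to $w$. The fix is exactly what you put in your parenthetical and in your ``or more cleanly'' remark for case (ii): argue at the penultimate vertex instead. Since $w$ sits at level $\leq k-1$ in an undirected branch (or is $u$ itself), every in-neighbour of $w$ --- all of $U(w)$ in case (i), and $U(w)\cup\{x\}$ in case (ii) --- already occupies a position at level $\leq k$ in the undirected branches of the Moore tree at $u$. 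Your path $u \to u^+ \rightsquigarrow w^*$ (respectively $\rightsquigarrow x$) of length $\leq k$ then places that in-neighbour in the $u^+$-branch as well, violating $k$-geodecity. Stated directly rather than by contradiction, this is the paper's one-line proof: every vertex of $N^-(w)$ appears in the undirected branches, hence by $k$-geodecity none lies within distance $k-1$ of $u^+$, hence $d(u^+,w)>k$. Once you promote your parenthetical to the main argument, you have the paper's proof.

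Two minor points. You write $u^*$ for ``the'' undirected neighbour of $u$, but in this section $z=1$ while $r$ is arbitrary, so there are $r$ undirected branches; this does not affect the argument. And you need not invoke the lemmas from the $k=2$ section: when $z=1$, the facts $d^-(v)=0$ for $v\in S$ and $d^-(w)\leq 1$ for $w\notin S'$ are immediate from the definitions of $S$ and $S'$.
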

\begin{proof}
Condition ii) is illustrated in Figure~\ref{fig:Mooretreelemma}, where dotted rectangles represent trees of depth $k - 1 \geq 2$. We suppose that $w\notin S'$; in-neighbours of $w$ are shown in black. If $w$ satisfies either of these conditions i) or ii), then, as in Figure~\ref{fig:Mooretreelemma}, every vertex of $N^-(w)$ appears in the undirected branches of the tree.  In particular, if $w = u \in S$, then $u$ has no in-neighbours apart from those in $U(u) = N^+(u)-\{ u^+\} $.  Therefore by $k$-geodecity there are no in-neighbours of $w$ within distance $k-1$ of $u^+$, so that $d(u^+,w) > k$.  
\end{proof}

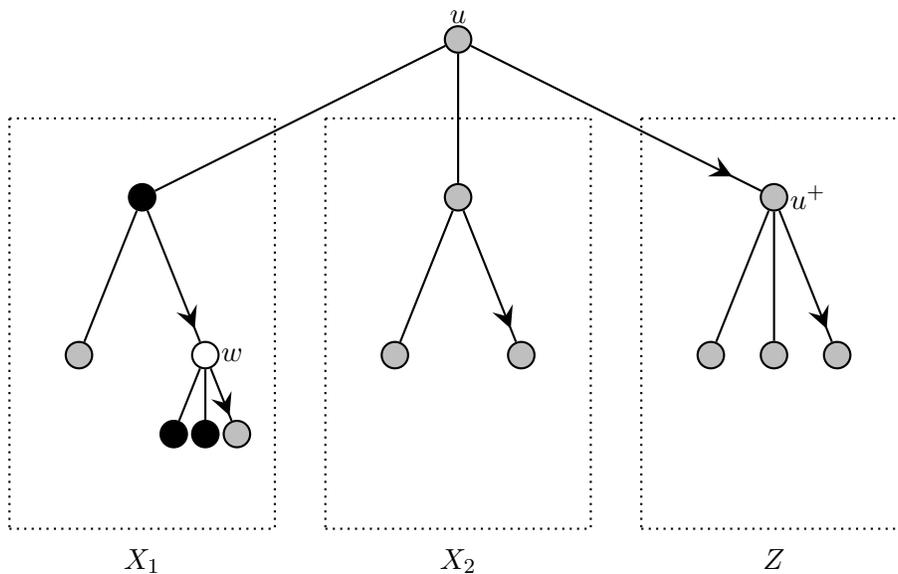
\begin{figure}\centering
	\begin{tikzpicture}[middlearrow=stealth,x=0.2mm,y=-0.2mm,inner sep=0.1mm,scale=2.1,
	thick,vertex/.style={circle,draw,minimum size=10,font=\small,fill=lightgray},every label/.style={font=\small}]
	\node at (200,0) [vertex,label=above:{$u$}] (v0) {};
	
	\node at (100,50) [vertex, fill=black] (v1) {};
	\node at (200,50) [vertex] (v2) {};
	\node at (300,50) [vertex,label=right:{$u^+$}] (v3) {};
	
	\node at (80,100) [vertex] (v9) {};
	\node at (120,100) [vertex,label=right:{$w$},fill=white] (v4) {};
	\node at (180,100) [vertex] (v5) {};
	\node at (220,100) [vertex] (v6) {};
	\node at (280,100) [vertex] (v10) {};
	\node at (300,100) [vertex] (v11) {};
	\node at (320,100) [vertex] (v12) {};
	
	\node at (110,125) [vertex, fill=black] (v13) {};
	\node at (120,125) [vertex, fill=black] (v14) {};
	\node at (130,125) [vertex] (v15) {};
	
	\node at (100,160) [label=below:{$X_1$}] {};
	\node at (200,160) [label=below:{$X_2$}] {};
	\node at (300,160) [label=below:{$Z$}] {};
	\path
	(v0) edge (v1)
	(v0) edge (v2)
	(v0) edge [middlearrow] (v3)
	(v1) edge (v9)
	(v1) edge [middlearrow] (v4)
	(v2) edge (v5)
	(v2) edge [middlearrow] (v6)
	(v3) edge (v10)
	(v3) edge (v11)
	(v3) edge [middlearrow] (v12)
	(v4) edge (v13)
	(v4) edge (v14)
	(v4) edge [middlearrow] (v15)
	;
	\draw[dotted] (58,25) -- (58,155);
	\draw[dotted] (58,25) -- (142,25);
	\draw[dotted] (58,155) -- (142,155);
	\draw[dotted] (142,25) -- (142,155);
	
	\draw[dotted] (158,25) -- (158,155);
	\draw[dotted] (158,25) -- (242,25);
	\draw[dotted] (158,155) -- (242,155);
	\draw[dotted] (242,25) -- (242,155);
	
	\draw[dotted] (258,25) -- (258,155);
	\draw[dotted] (258,25) -- (342,25);
	\draw[dotted] (258,155) -- (342,155);
	\draw[dotted] (342,25) -- (342,155);
	
	\end{tikzpicture}
	\caption{Diagram for Lemma~\ref{lem:Mooretrees}}
	\label{fig:Mooretreelemma}
\end{figure}

\begin{corollary}\label{outlierofv+}
$o(v^+) = v$ for all $v \in S$.
\end{corollary}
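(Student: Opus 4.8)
The plan is to obtain this as the degenerate $w = u$ instance of Lemma~\ref{lem:Mooretrees}. Fix $v \in S$. Since $G$ is out-regular the directed out-neighbour $v^+$ is well defined, and the excess-one hypothesis together with $k$-geodecity guarantees that $v^+$ has a unique outlier $o(v^+)$: the Moore tree of depth $k$ rooted at $v^+$ has $M(r,1,k)$ vertices, all distinct by $k$-geodecity, so exactly one vertex of $G$ is missed. Thus the statement $o(v^+) = v$ makes sense and is what we must verify.

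Now I would apply Lemma~\ref{lem:Mooretrees} with root $u = v$ and with $w = v$. Here $d(u,w) = d(v,v) = 0 \leq k-1$ (as $k \geq 3$), and the mixed path from $u$ to $w$ is the trivial path of length $0$, which vacuously does not begin with a directed arc; hence $w$ satisfies the standing hypothesis of the lemma, including the explicitly permitted possibility $w = u$. Since $w = v \in S$, alternative (i) of the lemma applies, and its conclusion gives $v = w = o(u^+) = o(v^+)$, as required.

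I do not anticipate any real obstacle: the corollary is precisely part (i) of Lemma~\ref{lem:Mooretrees} specialised to $w = u \in S$. The only things to check are that the length-zero path genuinely begins with no arc and that the lemma's phrasing admits $w = u$, both of which are immediate from the statement of the lemma itself.
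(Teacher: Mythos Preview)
Your proposal is correct and follows essentially the same approach as the paper: both apply Lemma~\ref{lem:Mooretrees} with $u = w = v$ and invoke alternative~(i) since $v \in S$. The paper's version is slightly terser, simply noting that all in-neighbours of $v$ lie in undirected branches of the Moore tree rooted at $v$ and then citing the lemma, but the underlying argument is identical.
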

\begin{proof}
As $v \in S$, all of the in-neighbours of $v$ are contained in undirected branches of the Moore tree rooted at $v$.  Therefore by Lemma~\ref{lem:Mooretrees} the outlier of $v^+$ must be $v$ itself.
\end{proof}

We may now deduce the relative size of the sets $S$ and $S'$ and restrict the existence of edges and arcs in $S \cup S'$.

\begin{lemma}\label{lem:excessoneklarge}
$|S| = |S'|$ and $o(v') \in Z^-(v')$ for all $v' \in S'$.
\end{lemma}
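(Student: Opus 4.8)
The plan is to pin down $|S|=|S'|$ by sandwiching it between two inequalities, and then to read off the statement about outliers from the fact that the injection witnessing one of those inequalities is forced to be a bijection.

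First I would record the easy inequality $|S'|\le |S|$. Since $G$ is out-regular with directed out-degree $1$, its arc set has exactly $n=|V(G)|$ elements, so the directed in-degrees sum to $n$. Splitting the identity $\sum_{u\in V(G)}\big(d^-(u)-1\big)=0$ over $S$, over $S'$, and over the remaining vertices (which contribute nothing) gives
\[\sum_{v'\in S'}\big(d^-(v')-1\big)=\sum_{v\in S}\big(1-d^-(v)\big)=|S|,\]
using $d^-(v)=0$ for $v\in S$. As $d^-(v')\ge 2$ for every $v'\in S'$, each summand on the left is at least $1$, whence $|S'|\le |S|$.

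For the reverse inequality I would exhibit an injection $\phi\colon S\to S'$, namely $\phi(v)=v^+$. Injectivity is immediate from Corollary~\ref{outlierofv+}: if $v_1^+=v_2^+$ with $v_1,v_2\in S$ then $v_1=o(v_1^+)=o(v_2^+)=v_2$. The substantive point is that $\phi$ lands in $S'$, i.e. that $d^-(v^+)\ge 2$ for every $v\in S$. Suppose instead that $d^-(v^+)=1$, so that $Z^-(v^+)=\{v\}$ and in particular $v^+\notin S'$. Choose an undirected neighbour $w$ of $v$ (one exists as $r\ge 1$); note $w\ne v^+$, since otherwise the pair $\{v,w\}$ would carry both an edge and an arc. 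Now draw the Moore tree of depth $k$ rooted at $w$. The vertex $v$ occurs at depth $1$ as the endpoint of the edge $w\sim v$, the path $w\sim v$ does not begin with an arc, and $v\in S$, so Lemma~\ref{lem:Mooretrees}(i) yields $v=o(w^+)$. On the other hand, by $k$-geodecity $v^+$ occurs at depth exactly $2$ in the same tree, namely as the child of $v$ along the arc $v\to v^+$ (present because $v^+\ne w$); here $2\le k-1$ because $k\ge 3$, the path $w\sim v\to v^+$ does not begin with an arc, $v^+$ appears as an arc-endpoint, and $v^+\notin S'$, so Lemma~\ref{lem:Mooretrees}(ii) yields $v^+=o(w^+)$. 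Hence $v=v^+$, a contradiction. Therefore $\phi(S)\subseteq S'$ and $|S|\le |S'|$.

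Combining the two inequalities gives $|S|=|S'|$, and then $\phi$ is a bijection of $S$ onto $S'$. Finally, given $v'\in S'$, write $v'=v^+$ with $v\in S$; then $v\to v'$, so $v\in Z^-(v')$, while Corollary~\ref{outlierofv+} gives $o(v')=o(v^+)=v$, so $o(v')\in Z^-(v')$, as claimed. The only step requiring genuine work is the inequality $d^-(v^+)\ge 2$, and I expect it to be the main obstacle: the trick there is that $v$ and $v^+$ must occupy two different positions in the Moore tree rooted at a common undirected neighbour $w$ of $v$, each of which Lemma~\ref{lem:Mooretrees} identifies with the single outlier $o(w^+)$ — it is the availability of \emph{both} clauses of Lemma~\ref{lem:Mooretrees}, one for $v\in S$ at depth $1$ and one for $v^+\notin S'$ at depth $2$, that forces the contradiction, so this step is also the place where the hypothesis $k\ge 3$ is genuinely used.
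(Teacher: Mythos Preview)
Your argument is correct, but it follows a genuinely different path from the paper's. The paper fixes an arbitrary $v'\in S'$, draws the Moore tree rooted at $v'$, and observes that by $k$-geodecity each of the $r+1$ branches can contain at most one element of $N^-(v')$; since $|N^-(v')|\ge r+2$, one in-neighbour must be the outlier $o(v')$, and it must be a \emph{directed} in-neighbour (the undirected ones sit at depth~$1$). This simultaneously forces $d^-(v')=2$ and $o(v')\in Z^-(v')$; the equality $|S|=|S'|$ then drops out of the in-degree count.

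You instead sandwich $|S|=|S'|$ between the easy counting inequality $|S'|\le|S|$ and an injection $\phi\colon S\to S'$, $v\mapsto v^+$, and read off $o(v')\in Z^-(v')$ from the resulting bijection together with Corollary~\ref{outlierofv+}. The substantive step --- that $v^+\in S'$ whenever $v\in S$ --- is established exactly as the paper does it, but one lemma later (Lemma~\ref{lem:Sindependent}): pick $w\in U(v)$ and apply both clauses of Lemma~\ref{lem:Mooretrees} to the tree rooted at $w$ to force $v=o(w^+)=v^+$. So you have effectively permuted the logical order, importing part of Lemma~\ref{lem:Sindependent} into the proof of Lemma~\ref{lem:excessoneklarge}. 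There is no circularity, since your argument rests only on Lemma~\ref{lem:Mooretrees} and Corollary~\ref{outlierofv+}. The paper's route is shorter and more self-contained for this particular lemma, while yours has the side benefit of exhibiting the explicit bijection $S\to S'$ and (implicitly, from equality in your displayed sum) the fact $d^-(v')=2$, both of which are used downstream.
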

\begin{proof}
Every vertex in $S$ has directed in-degree zero.  Let $v'$ be a vertex in $S'$ and consider the Moore tree of depth $k$ rooted at $v'$.  By $k$-geodecity, none of the $r+1$ branches of the tree can contain more than one in-neighbour of $v'$, so it follows that $d^-(v') = 2$ and $o(v')$ lies in $Z^-(v')$.  As the average directed in-degree of $G$ is one, $S$ and $S'$ must have the same size.
\end{proof}

\begin{lemma}\label{lem:Sindependent}
$S$ is an independent set.  For each $v \in S$ we have $v^+ \in S'$.  
\end{lemma}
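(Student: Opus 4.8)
The plan is to obtain both assertions purely from the Moore‑tree machinery of Lemma~\ref{lem:Mooretrees}, together with Corollary~\ref{outlierofv+} and Lemma~\ref{lem:excessoneklarge}. I would first dispatch the independence of $S$. Since every vertex of $S$ has directed in‑degree $0$, no arc can have both endpoints in $S$, so it suffices to forbid an edge inside $S$. If $v_1\sim v_2$ with $v_1,v_2\in S$, then in the Moore tree of depth $k$ rooted at $v_1$ the vertex $v_2$ occurs at depth $1$ as the endpoint of the edge $v_1\sim v_2$; since $k\geq 3$ we have $d(v_1,v_2)=1\leq k-1$, this path does not begin with an arc, and $v_2\in S$, so Lemma~\ref{lem:Mooretrees}(i) forces $v_2=o(v_1^+)$. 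But Corollary~\ref{outlierofv+} gives $o(v_1^+)=v_1$, so $v_1=v_2$, contradicting the absence of loops.

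For the second statement I would fix $v\in S$ and suppose, for contradiction, that $v^+\notin S'$. Since $v\to v^+$ we have $d^-(v^+)\geq 1$, so $v^+\notin S$ and in fact $d^-(v^+)=z=1$. As $G$ is proper and out‑regular, $r\geq 1$; choose $x\in U(v)$. In the Moore tree rooted at $x$, the vertex $v$ sits at depth $1$ and $v^+$ at depth $2$ as the endpoint of the arc $v\to v^+$ (note $v^+\neq x$, since an edge $x\sim v$ together with an arc $v\to v^+=x$ would be a forbidden double adjacency); hence $d(x,v^+)=2\leq k-1$, the path $x\sim v\to v^+$ does not begin with an arc, $v^+$ occurs as the endpoint of an arc, and $v^+\notin S'$, so Lemma~\ref{lem:Mooretrees}(ii) yields $o(x^+)=v^+$. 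Running the same lemma in the tree rooted at $v$, where $x$ is at depth $1$ and $x^+$ at depth $2$ as the endpoint of the arc $x\to x^+$ (and $x^+\neq v$, since $x\to x^+=v$ would give $v$ an in‑arc), I get: if $x^+\notin S'$ then $x^+=o(v^+)$, which equals $v$ by Corollary~\ref{outlierofv+}, impossible; hence $x^+\in S'$. Then Lemma~\ref{lem:excessoneklarge} gives $o(x^+)\in Z^-(x^+)$, i.e. $(o(x^+))^+=x^+$, which together with $o(x^+)=v^+$ forces $v^{++}=x^+$.

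This identity holds for every $x\in U(v)$. If $r\geq 2$, taking distinct $x_1,x_2\in U(v)$ gives $x_1^+=v^{++}=x_2^+$, so $v\sim x_1\to x_1^+$ and $v\sim x_2\to x_2^+$ are two distinct walks of length $2\leq k$ from $v$, contradicting $k$-geodecity; this settles the case $r\geq 2$. For $r=1$ I would fall back on parity: $G^U$ is then a perfect matching, so $|V(G)|=M(r,1,k)+1$ must be even, which is already impossible for the congruence classes of $k$ making that number odd, with the remaining classes requiring a short additional argument (for instance a direct analysis of the in‑arcs at $v^{++}$ and at the other vertices of $S\cup S'$, using $|S|=|S'|$).

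Part one is essentially immediate once Lemma~\ref{lem:Mooretrees}(i) and Corollary~\ref{outlierofv+} are available. The substance of the lemma — and the step I expect to absorb most of the effort — is turning the hypothesis $v^+\notin S'$ into a contradiction: the key device is to run Lemma~\ref{lem:Mooretrees}(ii) in two directions (in a tree rooted at an undirected neighbour $x$ of $v$, to get $o(x^+)=v^+$, and in the tree rooted at $v$, to get $x^+\in S'$) and then play this against $k$-geodecity. The most fragile point is the low‑degree case $r=1$, where only one undirected neighbour of $v$ is available and the clean two‑walk contradiction collapses, so that one is forced back onto parity of the order and a hands‑on analysis of the few vertices of $S\cup S'$.
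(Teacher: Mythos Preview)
Your independence argument is correct and matches the paper. For the second assertion, however, you have overcomplicated matters and left a genuine gap at $r=1$: your parity remark does not dispose of all $k$ (for instance $M(1,1,3)+1=12$ and $M(1,1,4)+1=20$ are even), and the promised ``short additional argument'' is not supplied.

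The fix is already present in your own set-up, and it is exactly what the paper does. In the Moore tree rooted at $x\in U(v)$ you correctly obtain $o(x^+)=v^+$ from Lemma~\ref{lem:Mooretrees}(ii). But in that \emph{same} tree, $v$ sits at depth~$1$ via the edge $x\sim v$, the path does not begin with an arc, and $v\in S$; so Lemma~\ref{lem:Mooretrees}(i) gives $o(x^+)=v$ as well. Hence $v=v^+$, a loop, which is the desired contradiction --- valid for every $r\geq 1$. Your detour through $x^+\in S'$, Lemma~\ref{lem:excessoneklarge}, and the identity $v^{++}=x^+$ is unnecessary (though correct for $r\geq 2$); the one-line observation you missed is that part~(i) applies to $v$ itself in the tree rooted at $x$.
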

\begin{proof}
By definition $S$ contains no arc.  Suppose that $v \sim w$, where $v, w \in S$.  Draw the Moore tree of depth $k$ rooted at $v$.  By Corollary~\ref{outlierofv+}, $o(v^+) = v$.  But as $w \in S$ is at distance $\leq k-1$ from $u$ and lies in an undirected branch, Lemma~\ref{lem:Mooretrees} implies that $w$ would also be an outlier of $v^+$, so that $v = w$, which is impossible.  Suppose now that $v \rightarrow w$, where $v \in S$ and $d^-(w) = 1$.  Let $u \in U(v)$, so that there is a path $u \sim v \to w$.  Applying Lemma~\ref{lem:Mooretrees} to $u$ shows that both $v$ and $w$ would be outliers of $u^+$, again a contradiction.  Thus $v^+ \in S'$ for all $v \in S$.    
\end{proof}

These lemmas allow us to readily deduce the total regularity of $(r,1,k;+1)$-graphs for $k \geq 4$.

\begin{theorem}\label{thm:totregkgeq4}
$(r,1,k;+1)$-graphs are totally regular for $k \geq 4$. 
\end{theorem}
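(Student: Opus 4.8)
Suppose $G$ is an $(r,1,k;+1)$-graph with $k \geq 4$ that is not totally regular, so $S \neq \varnothing$. The plan is to derive a contradiction with $k$-geodecity by examining the very short end of a Moore tree, exactly as was done in the $k \geq 5$ case of the $(1,1,k;-1)$ theorem, but now exploiting Corollary~\ref{outlierofv+} and Lemma~\ref{lem:Sindependent} to pin down where outliers must sit. First I would pick $v \in S$ and $v' = v^+$; by Lemma~\ref{lem:Sindependent} we have $v' \in S'$, and by Lemma~\ref{lem:excessoneklarge} the outlier $o(v')$ lies in $Z^-(v')$, i.e. $o(v')$ is one of the two directed in-neighbours of $v'$, the other being $v$ itself (since $v \to v'$). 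But $v \in S$ and by Corollary~\ref{outlierofv+} the outlier of $v^+ = v'$ is already determined elsewhere — wait, that is $o(v')$ itself we are discussing. The cleaner route: $o(v') \in Z^-(v')$, and $v$ is one element of $Z^-(v')$, so either $o(v') = v$ or $o(v')$ is a second directed in-neighbour of $v'$.

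The heart of the argument is to apply Lemma~\ref{lem:Mooretrees} to a well-chosen root and force two outliers of the same vertex. Concretely, I would draw the Moore tree of depth $k$ rooted at $v$. Since $v \in S$, all in-neighbours of $v$ lie in undirected branches near the root; label the tree so that $v_1$ is the undirected neighbour $v^*$, $v_2 = v^+ = v'$, and then $v_3, v_4, v_5$ are the three children of $v_1$ (one undirected neighbour, two arcs, since $r \geq 1$ — more precisely the undirected branch has depth $k-1 \geq 3$, giving plenty of room). The key vertices to track are the arc-endpoints at levels $2$ and $3$ of the undirected branches: each such vertex $w$ has a directed in-neighbour and its undirected neighbour already appearing in the tree above it, so if $w \notin S'$ then Lemma~\ref{lem:Mooretrees}(ii) forces $w = o(v')$ — but $o(v')$ is unique. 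With $k \geq 4$ there are at least two such arc-endpoints $w, w'$ at distance $\leq k-1$ from $v$ lying in undirected branches (for instance $v_1 \to (\text{arc child})$ and one more at the next level down), and they cannot both equal the single outlier $o(v')$ unless at least one of them lies in $S'$; but $|S'| = |S|$ is small and a direct count of how many such arc-endpoints appear will exceed $|S'|$, giving the contradiction. The cases $k = 4, 5$ may need to be dispatched by the cruder order-counting bound (initial points of mixed paths of length $\leq k$ to $v$, assuming $S' \subseteq N^-(v)$) that was used in the analogous almost-Moore proof, showing $|V(G)|$ falls short of $M(r,1,k)+1$.

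The main obstacle will be bookkeeping the undirected branches when $r \geq 2$: unlike the $r = z = 1$ case, the tree branches rapidly, so I must be careful to identify a fixed, small family of arc-endpoints $w$ — independent of $r$ — that are guaranteed to lie in undirected branches at depth $\leq k-1$ and to apply Lemma~\ref{lem:Mooretrees} to each, then argue that the constraint ``each such $w$ is either in $S'$ or equal to $o(v')$'' together with $|S'| = |S|$ and $k$-geodecity (which forces the relevant $w$'s to be distinct) is unsatisfiable. I expect that tracking just the two arc-children of $v^* = v_1$ and the arc-children one level further down, plus the observation from Lemma~\ref{lem:Sindependent} that $S$ is independent and each $v \in S$ has $v^+ \in S'$, is enough: it yields at least three distinct vertices each of which must be $o(v')$ or in $S'$, forcing $|S'| \geq 2$ and then iterating (replace $v$ by a second element of $S$, or push one level deeper, available since $k \geq 4$) to exceed $|S'|$. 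If the bound is tight at $k = 4$ I would simply finish that value by the explicit order count, mirroring the treatment of small $k$ in Section~3.
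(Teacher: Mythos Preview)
Your proposal has a genuine gap: the entire strategy rests on the claim that the number of arc-endpoints you can force into $S'$ will ``exceed $|S'|$'', but nowhere in the setup is there an upper bound on $|S| = |S'|$. Lemma~\ref{lem:excessoneklarge} only gives $|S| = |S'|$; it does not bound either set. So rooting at $v \in S$ and observing that many arc-endpoints in the undirected branches lie in $S'$ (which is correct, since each such $w$ at depth $\leq k-1$ satisfies $w = o(v^+) = v$ or $w \in S'$, and $w \neq v$ by geodecity) produces no contradiction whatsoever: $S'$ could simply be large enough to absorb all of them. The proposed ``iteration'' over other elements of $S$ does not help, because the new forced elements of $S'$ may coincide with old ones. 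The fallback order-counting bound for small $k$ is also unmotivated here; unlike the almost-Moore setting, there is no obvious shortfall in the number of vertices reaching $v$.

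The paper's proof avoids this trap by never trying to bound $|S'|$. Instead it first proves the structural fact $U(v) \subseteq S'$ for every $v \in S$, and the key move there is to root the Moore tree not at $v$ but at a vertex $x$ chosen so that $x \sim y \to w$ for a hypothetical $w \in U(v)$ with $d^-(w) = 1$; Lemma~\ref{lem:Mooretrees} then makes both $v$ and $w$ outliers of $x^+$, a contradiction. Once $U(v) \subseteq S'$ holds for all $v \in S$, a double-count of the $S$--$S'$ edges together with $|S| = |S'|$ gives the reverse inclusion $U(v') \subseteq S$ for all $v' \in S'$. Only then does the paper root at $v$: taking $v' \in U(v)$, one gets $(v')^+ \in S'$ at level two, hence $(v')^+$ has an undirected neighbour $w \in S$ at level three, and now condition (i) of Lemma~\ref{lem:Mooretrees} (not condition (ii), which you were focusing on) forces $w = o(v^+) = v$, violating $k$-geodecity. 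The step you are missing is the clever choice of root $x$ to get $U(v) \subseteq S'$, and the subsequent use of part (i) of Lemma~\ref{lem:Mooretrees} to locate an element of $S$ that must coincide with $v$.
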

\begin{proof}
Let $v \in S$.  We know from Lemma~\ref{lem:Sindependent} that $v$ has no out-neighbours in $S$ and that $v^+ \in S'$.  We now show that in fact all out-neighbours of $v$ lie in $S'$. Suppose for a contradiction that there is an edge $v \sim w$, where $d^-(w) = 1$.  Choose a path $x \sim y \to w$.  Applying Lemma~\ref{lem:Mooretrees} to the Moore tree rooted at $x$ shows that $v$ and $w$ would both be outliers of $x^+$, an impossibility.  Hence there can be no such edge and so $U(v) \subseteq S'$ for all $v \in S$. $S$ and $S'$ have the same size by Lemma~\ref{lem:excessoneklarge}, so we also have $U(v') \subseteq S$ for all $v' \in S'$.     

Let $v \in S$ and draw the Moore tree of depth $k$ rooted at $v$.  Recall that $o(v^+) = v$.  Let $v' \in S' \cap U(v)$.  $(v')^+$ appears as the endpoint of an arc in an undirected branch of the tree, so by Lemma~\ref{lem:Mooretrees} $(v')^+$ must be in $S'$, or it would be another outlier of $v^+$.  Hence $U((v')^+) \subseteq S$, so $(v')^+$ has an undirected neighbour $w$ in $S$ at level three of the tree.  This situation is depicted in Figure \ref{fig:Theorem5.5}.  By Lemma~\ref{lem:Mooretrees}, $w$ must be the outlier of $v^+$, so $w = v$, in violation of $k$-geodecity.  Therefore $G$ must be totally regular. 
\end{proof}

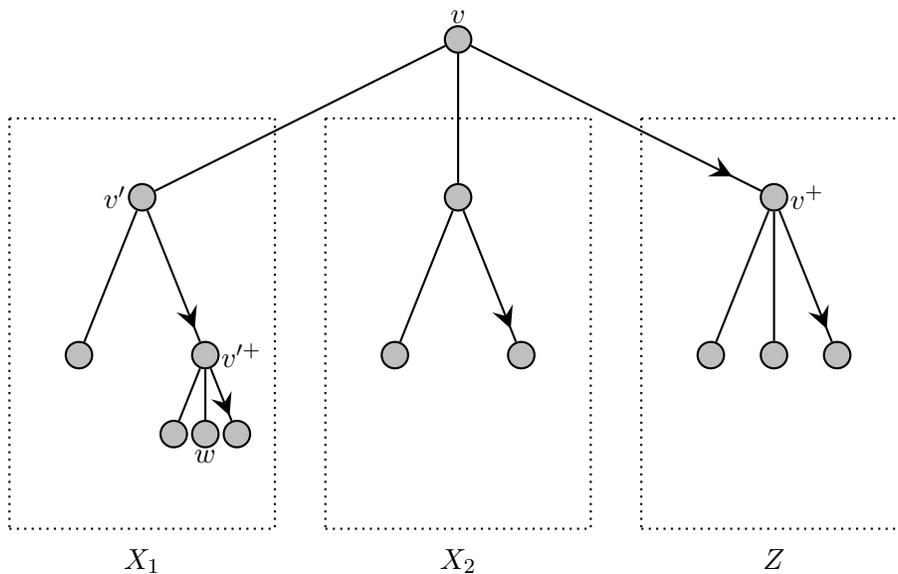
\begin{figure}\centering
	\begin{tikzpicture}[middlearrow=stealth,x=0.2mm,y=-0.2mm,inner sep=0.1mm,scale=2.1,
	thick,vertex/.style={circle,draw,minimum size=10,font=\small,fill=lightgray},every label/.style={font=\small}]
	\node at (200,0) [vertex,label=above:{$v$}] (v0) {};
	
	\node at (100,50) [vertex,label=left:{$v'$}] (v1) {};
	\node at (200,50) [vertex] (v2) {};
	\node at (300,50) [vertex,label=right:{$v^+$}] (v3) {};
	
	\node at (80,100) [vertex] (v9) {};
	\node at (120,100) [vertex,label=right:{$v'^+$}] (v4) {};
	\node at (180,100) [vertex] (v5) {};
	\node at (220,100) [vertex] (v6) {};
	\node at (280,100) [vertex] (v10) {};
	\node at (300,100) [vertex] (v11) {};
	\node at (320,100) [vertex] (v12) {};
	
	\node at (110,125) [vertex] (v13) {};
	\node at (120,125) [vertex,label=below:{$w$}] (v14) {};
	\node at (130,125) [vertex] (v15) {};
	
	\node at (100,160) [label=below:{$X_1$}] {};
	\node at (200,160) [label=below:{$X_2$}] {};
	\node at (300,160) [label=below:{$Z$}] {};
	\path
	(v0) edge (v1)
	(v0) edge (v2)
	(v0) edge [middlearrow] (v3)
	(v1) edge (v9)
	(v1) edge [middlearrow] (v4)
	(v2) edge (v5)
	(v2) edge [middlearrow] (v6)
	(v3) edge (v10)
	(v3) edge (v11)
	(v3) edge [middlearrow] (v12)
	(v4) edge (v13)
	(v4) edge (v14)
	(v4) edge [middlearrow] (v15)
	;
	\draw[dotted] (58,25) -- (58,155);
	\draw[dotted] (58,25) -- (142,25);
	\draw[dotted] (58,155) -- (142,155);
	\draw[dotted] (142,25) -- (142,155);
	
	\draw[dotted] (158,25) -- (158,155);
	\draw[dotted] (158,25) -- (242,25);
	\draw[dotted] (158,155) -- (242,155);
	\draw[dotted] (242,25) -- (242,155);
	
	\draw[dotted] (258,25) -- (258,155);
	\draw[dotted] (258,25) -- (342,25);
	\draw[dotted] (258,155) -- (342,155);
	\draw[dotted] (342,25) -- (342,155);
	
	\end{tikzpicture}
	\caption{Diagram for Theorem~\ref{thm:totregkgeq4} for $k \geq 4$}
	\label{fig:Theorem5.5}
\end{figure}

The case $k = 3$ is more challenging.  For the remainder of this section, assume $G$ to be an $(r,1,3;+1)$-graph that is not totally regular. We first establish an upper bound on the size of the sets $S$ and $S'$.

\begin{lemma}\label{lem:boundS}
	$|S| = |S'| \leq r+2$.
\end{lemma}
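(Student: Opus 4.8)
The equality $|S| = |S'|$ is already contained in Lemma~\ref{lem:excessoneklarge}, so the substance of the proof is the bound $|S'| \leq r+2$, which I would obtain by a double-counting argument on the Moore trees of $G$. First I would record the local facts available so far: since $z = 1$, every $v \in S$ has $Z^-(v) = \varnothing$ and hence $d^-(v) = r$, every $v' \in S'$ has $d^-(v') = r+2$ with $o(v') \in Z^-(v')$ by Lemma~\ref{lem:excessoneklarge}, $S$ is independent and $v^+ \in S'$ for all $v \in S$ by Lemma~\ref{lem:Sindependent}, and $o(v^+) = v$ by Corollary~\ref{outlierofv+}. I would also note that, because $G$ is $3$-geodetic, $G^U$ contains no cycle of length $\leq 6$ (a shorter cycle would furnish two distinct non-backtracking paths of length $\leq 3$ between two of its vertices), so every undirected ball of radius $\leq 3$ in the $r$-regular graph $G^U$ is a tree of the expected size.

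The engine of the argument is Lemma~\ref{lem:Mooretrees}: for each vertex $u$, among the vertices within distance $k-1 = 2$ of $u$ reachable from $u$ by a path not beginning with an arc (together with $u$ itself), at most one lies in $S$, and if one does it equals $o(u^+)$. I would double count the pairs $(u,s)$ with $u \in V(G)$, $s \in S$ and $s$ lying in this ``edge-initial radius-two region'' of $u$. Lemma~\ref{lem:Mooretrees} bounds the number of such pairs above by $|V(G)| = M(r,1,3)+1$. On the other hand, for a fixed $s \in S$ the vertices $u$ whose region contains $s$ are precisely $s$, its undirected neighbours, and the undirected neighbours of those (there is no arc into $s$), a set which the girth bound forces to have exactly $r^2+1$ elements. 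Comparing the two counts gives $|S|(r^2+1) \leq M(r,1,3)+1$, and since $M(r,1,3)+1 = (r+2)(r^2+1) + 3r+3$ this already yields $|S| \leq r+2$ for all sufficiently large $r$. To get the bound uniformly one must sharpen the estimate for the remaining small values of $r$, using the second clause of Lemma~\ref{lem:Mooretrees} — which forces the arc-endpoints appearing in the edge-initial part of each Moore tree into $S'$ — and, for $r$ as small as $1$, the parity argument on $G^U$ already seen in Section~4.

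The main obstacle is precisely this tightness issue: the raw inequality $|S|(r^2+1) \leq M(r,1,3)+1$ has the right shape but a slack of order $3r+3$, so for small $r$ one cannot simply round off, and has to extract the extra savings from the finer structure of the Moore trees (the forced membership of certain arc-endpoints in $S'$, together with $o(v^+) = v$). Getting the backtracking corrections right when enumerating the region of a vertex, and controlling the overlaps between the regions of distinct vertices (which is exactly where the girth-$\geq 7$ property of $G^U$ is used), is the delicate part of the computation.
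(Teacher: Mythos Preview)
Your double-counting argument is correct as far as it goes, but it is genuinely different from the paper's proof and, as you yourself flag, it does not close for small $r$. The paper's approach is much more direct: fix a single $v \in S$ and partition $S$ according to distance from $v$. Using Lemma~\ref{lem:Mooretrees} together with $o(v^+)=v$, one sees that no element of $S$ other than $v$ can lie in an undirected branch of the depth-$3$ Moore tree at $v$ within distance~$2$; a separate look at the Moore tree rooted at $v^+$ shows at most one further element of $S$ can lie in $N^+(v^+)$. Thus at most two elements of $S$ lie at distance $\leq 2$ from $v$. For any $w\in S$ at distance $\geq 3$ from $v$, a pigeonhole on the $r+1$ branches versus the $r$ in-neighbours of $w$ produces an $x\in U(v)$ whose branch misses $N^-(w)$, whence $w=o(x)$; so $\{w\in S: d(v,w)\geq 3\}\subseteq \{o(x):x\in U(v)\}$, contributing at most $r$ more. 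This gives $|S|\leq r+2$ at once, uniformly in $r$, with no case analysis.

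Your route, by contrast, yields $|S|(r^2+1)\leq M(r,1,3)+1=(r+2)(r^2+1)+3r+3$, which only forces $|S|\leq r+2$ once $r\geq 4$; for $r\in\{1,2,3\}$ you are off by one (or more), and this matters downstream, since the paper's next lemma deduces $r\leq 2$ from $2r\leq r+2$ and Theorem~\ref{thm:totreg3} uses the exact value $|S'|=4$ when $r=2$. The sharpening you gesture at---pushing arc-endpoints into $S'$ via clause~(ii) of Lemma~\ref{lem:Mooretrees}---gives lower bounds on $|S'|$, not upper bounds, so it is not clear how it would rescue the inequality. And the parity remark is off: for $k=3$, $r=1$ the order $M(1,1,3)+1=12$ is even, so the Section~4 argument does not transfer. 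In short, your global double count is a reasonable idea but leaves a genuine gap precisely in the range that matters; the paper's local argument (fix $v\in S$, bound nearby and faraway elements of $S$ separately via outliers) is both simpler and tight.
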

\begin{proof}
	Fix $v \in S$.  Drawing a Moore tree rooted at $v^+$ and applying Lemma~\ref{lem:Mooretrees}, we see that at most one element of $S$ is contained in $N^+(v^+)$, for any such vertex in $U(v^+)$ would be an outlier of $(v^+)^+$.  Now considering the Moore tree rooted at $v$, we see from Lemma~\ref{lem:Mooretrees} that any element of $S$ lying in an undirected branch of the tree lying at distance $\leq 2$ from $v$ would be an outlier of $v^+$, whereas we already know from Corollary~\ref{outlierofv+} that $o(v^+) = v$; it follows by $3$-geodecity that there can be at most $2$ vertices of $S$ at distance $\leq 2$ from $v$, including $v$ itself.

Now let $w$ be a vertex of $S$ that lies at distance $\geq 3$ from $v$.  There are more branches in the Moore tree rooted at $v$ than there are in-neighbours of $w$, so there must be an out-neighbour $x$ of $v$ such that the branch of the Moore tree associated with $x$ contains no in-neighbour of $w$, yielding $o(x) = w$.  Since $o(v^+) = v$, we can in fact say that $x \in U(v)$.  Therefore $\{ w \in S : d(v,w) \geq 3\} \subseteq \{ o(x) : x \in S\} $.  It follows that there are at most $r$ vertices of $S$ lying at distance $\geq 3$ from $v$.  In total, there are thus at most $r+2$ vertices in $S$.
\end{proof}

\begin{lemma}\label{thm:r2}
	$r = 2$. 
\end{lemma}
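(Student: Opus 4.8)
The plan is to fix a vertex $v\in S$ (possible since $G$ is not totally regular) with undirected neighbourhood $U(v)=\{w_1,\dots,w_r\}$ and to pin down each out-neighbour $w_i^+$ by combining the Moore-tree dichotomy of Lemma~\ref{lem:Mooretrees} with two preliminary observations. First, by Corollary~\ref{outlierofv+} and Lemma~\ref{lem:Sindependent} the map $x\mapsto x^+$ sends $S$ into $S'$ and satisfies $o(x^+)=x$, so it is injective; since $|S|=|S'|$ by Lemma~\ref{lem:excessoneklarge} it is a bijection $S\to S'$ with inverse $o|_{S'}$. Second, by Lemma~\ref{lem:excessoneklarge} every vertex of $G$ has directed in-degree at most $2$, with in-degree exactly $2$ precisely for the vertices of $S'$.

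The crux is a case split on each $w_i^+$. Applying part (i) of Lemma~\ref{lem:Mooretrees} to the Moore tree rooted at $w_i$, with $w$ taken to be the in-neighbour $v\in S$ of $w_i$, yields $o(w_i^+)=v$; also $w_i^+\ne v$ because $Z^-(v)=\varnothing$. If $w_i^+\in S'$, the bijection above forces $w_i^+=v^+$ (call $w_i$ of \emph{type a}); otherwise, applying part (ii) of Lemma~\ref{lem:Mooretrees} to the tree rooted at $v$ with $w=w_i^+$ — which is reached there by the arc $w_i\to w_i^+$, hence appears as an arc-endpoint unless it already lies within distance $1$ of $v$ — forces $w_i^+\in U(v)$ (call $w_i$ of \emph{type b}). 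Since $Z^-(v^+)$ contains $v$ together with every type-a $w_i$ but has size $d^-(v^+)=2$, at most one $w_i$ is of type a.

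The rest is combinatorial, using $3$-geodecity. If a type-b vertex $w_i$ satisfied $w_i\to w_j$ with $w_j$ also of type b, then $w_j^+\in U(v)$, and $v\sim w_j^+$ and $v\sim w_i\to w_j\to w_j^+$ would be two distinct mixed paths of length $\le 3$, which is impossible; in particular the type-b vertices contain no directed cycle. Hence not every $w_i$ is of type b, so exactly one, say $w_1$, is of type a and every other $w_i$ has $w_i\to w_1$. Thus $d^-(w_1)\ge r-1$, and $d^-(w_1)\le 2$ gives $r\le 3$. If $r=3$, then $w_1\in S'$ with $Z^-(w_1)=\{w_2,w_3\}$, so $o(w_1)\in Z^-(w_1)$ by Lemma~\ref{lem:excessoneklarge}; but $d(w_1,w_i)\le 2$ along the non-backtracking walk $w_1\sim v\sim w_i$, contradicting that an outlier lies at distance $>3$. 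If $r=1$, then the unique neighbour $w_1$ must be of type a, so $w_1^+=v^+$, and $w_1\to v^+$ together with $w_1\sim v\to v^+$ again violates $3$-geodecity. Therefore $r=2$.

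The step I expect to be the main obstacle is the case split in the second paragraph: it requires verifying carefully that the hypotheses of Lemma~\ref{lem:Mooretrees} hold in each application — in particular that the relevant shortest path does not begin with an arc and that $w_i^+$ really occurs as the endpoint of an arc in the Moore tree rooted at $v$ — and handling the degenerate possibilities $d(v,w_i^+)\le 1$ (which either collapse into the type-a case or are ruled out by $Z^-(v)=\varnothing$). Once the dichotomy is established, the geodecity arguments eliminating directed cycles among type-b vertices and the values $r\in\{1,3\}$ are short.
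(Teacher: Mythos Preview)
Your type-a/type-b dichotomy does not stand up. For type~b you claim that when $w_i^+\notin S'$ the hypotheses of Lemma~\ref{lem:Mooretrees}(ii) may fail because $w_i^+$ could already lie within distance~$1$ of $v$, leaving $w_i^+\in U(v)$ as the only outcome. But $v\sim w_i\to w_i^+$ is a non-backtracking mixed walk of length~$2$, so by $3$-geodecity it is the \emph{unique} $\le 3$-path from $v$ to $w_i^+$; in particular $w_i^+\notin N^+(v)\cup\{v\}$, and in the Moore tree rooted at $v$ the vertex $w_i^+$ sits precisely at level~$2$ as the head of the arc $w_i\to w_i^+$. Hence Lemma~\ref{lem:Mooretrees}(ii) always applies and forces $w_i^+\in S'$: the type-b branch is empty. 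The same uniqueness also kills type~a: if $w_i^+=v^+$ then $v\to v^+$ and $v\sim w_i\to v^+$ are two distinct mixed $\le 3$-paths from $v$ to $v^+$. You give exactly this argument to rule out $r=1$, but it applies verbatim to \emph{every} type-a neighbour, so no $w_i$ can be of type~a either. With both cases vacuous, the later combinatorics (no directed cycles among type-b vertices, ``exactly one type-a vertex'', $d^-(w_1)\ge r-1$, the $r=3$ elimination) never gets off the ground.

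The irony is that your opening observations already yield a direct proof of the stronger Theorem~\ref{thm:totreg3}. You correctly deduce $o(w_i^+)=v$ via Lemma~\ref{lem:Mooretrees}(i) at $w_i$; combined with $w_i^+\in S'$ from the previous paragraph and your bijection $+\colon S\to S'$ with inverse $o|_{S'}$, this forces $w_i^+=v^+$, which is instantly contradictory. Thus the existence of even one undirected neighbour of a vertex $v\in S$ is impossible, so no non-totally-regular $(r,1,3;+1)$-graph exists --- bypassing both Lemma~\ref{lem:boundS} and the present lemma. The paper takes a longer route that does not use your bijection at all: it locates $2r$ distinct members of $S'$ in the trees rooted at $v$ and at $v^+$, and combines $|S'|\ge 2r$ with the bound $|S'|\le r+2$ of Lemma~\ref{lem:boundS} to obtain $r\le 2$, handling $r=1$ by a separate argument.
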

\begin{proof}
	Draw the Moore tree of depth $3$ rooted at some $v \in S $, numbering vertices in accordance with our convention.  Suppose that $r = 1$.  As $v_3$ is not the outlier of $v_2 = v^+$, we must have $v_3 \in S'$ by Lemma~\ref{lem:Mooretrees}.  By Lemma~\ref{lem:excessoneklarge}, $o(v_3) \not = v$, for otherwise there would be an arc from $v$ to $v_3$, so that $v_3$ can reach $v$ by a mixed path of length $\leq 3$.  As $v_1$ is the only in-neighbour of $v$, it follows that there would be a $\leq 3$-cycle through $v_1$.  Hence $r \geq 2$.

	There are exactly $r$ vertices that $v$ can reach by paths of length two consisting of an edge followed by an arc, each of which must lie in $S'$, as none are outliers of $v^+$.  Examine the  $r$ vertices that $v^+$ can reach by paths of this form; any of these vertices which do not lie in $S'$ must be an outlier of $(v^+)^+$, so at least $r-1$ of them belong to $S'$.  Along with $v^+$, we have thus identified at least $1+r+(r-1) = 2r$ elements of $S'$ in the tree.  By Lemma~\ref{lem:boundS}, it follows that $2r \leq r+2$.
\end{proof}
\begin{theorem}\label{thm:totreg3}
	$(r,1,3;+1)$-graphs are totally regular.
\end{theorem}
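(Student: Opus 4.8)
The plan is to keep squeezing the structure of $G$ — with $r = 2$ (Lemma~\ref{thm:r2}) and $|S| = |S'| \le 4$ (Lemma~\ref{lem:boundS}) already in hand — until the sizes and local picture become completely rigid, and then force a forbidden configuration. First I would pin the sizes down exactly. The map $v \mapsto v^+$ is an injection $S \to S'$: if $v^+ = w^+$ with $v,w \in S$, then Corollary~\ref{outlierofv+} gives $v = o(v^+) = o(w^+) = w$. Since $|S| = |S'|$ (Lemma~\ref{lem:excessoneklarge}) this is a bijection, so every $v' \in S'$ has exactly two in-neighbours, one of which — namely $o(v')$, the $S$-vertex mapping to it — lies in $S$. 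Next, applying Lemma~\ref{lem:Mooretrees}(ii) to the Moore tree rooted at $v \in S$ shows that every undirected neighbour $u$ of $v$ satisfies $u^+ \in S'$ (otherwise $u^+$ would be a second outlier of $v^+$ alongside $v$); since $S$ is independent, $u \notin S$, hence $u$ is the unique non-$S$ in-neighbour of $u^+$, and therefore $u \mapsto u^+$ is injective on $N_U(S) := \bigcup_{v\in S} U(v)$. Thus $|N_U(S)| \le |S'| = |S|$, while $2$-regularity of $G^U$ together with a count of the edges between $S$ and $N_U(S)$ forces $|N_U(S)| \ge |S|$; equality then forces every vertex of $N_U(S)$ to have both its undirected neighbours in $S$.

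Consequently $S \cup N_U(S)$ induces a $2$-regular bipartite subgraph of $G^U$, i.e.\ a disjoint union of even cycles of total order $2|S| \le 8$. But $3$-geodecity forbids cycles of length $\le 6$ in $G^U$, since two suitably chosen vertices of such a cycle would be joined by two distinct mixed paths of length $\le 3$; hence every such even cycle has length $\ge 8$. Therefore there is exactly one, of length $8$, and $|S| = |S'| = 4$, so $n = M(2,1,3) + 1 = 29$.

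With the parameters fixed, I would exploit the $8$-cycle $C = s_1 t_1 s_2 t_2 s_3 t_3 s_4 t_4$ (with $s_i \in S$, $t_i \in N_U(S)$) together with the bijections above. The depth-$3$ Moore tree rooted at $s_1$ contains all $28$ remaining vertices exactly once, and tracing where the cycle vertices land shows that the $S$-vertex antipodal to $s_1$ on $C$ cannot appear, so it equals $o(s_1)$. Moreover, for each $s_i$ its two cycle-neighbours $t$ satisfy $t^+ \ne s_i^+$ (else $s_i \to s_i^+$ and $s_i \sim t \to t^+$ are two short mixed paths from $s_i$); combined with $S' = \{s_i^+\} = \{t_i^+\}$ this pins down, essentially uniquely, which $s_i$ and $t_j$ share each out-arc target, so the eight out-arcs of $C$ land in $S'$ in a completely determined pattern and are precisely all the in-arcs of $S'$. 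The proof would then be completed by a Moore-tree analysis rooted at a vertex of $S'$ (or at $(v^+)^+$ for $v \in S$): the rigid local structure, together with the way the remaining $21$ vertices must split into undirected cycles of length $\ge 7$, should force either a vertex to occur twice in some depth-$3$ Moore tree or an impossible edge/arc incidence. An alternative finish is to feed $n = 29$, $r = 2$ and this structure into an identity $I + A + A^2 + A^3 = J - O + B$, with $O$ the outlier matrix and $B$ the matrix counting backtracking walks of length $\le 3$, read off the spectrum, and contradict $\trace(A) = 0$ as in Theorem~\ref{main theorem}.

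The main obstacle is precisely this last step. Reducing to $|S| = |S'| = 4$ and the rigid $8$-cycle-plus-arcs picture is essentially forced by the earlier lemmas, but converting that picture into a genuine contradiction requires either delicate global bookkeeping of a depth-$3$ Moore tree over all $29$ vertices — with the cases arising from the cycle decomposition of the other $21$ vertices and the interaction of their arcs with $C$ — or a spectral computation that is considerably messier than the diameter-two case, since $A$ is not symmetric and the backtracking correction $B$ is far from the clean term $2I$ appearing in Theorem~\ref{main theorem}.
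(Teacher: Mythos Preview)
Your approach is essentially the paper's, and in fact you have already reached the contradiction before you start the $8$-cycle analysis --- you just don't notice it. Once you have shown that every $u \in N_U(S)$ has both of its undirected neighbours in $S$, pick any such $u$ and call its two (distinct) neighbours $v,w \in S$. In the Moore tree rooted at $v$, the vertex $w$ sits at level $2$ along the undirected path $v \sim u \sim w$; since $w \in S$ and $k-1 = 2$, Lemma~\ref{lem:Mooretrees}(i) forces $w = o(v^+)$, while Corollary~\ref{outlierofv+} gives $o(v^+) = v$. Hence $w = v$, a contradiction. No $8$-cycle, no bookkeeping over $29$ vertices, no spectral computation is needed.

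This is exactly the paper's argument, in slightly different packaging. The paper observes (by the same pigeonhole you use) that some $v' \in S'$ receives edge--arc paths $v \sim x \to v'$ and $w \sim y \to v'$ from two distinct $v,w \in S$; Lemma~\ref{lem:Mooretrees} rules out $x = y$ for the same reason as above (a common undirected neighbour of $v$ and $w$ would place $w \in S$ at level $2$ and force $w = o(v^+) = v$). Then $x,y$ are two in-neighbours of $v'$, but $v'$ also has its $S$-in-neighbour $s$ with $s^+ = v'$, and $d^-(v') = 2$ forces $s \in \{x,y\}$; since $x \sim v$ (or $y \sim w$) this produces an edge inside $S$, contradicting Lemma~\ref{lem:Sindependent}. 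Your injectivity of $u \mapsto u^+$ on $N_U(S)$ is the same observation as the paper's ``$x \neq y$ and $v'$ has a unique non-$S$ in-neighbour''. So the ``main obstacle'' you flag is not an obstacle at all: the proof is finished one line after your equality $|N_U(S)| = |S|$.
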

\begin{proof}
	By Lemma~\ref{thm:r2}, $r = 2$.  By the argument of the preceding theorem, $|S| =|S'| = 4$ and each element $v$ of  $S$ has four mixed paths of length $\leq 3$ to $S'$, namely an arc to $v^+$, a path of length three via $v^+$ and two paths of length two through the undirected neighbours of $v$.  As $o(v^+) = v$ for $v \in S$, distinct elements of  $S$ have distinct directed out-neighbours in $S'$, so there must be $v,w \in S$ and  $v' \in S'$ such that there are paths $v \sim x \to v'$ and $w \sim y \to v'$ for some vertices $x, y$.  It follows from Lemma~\ref{lem:Mooretrees} that $x \not = y$.  Every vertex of $S'$ has a directed in-neighbour in $S$ and $d^-(v') = 2$, which implies that there is an edge in $S$, contradicting Lemma~\ref{lem:Sindependent}.
\end{proof}

\section{The unique $(2,1,2;+1)$-graph}

In \cite{BusLopMir} it is proven that there is a unique almost mixed Moore graph with diameter $k = 2$ and degree parameters $r = 2, z = 1$.  At the time it was unknown whether such a graph must be totally regular.  Using our total regularity result Theorem~\ref{thm:totreg} we will now prove that there is a unique 2-geodetic mixed graph with the same degree parameters $r = 2, z = 1$ and excess $\epsilon = 1$.  

Let $D_{12} = \langle x,y : x^6 = y^2 = e, yxy^{-1}=x^{-1}\rangle$ be the dihedral group with order 12.  It is easily verified that the Cayley graph on $D_{12}$ with generating set $ \{ x^2, y,xy\} $, where the generator $x^2$ is associated with arcs and the involutions $y, xy$ with edges, is a $(2,1,2;+1)$-graph.  It is displayed in Figure \ref{fig:excessone}.  We will now prove that this $(2,1,2;+1)$-graph is unique up to isomorphism.  Hence let $G$ be an graph with these parameters.

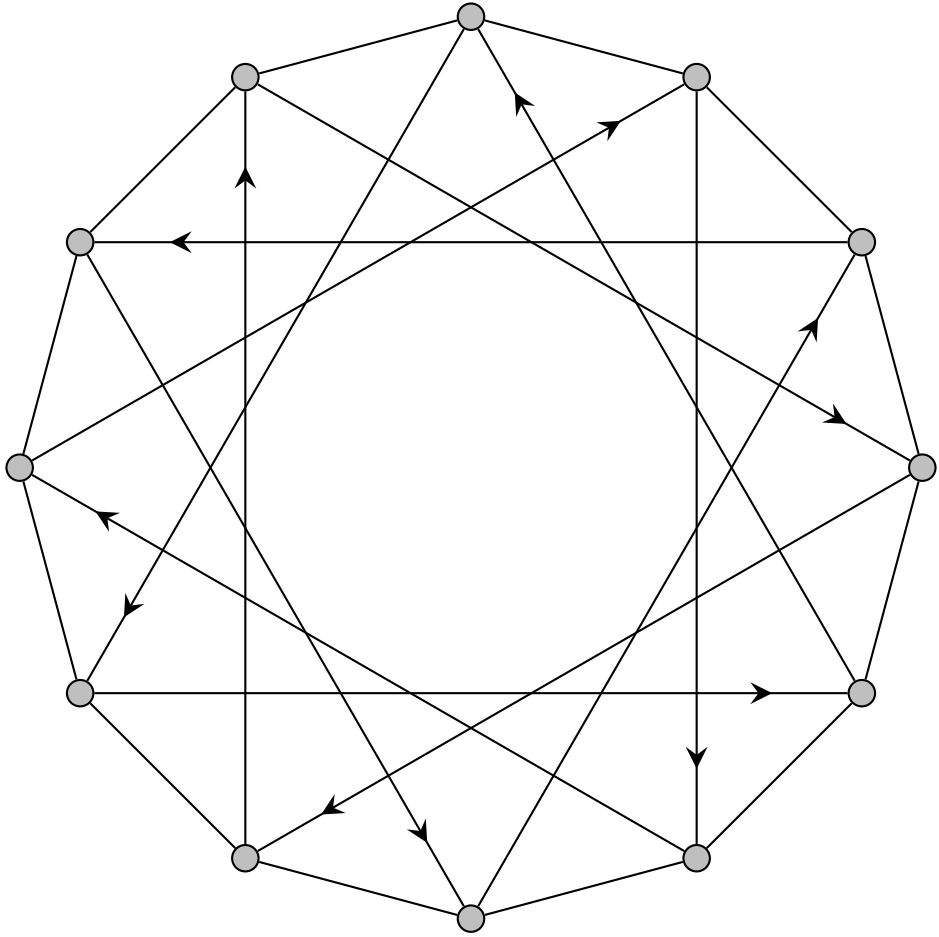
\begin{figure}[h]
	\centering
	\begin{tikzpicture}[middlearrow=stealth,x=0.2mm,y=-0.2mm,inner sep=0.2mm,scale=1,thick,vertex/.style={circle,draw,minimum size=10,fill=lightgray}]
	\node at (300,0) [vertex] (v0) {};
	\node at (259.8,150) [vertex] (v1) {};
	\node at (150,259.8) [vertex] (v2) {};
	\node at (0,300) [vertex] (v3) {};
	\node at (-150,259.8) [vertex] (v4) {};
	\node at (-259.8,150) [vertex] (v5) {};
	\node at (-300,0) [vertex] (v6) {};
	\node at (-259.8,-150) [vertex] (v7) {};
	\node at (-150,-259.8) [vertex] (v8) {};
	\node at (0,-300) [vertex] (v9) {};
	\node at (150,-259.8) [vertex] (v10) {};
	\node at (259.8,-150) [vertex] (v11) {};
	\path
	(v0) edge (v1)
	(v1) edge (v2)
	(v2) edge (v3)
	(v3) edge (v4)
	(v4) edge (v5)
	(v5) edge (v6)
	(v6) edge (v7)
	(v7) edge (v8)
	(v8) edge (v9)
	(v9) edge (v10)
	(v10) edge (v11)
	(v11) edge (v0)
	
	(v0) edge [middlearrow] (v4)
	(v4) edge [middlearrow] (v8)	
	(v8) edge [middlearrow] (v0)
	(v1) edge [middlearrow] (v9)
	(v9) edge [middlearrow] (v5)	
	(v5) edge [middlearrow] (v1)
	(v2) edge [middlearrow] (v6)
	(v6) edge [middlearrow] (v10)	
	(v10) edge [middlearrow] (v2)
	(v3) edge [middlearrow] (v11)
	(v11) edge [middlearrow] (v7)	
	(v7) edge [middlearrow] (v3)
	
	;
	\end{tikzpicture}
	\caption{The unique $2$-geodetic mixed graph with $r= 2, z = 1$ and excess $\epsilon = 1$}
	\label{fig:excessone}
\end{figure}

By Theorem \ref{thm:totreg}, $G^U$ is 2-regular and $G^Z$ is diregular with degree $z = 1$.  By 2-geodecity, $G^U$ cannot contain any cycle of length $\leq 4$.  Hence there are three possibilities: i) $G^U \cong C_5 \cup C_7$, ii) $G^U \cong 2C_6$ and iii) $G^U \cong C_{12}$. 

\begin{lemma}\label{lem:distancegeq4}
If $d_U(u,v) \leq 3$, then $u$ and $v$ are independent in $G^Z$.
\end{lemma}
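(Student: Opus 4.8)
The plan is to suppose for contradiction that there exist vertices $u,v$ with $d_U(u,v)\leq 3$ and an arc joining them in $G^Z$, and then produce two distinct mixed paths of length $\leq 2$ between some pair of vertices, violating $2$-geodecity. Since $G$ is totally regular by Theorem~\ref{thm:totreg}, every vertex has exactly one directed out-neighbour and one directed in-neighbour, and its undirected neighbourhood consists of exactly two vertices; I will use this freely, along with the fact that $G^U$ has girth $\geq 5$ (it is $2$-regular and $2$-geodetic).

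First I would handle the arc direction. By symmetry of the hypothesis $d_U(u,v)\leq 3$ (note $d_U$ is symmetric), we may assume $u\to v$ in $G^Z$. Now split into cases according to $d_U(u,v)\in\{1,2,3\}$. The case $d_U(u,v)=1$ is immediate: $u\sim v$ and $u\to v$ cannot hold simultaneously in a proper mixed graph, so this does not arise; more to the point, we must rule out the configuration where the \emph{undirected} distance is small but $u,v$ are distinct from being undirected-adjacent — so really the relevant cases are $d_U(u,v)=2$ and $d_U(u,v)=3$. If $d_U(u,v)=2$, pick the undirected path $u\sim t\sim v$; then $u\to v$ together with $v\sim t$ gives a mixed path $u\to v\sim t$ of length $2$, while $u\sim t$ is a mixed path of length $1$ from $u$ to $t$ — two distinct mixed paths from $u$ to $t$ of length $\leq 2$, contradicting $2$-geodecity. (I should double-check the non-backtracking condition: $u\to v\sim t$ contains no sub-walk of the form $p\sim q\sim p$, so it is a legitimate mixed path.) If $d_U(u,v)=3$, pick the undirected path $u\sim s\sim t\sim v$; then $u\sim s$ extends by $s\sim t$ to a path $u\sim s\sim t$... but that backtracks only if we returned, which we do not, so $u\sim s\sim t$ is a valid mixed path of length $2$ from $u$ to $t$; on the other hand $u\to v$ followed by $v\sim t$ is a mixed path of length $2$ from $u$ to $t$. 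These are distinct (one starts with an edge to $s$, the other with an arc to $v$, and $s\neq v$ since $d_U(u,v)=3$), again contradicting $2$-geodecity.

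The one subtlety I anticipate — and the step I expect to be the main obstacle — is ensuring that in each case the two exhibited walks are genuinely \emph{distinct} mixed paths with the \emph{same} endpoints and both of length $\leq 2$, and that each is non-backtracking; this requires knowing that the various intermediate vertices ($s,t$) are pairwise distinct from $u,v$ and from each other, which follows from $d_U(u,v)$ being exactly the claimed value together with $G^U$ having girth $\geq 5$ (so no short undirected path closes up or self-intersects). Once the distinctness bookkeeping is nailed down, the contradiction with $2$-geodecity is immediate in every case, and since the hypothesis was symmetric in $u$ and $v$ we conclude that no arc can join two vertices at undirected distance $\leq 3$, i.e. such $u,v$ are independent in $G^Z$.
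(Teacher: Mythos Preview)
Your proof is correct and follows essentially the same approach as the paper: a case split on $d_U(u,v)\in\{1,2,3\}$, in each case exhibiting two distinct mixed paths of length $\leq 2$ between a pair of vertices to contradict $2$-geodecity. The only cosmetic difference is the choice of endpoints for the two paths (e.g.\ in the $d_U=3$ case the paper uses the pair $(u_1,v)$ via $u_1\sim u\to v$ and $u_1\sim u_2\sim v$, whereas you use the pair $(u,t)$), and your invocations of total regularity and girth $\geq 5$ are harmless but not actually needed, since vertices on a shortest undirected path are automatically distinct.
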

\begin{proof}
The result is trivial if $d_U(u,v) = 1$.  If $d_U(u,v) = 2$, then any arc between $u$ and $v$ would violate 2-geodecity.  Similarly, if $d_U(u,v) = 3$ and $u \to v$, there is a path $u \sim u_1 \sim u_2 \sim v$, so that there are paths $u_1 \sim u \to v$ and $u_1 \sim u_2 \sim v$, which again is impossible.  
\end{proof}

\begin{lemma}\label{lem:GUcycle}
$G^U \cong C_{12}$.
\end{lemma}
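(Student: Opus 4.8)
The plan is to rule out the cases $G^U\cong C_5\cup C_7$ and $G^U\cong 2C_6$, leaving $G^U\cong C_{12}$ as the only survivor. I use throughout that $G^U$ is $2$-regular and $G^Z$ is $1$-diregular (Theorem~\ref{thm:totreg}), and that by Lemma~\ref{lem:distancegeq4} no arc of $G$ can join two vertices at undirected distance $\leq 3$.

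Suppose first $G^U\cong C_5\cup C_7$. Both components have undirected diameter $\leq 3$, so Lemma~\ref{lem:distancegeq4} forbids any arc with both endpoints inside one component; hence every arc runs between the $C_5$ and the $C_7$. The seven arcs leaving the $C_7$ therefore all land in the $C_5$, so the five vertices of the $C_5$ have total directed in-degree at least $7$, contradicting $1$-diregularity.

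Now suppose $G^U\cong 2C_6$; this is the substantive case. Write the two hexagons as $a_0a_1\cdots a_5$ and $b_0b_1\cdots b_5$ with indices mod $6$. As above, since $C_6$ has diameter $3$ every arc runs between the hexagons, so there are bijections $\sigma,\tau$ of $\mathbb{Z}_6$ with $a_i\to b_{\sigma(i)}$ and $b_j\to a_{\tau(j)}$ (they are bijections because all directed in-degrees are $1$). Apply $2$-geodecity at $a_i$. The vertex $a_{\tau\sigma(i)}$ is reached from $a_i$ by the mixed path $a_i\to b_{\sigma(i)}\to a_{\tau\sigma(i)}$, so it cannot also be reachable along the hexagon, and it cannot equal $a_i$ (a directed digon would make $a_i$ appear twice in its own Moore tree); since the complement of $\{a_{i-2},\dots,a_{i+2}\}$ in the hexagon is just $\{a_{i+3}\}$, this forces $\tau\sigma(i)=i+3$. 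By symmetry at the $b$-vertices, $\sigma\tau(j)=j+3$ for all $j$, and combining the two gives $\sigma(i+3)=\sigma(i)+3$. A second application of $2$-geodecity at $a_i$ forces the five vertices $b_{\sigma(i-1)}$, $b_{\sigma(i)-1}$, $b_{\sigma(i)}$, $b_{\sigma(i)+1}$, $b_{\sigma(i+1)}$, each reached by an explicit mixed path of length $\leq 2$, to be pairwise distinct; this says $\{\sigma(i)-1,\sigma(i)+1\}\cap\{\sigma(i-1),\sigma(i+1)\}=\varnothing$, that is, every consecutive difference $\sigma(i+1)-\sigma(i)$ avoids $\pm1$ modulo $6$. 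It cannot equal $3$ either, since $\sigma(i+1)=\sigma(i)+3=\sigma(i+3)$ would contradict injectivity; hence each consecutive difference lies in $\{2,4\}$. But then $\sigma(i+3)-\sigma(i)$ is a sum of three elements of $\{2,4\}$, so it is $\equiv0,2$ or $4\pmod 6$, never $3$ — contradicting $\sigma(i+3)=\sigma(i)+3$. So $G^U\not\cong 2C_6$, and we conclude $G^U\cong C_{12}$.

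The main obstacle is the $2C_6$ case. The crude arc count that disposes of $C_5\cup C_7$ is unavailable there because $6+6$ is balanced, so one must instead pin down the arc-bijection $\sigma$ by repeated use of $2$-geodecity in the Moore tree and extract the modular contradiction above; the routine points to check are that directed digons are excluded by $2$-geodecity and that the various length-$\leq2$ mixed paths invoked are genuinely non-backtracking and distinct.
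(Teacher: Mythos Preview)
Your proof is correct. For the $C_5\cup C_7$ case you and the paper argue essentially the same thing from opposite ends: the paper says diregularity forces some arc to stay inside $C_7$ and then invokes Lemma~\ref{lem:distancegeq4}; you invoke Lemma~\ref{lem:distancegeq4} first to push all arcs between components and then get the in-degree overflow in $C_5$. For the $2C_6$ case your route is genuinely different. The paper works concretely: it fixes $(0,0)\to(3,1)$ without loss of generality, deduces the forced $4$-cycle $(0,0)\to(3,1)\to(3,0)\to(0,1)\to(0,0)$, then $(1,0)\to(5,1)$ and its $4$-cycle, and finally shows that the vertex $(5,0)$ has no legal out-target. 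You instead encode the cross-arcs by permutations $\sigma,\tau$ of $\mathbb{Z}_6$, extract the global relations $\tau\sigma=\sigma\tau=+3$ and the consecutive-difference constraint $\sigma(i{+}1)-\sigma(i)\in\{2,4\}$, and finish with a parity obstruction to $\sigma(i{+}3)-\sigma(i)=3$. Your argument is more structural and avoids the case-by-case chase; the paper's is more elementary and reaches the contradiction with fewer preliminary deductions. Both are short, and either would be acceptable.
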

\begin{proof}
Suppose that $G^U \cong C_5 \cup C_7$.  By Theorem \ref{thm:totreg}, there must be an arc with both endpoints in $C_7$.  However, these endpoints are at distance at most three in $G^U$, contradicting Lemma~\ref{lem:distancegeq4}.

Now let $G^U \cong 2C_6$.  We shall label the vertices of $G$ $(i,r)$, where $i \in \mathbb{Z}_6$ and $ r= 0,1$, so that $(i,r) \sim (i\pm 1,r)$, where addition is carried out modulo $6$.  By Lemma~\ref{lem:distancegeq4}, any arc must have its initial and terminal points in different cycles and by Theorem~\ref{thm:totreg} arcs that begin at distinct vertices of one cycle have distinct endpoints.  Without loss of generality, $(0,0) \to (3,1)$.  As $(0,0)$ can already reach the vertices $(1,0),(2,0),(5,0)$ and $(4,0)$ by paths in $G^U$ of length $\leq 2$, by 2-geodecity we must have $(3,1) \to (3,0)$.  Continuing in this manner, we deduce the existence of the 4-cycle $(0,0) \to (3,1) \to (3,0) \to (0,1) \to (0,0)$.  Now $(0,0)$ can reach the vertices $(3,1),(2,1)$ and $(4,1)$ by paths of length $\leq 2$, so either $(1,0) \to (1,1)$ or $(1,0) \to (5,1)$; by symmetry, we can set $(1,0) \to (5,1)$.  We now deduce the existence of the cycle $(1,0) \to (5,1) \to (4,0) \to (2,1) \to (1,0)$.  But now $(5,0) \not \to (4,1)$, or we would have $(0,0) \to (3,1) \sim (4,1)$ and $(0,0) \sim (5,0) \to (4,1)$, and likewise $(5,0) \not \to (1,1)$, or else $(4,0) \to (2,1) \sim (1,1)$ and $(4,0) \sim (5,0) \to (1,1)$.  It follows that $G^U$ is a twelve-cycle.
\end{proof}

\begin{theorem}
The $(2,1,2;+1)$-graph in Figure \ref{fig:excessone} is unique up to isomorphism.
\end{theorem}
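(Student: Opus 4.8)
The plan is to leverage the structural facts already in hand. By Lemma~\ref{lem:GUcycle} we know $G^U\cong C_{12}$, so we identify $V(G)$ with $\mathbb{Z}_{12}$ with $i\sim i\pm 1$ throughout (indices mod $12$). By Theorem~\ref{thm:totreg} the digraph $G^Z$ is $1$-diregular, so every vertex $i$ has a unique out-neighbour $i^+$ and the map $i\mapsto i^+$ is a permutation of $\mathbb{Z}_{12}$; by Lemma~\ref{lem:distancegeq4} each arc has undirected distance $\geq 4$, i.e.\ $i^+\equiv i+j\pmod{12}$ for some $j\in\{4,5,6,7,8\}$. Since $\mathrm{Aut}(C_{12})\cong D_{12}$, we may relabel so that a prescribed arc has source $0$ and then apply the reflection $i\mapsto -i$ to arrange $0^+\in\{4,5,6\}$. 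The heart of the proof is to show that, under these normalisations, $2$-geodecity forces the arc set to consist exactly of the four directed triangles $0\to 4\to 8\to 0$, $1\to 9\to 5\to 1$, $2\to 6\to 10\to 2$ and $3\to 11\to 7\to 3$ of Figure~\ref{fig:excessone} (equivalently $i^+=i+4$ for even $i$ and $i^+=i-4$ for odd $i$).

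The only tool required is $2$-geodecity applied vertex by vertex: for each $u$, the $M(2,1,2)=11$ endpoints of the mixed paths of length $\leq 2$ from $u$ --- namely $u$, $u\pm 1$, $u\pm 2$, $u^+$, $(u+1)^+$, $(u-1)^+$, $u^+\pm 1$ and $(u^+)^+$ --- must be distinct, hence must be $11$ of the $12$ vertices, the twelfth being the outlier $o(u)$. Writing this list out for $u=0$ at once eliminates $0^+=6$: the three vertices $1^+$, $11^+$ and $(0^+)^+=6^+$ would have to lie in the $4$-element complement $\{3,4,8,9\}$ of $\{0,1,2,5,6,7,10,11\}$, but $6^+\in\{6\pm 4,\,6\pm 5,\,6+6\}=\{0,1,2,10,11\}$ is disjoint from $\{3,4,8,9\}$. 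The same bookkeeping kills $0^+=5$: it first forces $5^+=9$, and then propagating the forced arcs and re-imposing the $11$-element constraint at a handful of further vertices eventually yields either an arc of undirected distance $<4$ or a repeated entry in some reachability list, contradicting Lemma~\ref{lem:distancegeq4} or $2$-geodecity.

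For the remaining case $0^+=4$ the argument is a finite but lengthy propagation. The constraint at vertex $4$ gives $4^+\in\{8,9\}$, and $4^+=9$ is excluded in the same fashion. With $4^+=8$, the constraint at vertex $8$ forces $8^+=0$, closing the triangle $0\to 4\to 8\to 0$; the constraints at $1$, then $9$, then $5$ force $1^+=9$, $9^+=5$ and $5^+=1$; and the remaining six vertices are then forced into the triangles $\{2,6,10\}$ and $\{3,11,7\}$ with the indicated orientations. At each branch point the rejected alternative produces either a digon in $G^Z$ (hence a closed mixed walk of length $2$) or two distinct mixed paths of length $\leq 2$ between some ordered pair, both impossible. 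Finally one recognises the resulting graph as the Cayley graph of $D_{12}$ in Figure~\ref{fig:excessone}: the involutions $y$ and $xy$ generate $D_{12}$ (indeed $y\cdot xy=x^{-1}$), so the edges form a single $12$-cycle, while the order-$3$ generator $x^2$ splits the vertex set into four directed triangles in exactly the pattern found above; this establishes uniqueness.

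The real difficulty is organisational rather than conceptual: since $2$-geodecity is a global constraint, each time an arc is pinned down one must re-examine the $11$-element reachability list at several vertices simultaneously, and one must be careful to exclude digons and never to admit an arc of undirected distance $3$. A compact table recording, at each stage, the arcs forced so far and the admissible set into which each remaining $i^+$ is confined would make the case analysis transparent; no idea beyond the combinatorics already developed is needed.
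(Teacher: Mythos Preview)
Your plan is the same as the paper's --- identify $G^U$ with $C_{12}$, invoke Lemma~\ref{lem:distancegeq4} to force every arc displacement into $\{\pm 4,\pm 5,6\}$, and use $2$-geodecity to pin down the arcs --- but the organisation differs. The paper does not normalise $0^+$ and then branch; instead it proves two reusable statements: (a) no arc has displacement $6$ (if $0\to 6$ then $6^+$ has no legal target, exactly your argument), and (b) no arc has displacement $\pm 5$. With (a) and (b) in hand, every arc is $i\to i\pm 4$, and the four directed triangles drop out in a few lines.

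Your sketch is correct in outline, but two places are thinner than you indicate. First, in eliminating $0^+=5$ you write that propagation ``eventually yields'' a contradiction; the paper's route here is short but uses a device you do not mention: it determines the directed \emph{in}-neighbour of $0$ (necessarily $8$, since $4,6,7\to 0$ each create a second $\leq 2$-path to $5$), and this, together with $5^+=9$, forces $1^+=7$, a displacement-$6$ arc, which is killed by (a). Second, in the $0^+=4$ branch you assert that the constraint at vertex $1$ forces $1^+=9$; in fact the reachability list at $0$ leaves $1^+\in\{6,7,9\}$, and excluding $6$ and $7$ is precisely the displacement-$5$ and displacement-$6$ arguments again, now at vertex $1$. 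Your remark that ``$4^+=9$ is excluded in the same fashion'' has the same issue: you have only shown $0^+\neq 5$, not that displacement $5$ is impossible in general, so you would have to rerun the argument from scratch at vertex $4$.

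None of this is fatal --- as you say, the missing pieces are bookkeeping --- but proving (a) and (b) once, as the paper does, is what keeps the write-up short and avoids re-deriving the same exclusion at every vertex.
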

\begin{proof}
By Lemma~\ref{lem:GUcycle}, $G^U$ is a 12-cycle.  We will label its vertices by the elements of $\mathbb{Z}_{12}$, so that $i \sim i\pm1$ for $i \in \mathbb{Z}_{12}$, where addition is modulo $12$.  By Lemma~\ref{lem:distancegeq4}, for each $i \in \mathbb{Z}_{12}$ we have $i \to i+r$, where $4 \leq r \leq 8$.  Suppose that for some $i$ we have $i \to i+6$, say $0 \to 6$.  Then $6$ is not adjacent to any vertex in $\{ 3,4,5,6,7,8,9\} $ by Lemma~\ref{lem:distancegeq4}.  Also $0$ can already reach every vertex in $\{ 10,11,0,1,2\} $ by undirected paths of length $\leq 2$, so by 2-geodecity the arc from $6$ also cannot terminate in this set.  Thus $i \not \to i+6$ for all $i \in \mathbb{Z}_{12}$.

Suppose now that there are vertices $u,v$ such that $d_U(u,v)=5$ and $u \to v$; without loss of generality, let $0 \to 5$.  By 2-geodecity, $5 \not   \to 10,11,0,1$ or $2$ and by Lemma~\ref{lem:distancegeq4} $5 \not \to 3,4,5,6,7$ or $8$.  Thus $5 \to 9$.  Consider the vertices that could be the directed in-neighbour of $0$.  By Lemma~\ref{lem:distancegeq4} none of the vertices $9,10,11,1,2$ or $3$ can have an arc to $0$.  The vertices $4,6$ and $7$ can already reach $5$ by undirected paths of length $\leq 2$, so, as $0 \to 5$, none of these vertices has an arc to $0$.  Therefore $8 \to 0$.  Finally we turn to the vertex $1$.  By  Lemma~\ref{lem:distancegeq4} $1$ cannot have an arc to any of $10,11,2,3$ or $4$.  If $1 \to 8$, then $1$ would have two paths $1 \sim 0$ and $1 \to 8 \to 0$ to $0$.  Similarly, if $1 \to 6$ then $1$ would have two paths to $5$.  Thus $1 \to 7$.  However, $7 = 1+6$, contradicting our previous result. 

Therefore for each $i \in \mathbb{Z}_{12}$ we have $i \to i\pm 4$.  By symmetry we can take $0 \to 4$, so that we have the 3-cycle $0 \to 4 \to 8 \to 0$.  We cannot have $1 \to 5$, or there would be two paths from $0$ to $5$ of length two.  Therefore $1 \to 9 \to 5 \to 1$.  Applying the same reasoning to the vertices $2$ and $3$, we deduce that $G^Z$ contains cycles $2 \to 6 \to 10 \to 2$ and $3 \to 11 \to 7 \to 3$.  By Theorem~\ref{thm:totreg} we have accounted for all edges and arcs, so it follows that $G$ is isomorphic to the graph in Figure \ref{fig:excessone}.   
\end{proof}

\section{Conclusion}

In this paper we have settled the long-standing open problem of the total regularity of mixed graphs with diameter two and defect one and $2$-geodetic mixed graphs with excess one.  The usefulness of these results is already demonstrated by the relative simplicity of the proof of the uniqueness of the $(2,1,2;1)$-graph in Figure \ref{fig:excessone}.  We hope that this paper will facilitate the search for graphs with defect or excess one for $k = 2$.  

However, the question of the total regularity of mixed graphs with order close to the Moore bound for larger values of $k$ has proven to be a more difficult problem  and remains largely open.  We conjecture:

\begin{conjecture}
All $(r,z,k;-1)$- and $(r,z,k;+1)$-graphs are totally regular.
\end{conjecture}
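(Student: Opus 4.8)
The plan is to replay, at general $k$, the template that already proves the conjecture for $k = 2$ and for the small-degree cases of Sections~3 and~5. Out-regularity comes for free from a Moore-tree count exactly as in Lemma~\ref{lem:outregular}, so every vertex of an $(r,z,k;\pm1)$-graph $G$ has a well-defined repeat $\rep(u)$ (defect case) or outlier $o(u)$ (excess case), and the sets $S=\{v:d^-(v)<z\}$, $S'=\{v':d^-(v')>z\}$ are defined as before. The programme has three stages: (a) prove the correct generalisations of the fundamental lemmas (Lemmas~\ref{lem:fundamental}, \ref{lem:fundamental2} and~\ref{lem:Mooretrees}) linking $S$, $S'$ and the repeat/outlier functions; (b) run counting arguments over in-degrees to pin down $|S|$, $|S'|$ and the in-degrees of the offending vertices; and (c) finish either by a direct structural or parity contradiction when $r$ or $z$ is small, or by a spectral contradiction in general.

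For stage (a), fix a root $u$ and draw the depth-$k$ Moore tree. For $v\in S$ the $r+z$ branches must between them supply all mixed paths of length $\le k$ to $v$; since $v$ has too few in-neighbours, some branch must carry a repeated in-neighbour, forcing a relation of the shape ``$v$, or $v$ reached via one of a short list of its neighbours, equals a repeat of $u$ or of a near-neighbour of $u$''. Dually, $v'\in S'$ has too many in-neighbours, so some branch doubles up and $v'$ lies in the image of the repeat (resp. outlier) map on a small neighbourhood of $u$. The feature absent at $k=2$ is that one must track the \emph{depth} at which the doubling happens and whether the relevant mixed path begins with an edge or an arc — this is exactly the source of the technicalities in Lemmas~\ref{lem:shortpaths} and~\ref{lem:Mooretrees} — so I would organise the whole stage as an induction on tree level, proving at each level a ``no two short paths'' statement together with the induced constraint on repeats/outliers. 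The payoff should be, as for $k=2$: $|S|=|S'|$, each bounded above by a quantity linear in $r$ and $z$, together with the exact in-degrees $d^-(v)=z-1$ for $v\in S$ and $d^-(v')=z+1$ for $v'\in S'$, and ideally the exact common value of $|S|$, $|S'|$.

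Stage (c) in the general case should be algebraic, mirroring Theorem~\ref{main theorem}. Write $A=B+C$ with $B$ the symmetric edge matrix and $C$ the arc matrix; counting \emph{non-backtracking} walks of length $\le k$ yields a degree-$k$ matrix identity $\Phi_k(B,C)=J+cI+P$, where $\Phi_k$ is the mixed non-backtracking walk polynomial (at $k=2$ it is just $I+A+A^2$, but for $k\ge 3$ backtracking walks of length $\ge 3$ contribute off-diagonally and a genuine correction is needed) and $P$ records repeats/outliers and, by stage (a), has all its nonzero entries confined to a bounded number of rows or columns. Treating $J+P$ as a low-rank perturbation of $\Phi_k(A)$ and applying the matrix-determinant lemma expresses $\det(xI-A)$ as an explicit low-degree factor times a fixed ``Moore'' factor raised to a large power; the closed-walk traces $\trace(A^j)\in\mathbb{Z}$ (with $\trace A=0$, $\trace A^2=2|E|$, and no contribution from digons) then force the multiplicities of the irrational algebraic eigenvalues to balance in pairs, and a sign-and-magnitude estimate on the residual ``large'' eigenvalue contradicts every non-totally-regular parameter vector — exactly the mechanism of Theorem~\ref{main theorem}.

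The hard part, and the reason this is still only a conjecture, lies in stage (c) for $k\ge 3$: the adjacency matrix of a mixed graph need not be normal when the in- and out-degrees disagree, so there is no guarantee that $A$ is diagonalisable or that the eigenvalue bookkeeping can be carried out over $\mathbb{R}$; $\Phi_k$ has irrational coefficients built from $\sqrt{(z+r)^2+2(z-r)+1}$; and there is no analogue of Lemma~\ref{lem:2repeats} forcing the number of distinct repeats down to two, so both the combinatorial case analysis of stage (a) and the integrality argument of stage (c) threaten to blow up as $z$ grows. I expect a genuinely new ingredient will be needed — most plausibly the Hermitian adjacency matrix of a mixed graph, which restores normality and real spectra, combined with a comparison of its walk-generating function with $\Phi_k$, or a purely rank-theoretic argument showing that any failure of total regularity is incompatible with the rigid ``$P$ supported on a few lines'' structure of stage (a). In the meantime the most promising concrete targets are the outstanding low-degree cases — defect one with $z\ge 2$ and small $r$, and excess one with $z\ge 2$ — where the structural information from stages (a) and (b) should already suffice to force a contradiction by hand, just as for $r=z=1$ and for $z=1$.
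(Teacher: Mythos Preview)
The statement you are addressing is a \emph{conjecture} in the paper, not a theorem; it appears in the concluding section precisely because the authors were unable to prove it, and the paper contains no proof of it whatsoever. There is therefore nothing in the paper to compare your proposal against. You seem to be aware of this yourself --- you write ``the reason this is still only a conjecture'' and ``I expect a genuinely new ingredient will be needed'' --- so what you have submitted is not a proof but a research programme.

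As a programme it is broadly sensible: the three-stage template (generalised fundamental lemmas, counting to pin down $|S|$ and $|S'|$, then a spectral or structural finish) is indeed how the paper handles the special cases, and your identification of the obstacles for $k\ge 3$ (non-normality of $A$, the need for the genuine non-backtracking walk polynomial rather than $I+A+\dots+A^k$, and the absence of a bound like Lemma~\ref{lem:2repeats} on the number of repeats) is accurate. One small correction: the non-backtracking walk polynomial $\Phi_k$ has integer coefficients, since it counts walks; it is the closed-form Moore bound $M(r,z,k)$, not $\Phi_k$ itself, that involves the surd $\sqrt{(z+r)^2+2(z-r)+1}$. But none of this changes the bottom line: you have not supplied a proof, the paper does not supply one either, and the conjecture remains open.
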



\end{document}